\newtheorem{al}{Algorithm}
\def\thi{{\theta_\infty}}
\def\b4{${\mathcal B}_{4}$}
\def\g2{${\mathcal G}_{2}$}
\def\gg2{{\mathcal G}_{2}}
\def\mg2{${\mathcal M}_{{\mathcal G}_{2}}$}
\def\mmg2{{\mathcal M}_{{\mathcal G}_{2}}}
\def\hhmg2{\widehat{\mathcal{M}}_{\mathcal{G}_2}}
\def\hmg2{$\widehat{\mathcal{M}}_{\mathcal{G}_2}$}
\def\p6{PVI}
\def\h6{H_{VI}}
\def\mp6{${\mathcal M}_{PVI}$}
\def\mmp6{{\mathcal M}_{PVI}}
\DeclareMathOperator{\Tr}{Tr}
\newcommand{\be}{\begin{equation}}
\newcommand{\eneq}{\end{equation}}
\newcommand{\complessi}{\mathbb C}
\theoremstyle{plain}
\newtheorem{theorem}{Theorem}[section]
\newtheorem{lemma}[theorem]{Lemma}
\newtheorem{corollary}[theorem]{Corollary}
\newtheorem{proposition}[theorem]{Proposition}
\theoremstyle{definition}
\newtheorem{definition}{Definition}[section]
\begin{document}

\title[Finite orbits of the braid group on the Garnier system]{Finite orbits of the pure braid group on the monodromy of the $2$-variable Garnier system}

\author{P. Calligaris and M. Mazzocco}\thanks{Department of Mathematical Sciences,
Loughborough University, Leicestershire, LE11 3TU, United Kingdom; email:
m.mazzocco@lboro.ac.uk}

\maketitle

\begin{abstract}
In this paper we show that the $\text{SL}_{2}(\complessi)$ character variety of  the Riemann sphere $\Sigma_5$ with five boundary components is a $5$--parameter family of affine varieties of dimension $4$. We endow this family of affine varieties with an action of the braid group and classify exceptional finite orbits.  This action represents the nonlinear monodromy of the $2$ variable Garnier system and finite orbits correspond to its algebraic solutions. 
\end{abstract}

\section{Introduction}

The Garnier system ${\mathcal G}_2$ is the  isomonodromy deformation of the following two-dimensional
Fuchsian system:
\begin{equation}
{{\rm d}\over{\rm d}\lambda} \Phi=\left(
\sum_{k=1}^{4}{{\mathcal A}_k\over \lambda-a_k} \right)\Phi,\qquad \lambda\in\complessi,
\label{N1in}
\end{equation}
$a_1,\dots,a_{4},$ being pairwise distinct complex numbers. 
The residue matrices ${\mathcal A}_j$ satisfy the following conditions:
$$
{\rm eigen}\left({\mathcal A}_j\right)=\pm{\theta_j\over2}
\quad\hbox{and}\quad
-\sum_{k=1}^{n+2}{\mathcal A}_k={\mathcal A}_\infty,
$$
where $\theta_j\in\mathbb C$, $j=1,\dots,4$ and we assume
$$
\hbox{for}\,\theta_\infty\neq 0,\quad
{\mathcal A}_\infty:={1\over2}\left(
\begin{array}{cc}
\theta_\infty & 0\\ 0 & -\theta_\infty\\
\end{array}\right),
$$
with $\theta_\infty\in\mathbb C$.

The Riemann-Hilbert correspondence associates to each Fuchsian system (\ref{N1in}) its monodromy representation class, or in other words, a point in 
 the moduli space of rank two linear monodromy representations over the $2$-dimensional sphere $\Sigma_5$ with five boundary components:
\begin{equation*}
\mmg2 := \text{Hom}(\pi_1(\Sigma_5),\text{SL}_{2}(\complessi))/\text{SL}_{2}(\complessi),
\end{equation*}
also called {\it $\text{SL}_{2}(\complessi)$ character variety of $\Sigma_5$.}

After fixing a basis of oriented loops $\gamma_1,\dots,\gamma_4,\gamma_\infty$ for $\pi_1(\Sigma_5)$ such that $\gamma_\infty^{-1} = \gamma_1\cdots\gamma_4$, as in Figure \ref{fig:4cuts}, an equivalence class of an homomorphism in the character variety \mg2 is determined by the five matrices $M_1,\dots,M_4$, $M_\infty\in \text{SL}_{2}(\complessi)$, that are images of $\gamma_1,\dots,\gamma_4,\gamma_\infty$. These matrices must satisfy the relation:
\begin{equation}
M_\infty M_4 M_3 M_2 M_1 = \mathbb{I}.
\label{eq:cyclic}\end{equation}

\begin{figure}
\centerline{%
\includegraphics[width=9cm]{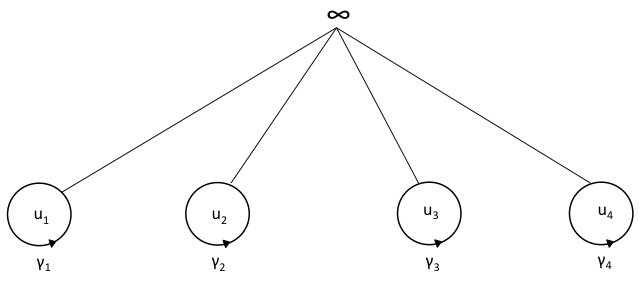}}
\caption{The basis of loops for $\pi_1(\Sigma_5)$.}
\label{fig:4cuts}
\end{figure}
 
In this paper we assume that  $M_\infty$ is diagonalizable:
$$
{\rm eigen}(M_\infty) = 
e^{\pm\pi i {\thi}}
$$
As a consequence the character variety \mg2 is identified with the quotient space \hmg2, defined as:
\begin{equation}
\widehat{\mathcal{M}}_{\mathcal{G}_2} := \left\lbrace (M_1,\dots,M_4)\in \text{SL}_{2}(\complessi) |   {\rm eigen}( M_4 M_3 M_2 M_1) = e^{\pm\pi i {\thi}}
\right\rbrace / \sim,
\label{eq:Mmg2}\end{equation}
where $\sim$ is equivalence up to simultaneous conjugation of $M_1,\dots,M_4$ by a matrix in $\text{SL}_{2}(\complessi)$. 

As the pole positions $a_1,\dots,a_4$ in \eqref{N1in} vary in the configuration space of $4$ points,  the monodromy matrices 
${M}_1,\dots,{M}_{4}$ 
of (\ref{N1in}) remain constant if and only if (see \cite{Bol}) the residue matrices 
${\mathcal A}_1,\dots,{\mathcal A}_{4}$ are solutions of the Schlesinger
equations  \cite{Sch} which in the  $2\times 2$ case reduce 
to the Garnier system ${\mathcal G}_2$  \cite{Gar1,Gar2}. The structure of analytic continuation of the solutions of the Garnier system is described by a certain action of the   pure braid group $P_4$ \cite{MazzoccoPVI} (see also \cite{Cousin2016}) that can be deduced from the following action of the braid group $B_4$:
\begin{equation}
B_4 \times \widehat{\mathcal{M}}_{\mathcal{G}_2} \longmapsto \widehat{\mathcal{M}}_{\mathcal{G}_2},
\label{eq:action}\end{equation}
defined in terms of the following generators:
\begin{equation}
\begin{array}{ll}
\sigma_1 : (M_1,M_2,M_3,M_4) &\mapsto (M_2,M_2M_1M_2^{-1},M_3,M_4),\\
\sigma_2 : (M_1,M_2,M_3,M_4) &\mapsto (M_1,M_3,M_3 M_2M_3^{-1},M_4),\\
\sigma_3 : (M_1,M_2,M_3,M_4) &\mapsto (M_1,M_2,M_4,M_4 M_3M_4^{-1}),\\
\end{array}
\label{eq:braidaction}\end{equation}
so that $M_\infty$ is preserved.

Our aim in this paper is to classify the finite orbits of this action. In our classification we exclude the case when the monodromy group $\langle M_1,\dots,M_4\rangle$ is reducible because in this case the Garnier system can be solved in terms of Lauricella hypergeometric functions \cite{mazzoccoClassical}, and the case in which one of the monodromy matrices is a root of the identity because in this case the Garnier system reduces to the sixth Painlev\'e equation  \cite{mazzoccoClassical} for which all algebraic solutions are classified in \cite{LT}. 
Therefore we restrict to the following open set:
\begin{align}\label{eq:bigopen}\mathcal U = 
\big\{ (M_1,\dots,M_4)& \in\hhmg2 |  \langle M_1,\dots,M_4\rangle \hbox{ irreducible, } \\
 & M_i\neq\pm \mathbb{I}, \forall i=1,\dots,4,\infty\big\} / \sim,\nonumber
\end{align}

To explain our classification result, we firstly identify the open set $\mathcal U$ with an affine variety:
\begin{lemma}\label{lm:u1}
Let the functions $p_i$, $p_{ij}$, $p_{ijk}$ be defined as:
\begin{equation}
\begin{array}{lllll}
&p_{i} &= \Tr M_{i}, \quad   &\quad i=1,\dots,4,\\
&p_{ij} &= \Tr M_{i}M_{j}, \quad   &\quad i,j=1,\dots,4, &\quad i > j,\\
&p_{ijk} &= \Tr M_{i}M_{j}M_{k}, \quad   &\quad i,j,k=1,\dots,4, &\quad i > j > k,\\
&p_\infty &= \Tr M_4 M_3 M_2 M_1,\\
\end{array}
\label{eq:pidef}\end{equation}
then, for every choice of $p_1,\dots,p_4,p_\infty$, the open set of monodromy matrices $\mathcal U$  is isomorphic to a four dimensional affine variety:
\begin{equation}
\mathcal A:=\complessi [p_{21},p_{31},p_{32},p_{41},p_{42},p_{43},p_{321},p_{432},p_{431},p_{421}] / I,
\label{eq:algv}\end{equation}
where $I$ is the ideal generated by the algebraically dependent polynomials $f_1,\dots,f_{15}$ defined in \eqref{eq:f321}-\eqref{eq:f11}. 
\end{lemma}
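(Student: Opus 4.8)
The plan is to use classical invariant theory for the action of $\text{SL}_2(\complessi)$ by simultaneous conjugation on tuples of matrices, following the Fricke–Vogt approach as generalized to $n$-tuples (cf. Procesi, Goldman). First I would recall that the ring of invariants of $m$ matrices in $\text{SL}_2(\complessi)$ under simultaneous conjugation is generated by the traces $\Tr M_{i_1}\cdots M_{i_k}$, and that for $\text{SL}_2$ one may restrict to words of length $\le 3$ with no repeated letters, since longer or repeated words reduce via the Cayley–Hamilton identity $M^2 = (\Tr M) M - \mathbb{I}$ and the fundamental trace identity $\Tr(AB) = \Tr A\,\Tr B - \Tr(AB^{-1})$. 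With $m=4$ matrices this yields exactly the $4 + 6 + 4 = 14$ functions $p_i, p_{ij}, p_{ijk}$ listed in \eqref{eq:pidef} (plus $p_\infty$, which is itself expressible in the others but is kept as a parameter because $M_\infty$ is fixed), so the coordinate ring of the GIT quotient is a quotient of the polynomial ring in these $14$ variables. Fixing the conjugacy classes of $M_1,\dots,M_4,M_\infty$ means fixing $p_1,\dots,p_4,p_\infty$, leaving the $10$ variables $p_{21},p_{31},p_{32},p_{41},p_{42},p_{43},p_{321},p_{432},p_{431},p_{421}$ that appear in \eqref{eq:algv}.

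Next I would establish the defining relations. The key input is that for $m \ge 3$ matrices the generators are not free: there is one relation expressing each "triple-trace squared" — more precisely, for each choice of three indices $i>j>k$ the quantity $p_{ijk}$ satisfies a quadratic equation whose coefficients are polynomials in the $p_i$ and $p_{ij}$ (this is the classical relation for three $\text{SL}_2$ matrices, $p_{ijk}^2 - P\, p_{ijk} + Q = 0$, where $P = \Tr(M_iM_jM_k) + \Tr(M_iM_kM_j)$ and $Q$ are explicit), and additionally there are linear relations expressing any $4$-letter word trace in terms of the shorter ones (the "straightening" identities coming from expanding $\Tr(M_iM_jM_kM_\ell)$). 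With four matrices one gets $\binom{4}{3}=4$ quadratic relations from the triples plus further relations from the $4$-letter words and from compatibility between different triples sharing two indices; collecting and reducing these gives the $15$ polynomials $f_1,\dots,f_{15}$ of \eqref{eq:f321}-\eqref{eq:f11}. I would then verify that $I = (f_1,\dots,f_{15})$ is exactly the kernel of the map from $\complessi[p_{21},\dots,p_{421}]$ onto the invariant ring, i.e. that these relations are not merely necessary but sufficient — this is where one invokes that the invariant ring of $4$ generic $\text{SL}_2$-matrices has Krull dimension $4\cdot 3 - 3 = 9$ before fixing traces, hence dimension $9 - 5 = 4$ after fixing $p_1,\dots,p_4,p_\infty$, matching the claimed four-dimensionality of $\mathcal A$.

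The final step is to check that the GIT quotient actually coincides with the orbit space on the open set $\mathcal U$: since $\mathcal U$ consists of irreducible tuples with no $M_i = \pm\mathbb{I}$, every such tuple has stabilizer exactly the center $\{\pm\mathbb{I}\}$ and its orbit is closed, so by standard GIT the quotient map $\mathcal U \to \mathcal U /\!/\,\text{SL}_2(\complessi)$ is a geometric quotient and a bijection onto its image in $\text{Spec}\,\mathcal A$; one then checks the image is all of the relevant locus (the complement of the reducible/degenerate locus pulls back to a proper closed subset on the $\mathcal A$ side).

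The main obstacle I anticipate is the explicit bookkeeping in the middle step: writing down the $15$ relations $f_1,\dots,f_{15}$ in the right form, showing they generate the full ideal (not just a subideal), and confirming the variety they cut out is irreducible of dimension $4$ — this is a finite but genuinely intricate computation in the $\text{SL}_2$ trace algebra, and the cleanest route is probably to reduce to the known presentation of the character variety of the $5$-holed sphere (equivalently, glue two $4$-holed-sphere "pairs of pants" character varieties along a curve, using the known cubic relation for the $4$-holed sphere) rather than to derive all relations from scratch. The genericity hypothesis "for every choice of $p_1,\dots,p_4,p_\infty$" should be read as: the isomorphism type of $\mathcal A$ as an affine scheme over $\complessi$ depends on these parameters, but the presentation \eqref{eq:algv} holds uniformly, and I would make sure the argument that $I$ is the full relation ideal does not secretly use special values of the parameters.
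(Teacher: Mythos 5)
Your strategy (the Vogt--Procesi generation theorem for tuples of $SL_2(\complessi)$ matrices, GIT separation of closed orbits on the irreducible locus, and the dimension count $4\cdot 3-3-5=4$) is a genuinely different route from the paper's. The paper never invokes abstract invariant theory: it proves the lemma by explicitly solving for the matrix entries in terms of the $p$'s on a cover of $\mathcal U$ (Theorem \ref{lm:coord} on the generic open sets, Lemma \ref{lm:coord1} and Proposition \ref{prop:coord} on the degenerate strata), which delivers injectivity and realizability of the coordinates by direct construction and is reused later in the paper (e.g.\ to compute the monodromy group of orbit $25$); it then obtains the relations $f_1,\dots,f_{15}$ by iterating the skein relation $\Tr AB+\Tr A^{-1}B=\Tr A\,\Tr B$ together with \eqref{eq:cyclic}, and establishes that $\mathcal A$ is four dimensional by a Macaulay2 computation (Theorem \ref{thm:6indep}). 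Your approach buys conceptual uniformity -- closed orbits of irreducible tuples are separated by invariants, so no case-by-case parametrization is needed for injectivity -- while the paper's buys explicit formulas and an effective verification of the presentation.

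The genuine gap is exactly the step you postpone: the lemma is a statement about the \emph{specific} ideal $I=\langle f_1,\dots,f_{15}\rangle$, and your sketch neither derives these fifteen polynomials nor shows that they suffice. Your Krull-dimension count controls the image of the character variety inside $\complessi^{10}$, not the zero locus of the chosen relations: knowing that the invariant ring has dimension $9$ and drops to $4$ after fixing $p_1,\dots,p_4,p_\infty$ does not rule out that $V(I)$ is strictly larger than (the closure of) that image, which is precisely why the paper needs part (ii) of Theorem \ref{thm:6indep} (the Macaulay2 computation that $\complessi[p_{21},\dots,p_{421}]/I$ has dimension four). Saying you would ``verify that $I$ is exactly the kernel'' or ``reduce to the known presentation of the five-holed sphere character variety'' names the missing content rather than supplying it; to complete your route you must either import such a presentation from the literature and match it term by term with \eqref{eq:f321}--\eqref{eq:f11}, or perform the trace-identity and computer-algebra verifications that the paper carries out. (Note also that the paper itself never needs $I$ to be the full relation ideal: the explicit parametrizations of Section \ref{sec:coords} substitute for that, and an argument along your lines would still need some mechanism -- abstract or explicit -- showing which points of $V(I)$ are actually realized by irreducible quadruples.)
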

Therefore we think of $p_1,\dots,p_4,p_\infty$ as a set of parameters and of $p_{ij}$, $p_{ijk}$ as an overdetermined system of coordinates on $\mathcal U$ and we express the action \eqref{eq:action} in terms of $p_i$, $p_{ij}$, $p_{ijk}$ as follows:
\begin{lemma}\label{lm:u2}
The following maps  $\sigma_i :\mathcal A \longrightarrow\mathcal A$, $i=1,2,3$, acting on the coordinates 
\begin{equation}
p:=(p_1,p_2,p_3,p_4,p_{\infty},p_{21},p_{31},p_{32},p_{41},p_{42},p_{43},p_{321},p_{432},p_{431},p_{421}) \in\complessi^{15},
\label{15tuple}\end{equation}
 as follows:
\begin{equation}
\begin{array}{ll}
\sigma_1 : &p \mapsto  (p_2, p_1, p_3, p_4, p_\infty, p_{21}, p_{32}, 
 p_1 p_3 - p_{31} - p_{21} p_{32} + p_2 p_{321}, p_{42},\\ & 
 p_1 p_4 - p_{41} - p_{21} p_{42} + p_2 p_{421}, p_{43}, p_{321}, 
 p_1 p_{43} - p_{431} - p_{21} p_{432} + p_2 p_\infty,\\ & p_{432}, p_{421}),\\
\sigma_2 : &p \mapsto (p_1, p_3, p_2, p_4, p_\infty, p_{31}, 
 p_1 p_2 - p_{21} - p_{31} p_{32} + p_3 p_{321}, p_{32}, p_{41}, p_{43}, \\ &
 p_2 p_4 - p_{42} - p_{32} p_{43} + p_3 p_{432}, p_{321}, p_{432}, 
 p_2 p_{41} - p_{421} - p_{32} p_{431} + p_3 p_\infty,\\ & p_{431}),\\
\sigma_3 : &p \mapsto (p_1, p_2, p_4, p_3, p_\infty, p_{21}, p_{41}, p_{42}, 
 p_1 p_3 - p_{31} - p_{41} p_{43} + p_4 p_{431}, \\ &
 p_2 p_3 - p_{32} - p_{42} p_{43} + p_4 p_{432}, p_{43}, p_{421}, p_{432}, p_{431}, \\ &
 p_{21} p_3 - p_{321} - p_{421} p_{43} + p_4 p_\infty),
\end{array}\label{eq:braidpaction}
\end{equation}
define an action of the braid group $B_4$ on $\mathcal A$.
\end{lemma}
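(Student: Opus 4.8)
The plan is to deduce Lemma~\ref{lm:u2} from the fact that the maps \eqref{eq:braidaction} on monodromy matrices already define an action of $B_4$, rather than verifying the braid relations directly on the fifteen polynomials in \eqref{eq:braidpaction}. First I would record the relevant properties of \eqref{eq:braidaction}: each generator is equivariant under simultaneous $\text{SL}_2(\complessi)$-conjugation of $M_1,\dots,M_4$, hence descends to $\hhmg2$; it is invertible (e.g.\ $\sigma_1^{-1}:(M_1,M_2,M_3,M_4)\mapsto(M_1^{-1}M_2M_1,M_1,M_3,M_4)$); and it preserves the open set $\mathcal U$ of \eqref{eq:bigopen}, because the group $\langle M_1,\dots,M_4\rangle$ is unchanged (each new matrix is a word in the old ones and conversely), each new matrix is a conjugate or a reordering of an old one so none is $\pm\ID$, and $M_\infty^{-1}=M_4M_3M_2M_1$ is fixed so both $M_\infty\neq\pm\ID$ and the eigenvalue constraint survive. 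A short manipulation of words in the free group gives $\sigma_1\sigma_2\sigma_1=\sigma_2\sigma_1\sigma_2$, $\sigma_2\sigma_3\sigma_2=\sigma_3\sigma_2\sigma_3$ and $\sigma_1\sigma_3=\sigma_3\sigma_1$; this is the classical Hurwitz action of $B_4$ (cf.\ \cite{MazzoccoPVI}). Thus \eqref{eq:braidaction} is a genuine $B_4$-action on $\mathcal U$.

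Next I would transport this action through the isomorphism $\mathcal U\cong\mathcal A$ of Lemma~\ref{lm:u1}, whose components are the trace functions \eqref{eq:pidef}. Every entry of $\sigma_i(M_1,\dots,M_4)$ is a word of bounded length in $M_1^{\pm1},\dots,M_4^{\pm1}$, and for $\text{SL}_2(\complessi)$-matrices the trace of any word is a polynomial in the basic traces; hence the transported maps are automatically polynomial in the coordinates $p$, and the task reduces to checking that these polynomials are exactly \eqref{eq:braidpaction}. The only tools needed are $\Tr A=\Tr A^{-1}$ and the Cayley--Hamilton relation $A+A^{-1}=(\Tr A)\ID$ in $\text{SL}_2(\complessi)$, i.e.\ $\Tr(BA^{-1})=(\Tr A)(\Tr B)-\Tr(BA)$. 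For instance, the eighth entry of $\sigma_1(p)$ is $\Tr\big(M_3\cdot M_2M_1M_2^{-1}\big)$, and repeated use of this relation gives
\begin{align*}
\Tr(M_3M_2M_1M_2^{-1}) &= p_2\,\Tr(M_3M_2M_1)-\Tr\big((M_3M_2)(M_1M_2)\big)\\
&= p_2 p_{321}-p_{21}p_{32}+\Tr(M_3M_1^{-1})\\
&= p_1p_3-p_{31}-p_{21}p_{32}+p_2 p_{321},
\end{align*}
which is the claimed value; the thirteenth entry $\Tr(M_4M_3M_2M_1M_2^{-1})$ is identical in spirit, using in addition $\Tr(M_4M_3M_2M_1)=p_\infty$, and the remaining entries — most of which are plain permutations of the $p$'s or are visibly unchanged — are entirely analogous. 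I would carry this out once for each of $\sigma_1,\sigma_2,\sigma_3$.

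Once the formulas \eqref{eq:braidpaction} have been matched with the transported action, the conclusion is immediate: since \eqref{eq:braidaction} is a $B_4$-action on $\mathcal U$ and $\mathcal U\cong\mathcal A$, the maps \eqref{eq:braidpaction} are well-defined on $\mathcal A$ (so they preserve the ideal $I$, equivalently the subvariety of $\complessi^{15}$ cut out by $f_1,\dots,f_{15}$), are invertible, and satisfy the braid relations of $B_4$. I do not expect any conceptual obstacle: the whole content is the lengthy but routine bookkeeping of the second paragraph, which is nothing more than the classical principle that a trace of a word in $\text{SL}_2(\complessi)$-generators is a universal polynomial in their basic traces. As an independent check one could instead verify the three braid relations and the inclusions $\sigma_i(f_j)\in I$ directly as polynomial identities modulo $I$ with a computer algebra system, but the lift above is what explains why the statement is true.
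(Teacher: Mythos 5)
Your proposal is correct, but it proves the lemma by a genuinely different route than the paper. The paper works entirely at the level of the fifteen polynomial coordinates: it first checks that each generator preserves the ideal $I=\langle f_1,\dots,f_{15}\rangle$ by exhibiting explicit identities such as $f_4(\sigma_1(p))=f_3(p)+(p_{21}p_{42}-p_2p_{421})f_6(p)+\dots$, and then verifies the Artin relations \eqref{eq:braidrelations} directly, observing that $\sigma_1\sigma_2\sigma_1=\sigma_2\sigma_1\sigma_2$ and $\sigma_2\sigma_3\sigma_2=\sigma_3\sigma_2\sigma_3$ hold only modulo the ideal, namely because $f_{11},f_{12},f_{13},f_{14}$ vanish on $\mathcal A$. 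You instead lift everything to the matrix level, where \eqref{eq:braidaction} is the Hurwitz action (which satisfies the braid relations exactly on tuples and preserves $\mathcal U$ and $M_\infty$), transport it through the identification $\mathcal U\cong\mathcal A$ of Lemma \ref{lm:u1}, and reduce the whole lemma to the skein-relation bookkeeping that matches the transported maps with the formulas \eqref{eq:braidpaction}; your sample computations (e.g.\ $\Tr(M_3M_2M_1M_2^{-1})$ and the $p_{432}$ entry) are correct and the pattern does extend to all entries. What each approach buys: yours explains conceptually why the statement holds and makes well-definedness and the braid relations automatic, at the price of leaning on Lemma \ref{lm:u1} — in particular on the fact that \emph{every} point of $\mathcal A$ is realized by a monodromy tuple, which in the paper is only secured through the parameterization results of Section \ref{sec:coords} (and Theorem \ref{thm:6indep}(ii) is stated for generic $p_1,\dots,p_4,p_\infty$); the paper's direct verification is independent of that identification and, as a by-product, records the explicit ideal-membership identities $f_j(\sigma_i(p))\in I$, which make transparent exactly which relations $f_k=0$ are needed for the nontrivial braid relations. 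It would strengthen your write-up to note explicitly that the relations \eqref{eq:braidrelations} fail as identities on $\complessi^{15}$ and hold only on $\mathcal A$, which is precisely why the statement must be formulated (and, in your argument, proved) on the variety rather than on the ambient coordinate space.
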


Therefore, our problem is to find all points $p\in\mathcal A$ such that their orbit under the action of the pure braid group $P_4$  induced by the action \eqref{eq:braidpaction} of the braid group $B_4$
is finite. Incidentally, the action \eqref{eq:braidpaction} can also be interpreted as the Mapping Class Group action on the the character variety \mg2.

Our approach is based on the observation that given $p\in\mathcal A$ such that it generates a finite orbit under the action of the pure braid group $P_4$, then for any subgroup $H\subset P_4$ the action of $H$ over $p\in\mathcal A$ must also produce a finite orbit. We select four subgroups $H_1,H_2,H_3,H_4 \subset P_4$  such that the restricted action is isomorphic to the action of the pure braid group $P_3$ on the $SL_2(\complessi)$ character variety of the Riemann sphere with four boundary components \mp6 that can be identified with:
\begin{align}
\widehat{\mathcal{M}}_{PVI} := \left\lbrace (N_1,N_2,N_3)\in \text{SL}_{2}(\complessi) |   \right.& N_\infty N_3 N_2 N_1 = \mathbb{I},\nonumber\\ 
& N_\infty =\exp( i \pi \theta_{\infty}\sigma_3),~\theta_{\infty}\in\complessi \left.\right\rbrace / \sim.
\label{eq:Mmp6}\end{align}

In other words, we show that in order for a point $p\in\mathcal A$ to belong to a finite orbit of the pure braid group $P_4$, it must have $4$ projections on points  $q=(q_1,q_2,q_3,q_{\infty},$ $q_{21},q_{31},q_{32})$ that have a finite orbit under the pure braid group $P_3$. 

We then invert this way of thinking: since all finite orbits of the pure braid group $P_3$ on $q=(q_1,q_2,q_3,q_{\infty},$ $q_{21},q_{31},q_{32})$ have been classified in Lisovyy and Tykhyy's work \cite{LT}, we start from  their list and reconstruct {\it candidate points} $p\in\mathcal A$ that satisfy the necessary conditions to belong to a finite orbit. We then classify all candidate points that indeed produce finite orbits. In order to avoid redundant solutions to this classification problem, we introduce the symmetry group $G$ of the affine variety \eqref{eq:algv} and factorize our classification modulo the action of $G$. The action of the symmetry group $G$ on $\mathcal A$ is calculated in the Appendix using known results about B\"acklund transformations of Schlesinger equations  \cite{MazzoccoCanonical}.
    
In order to produce our candidate points we use the classification result in \cite{LT} that shows that there are $4$ types of finite orbits of the braid group $B_3$:
\begin{enumerate}
\item Fixed points corresponding to Okamoto's Riccati solutions \cite{Ok2}.
\item {\it Kitaev orbits,} corresponding to algebraic solutions obtained by the \textit{pull-back} of the hypergeometric equation (see \cite{MR1911252} and \cite{MR1909248}).
\item  {\it Picard orbits,} corresponding to algebraic solutions obtained in terms of the Weierstrass elliptic function  (see \cite{EmilePicard1889} and \cite{MazzoccoChazy}).
\item $45$ exceptional finite orbits \cite{LT}.
\end{enumerate}

In order to keep down  the number of pages and of technical lemmata, we restrict our classification to {\it exceptional  orbits,} namely orbits for which  the corresponding monodromy group is not reducible, none of the monodromy matrices is a multiple of the identity and 
at most one projection giving either a Kitaev or a Picard orbit is allowed.  Therefore, our classification does not include the solutions found by Tsuda \cite{Tsuda2006657} by calculating fixed points of bi-rational canonical transformations, nor the ones found by 
Diarra in \cite{Diarra2013} using the method of \textit{pull-back} introduced in \cite{MR1909248} and \cite{MR1911252}, nor the families of algebraic solutions obtained by Girand in \cite{MR3499084} by restricting a logarithmic flat connection defined on the complement of a quintic curve on $\mathbb{P}^2$ on generic lines of the projective plane - indeed these algebraic solutions have at least two projections giving Kitaev orbits.
   
Our final classification result consists in a list of $54$ exceptional finite orbits of the action \eqref{eq:braidpaction} obtained up to the action of the group of symmetries $G$ (see Table \ref{tb:classification}).  Interestingly, $53$ of these orbits correspond to finite monodromy groups\footnote{We are grateful to Gael Cousin for asking us this question.} so that our result relates to the problem of classifying  the representations of the $SL_2(\mathbb C)$  character variety of the Riemann sphere with five boundary components on finite groups. One orbit (element $25$ in Table \ref{tb:classification}) corresponds to an infinite monodromy group despite the fact that all of its projections to points corresponding to PVI generate finite monodromy groups.

From the monodromy data $M_1,\dots,M_{4}$, it is in principle possible to recover the explicit formulation of the associated solution of \g2 using the method developed by Lisovyy and Gavrylenko in \cite{2016arXiv160800958G} of Fredholm determinant representation for isomonodromic tau functions of Fuchsian systems of the form \eqref{N1in}. However, the shortest finite orbit classified in our paper has length $36$, for this reason the associated algebraic solution of \g2 has eventually $36$ branches and we doubt that the expression of this solution can have a nice and compact form.

All the algorithms necessary to produce this classification can be found in \cite{ouralgorithms}.

\section{Co-adjoint coordinates on \mg2}\label{sec:coords}

As explained in the introduction, 
we identify the character variety \mg2 with the quotient space \hmg2 defined in \eqref{eq:Mmg2}. Following \cite{P1,P2}, the first step to  endow \hmg2 with a system of co-adjoint coordinates is to introduce a parameterization of the monodromy matrices in terms of their traces and traces of their products. The following result is a generalization of a result proved by Iwasaki for the case of the sixth Painlev\'e equation \cite{iwasakicmp}:

\begin{theorem}\label{lm:coord}
Let $(M_1,\dots,M_4) \in\mathcal U$, $p\in\mathcal A$ as in Lemma \ref{lm:u1} and 
$g(x,y,z) := x^2+y^2+z^2-xyz-4$, then in the open set:
\begin{equation}
\mathcal{U}^{(0)}_{jk} := \hhmg2 \cap \{ (p_{jk}^2-4)g( p_{jk}, p_{l}, p_{jkl} ) \neq 0 \},
\label{eq:openuno}\end{equation}
there exists a global conjugation $P\in SL_2(\mathbb C)$ such that the matrices $M_1,\dots,M_4$ can be parametrized as follows (up  to conjugation by $P$):
\begin{align}
 M_l = \left(
\begin{array}{cc}
 \frac{p_{jkl}-p_{l} \lambda^{-}_{jk}}{r_{jk}} & -\frac{g(p_{jk},p_{l},p_{jkl})}{r_{jk}^2} \\
 1 & -\frac{p_{jkl}-p_{l} \lambda^{+}_{jk}}{r_{jk}} \\
\end{array}
\right),\quad &
 M_k = \left(
\begin{array}{cc}
 -\frac{p_{j}-p_{k} \lambda^{+}_{jk}}{r_{jk}} & -\frac{y_{kl}-y_{jl} \lambda^{-}_{jk}}{r_{jk}^2} \\
 \frac{y_{kl}-y_{jl} \lambda^{+}_{jk}}{g(p_{jk},p_{l},p_{jkl})} & \frac{p_{j}-p_{k} \lambda^{-}_{jk}}{r_{jk}}
\\
\end{array}
\right),\nonumber \\
M_j = \left(
\begin{array}{cc}
 -\frac{p_{k}-p_{j} \lambda^{+}_{jk}}{r_{jk}} & -\frac{y_{jl}-y_{kl} \lambda^{+}_{jk}}{r_{jk}^2} \\
 \frac{y_{jl}-y_{kl} \lambda^{-}_{jk}}{g(p_{jk},p_{l},p_{jkl})} & \frac{p_{k}-p_{j} \lambda^{-}_{jk}}{r_{jk}}
\\
\end{array}
\right),\quad &
M_i = \left(
\begin{array}{cc}
 \frac{p_{ijk}-p_{i} \lambda^{-}_{jk}}{r_{jk}} & -\frac{y_{il} + y_{ijkl} \lambda^{+}_{jk}}{r_{jk}^2}
\\
 \frac{y_{il} + y_{ijkl} \lambda^{-}_{jk}}{g(p_{jk},p_{l},p_{jkl})} & -\frac{p_{ijk}-p_{i} \lambda^{+}_{jk}}{r_{jk}}
\\
\end{array}
\right). \label{eq:paramT}
\end{align}
Alternatively on the open set:
\begin{equation}
\mathcal{U}^{(1)}_{jk} := \hhmg2 \cap \{ (p_{jk}^2-4)g(p_{j},p_{k},p_{jk}) \neq 0 \},
\end{equation}
the matrices $M_1,\dots,M_4$ can be parametrized as follows (up  to conjugation by $P$):
\begin{align}
M_l= \left(
\begin{array}{cc}
 \frac{p_{jkl}-p_{l} \lambda^{-}_{jk}}{r_{jk}} & -\frac{y_{kl}-y_{jl} \lambda^{+}_{jk}}{r_{jk}^2} \\
 \frac{y_{kl}-y_{jl} \lambda^{-}_{jk}}{g(p_{jk},p_{j},p_{k})} & -\frac{p_{jkl}-p_{l} \lambda^{+}_{jk}}{r_{jk}}
\\
\end{array}
\right),\quad &
M_k  = \left(
\begin{array}{cc}
 -\frac{p_{j}-p_{k} \lambda^{+}_{jk}}{r_{jk}} & -\frac{g(p_{jk},p_{j},p_{k})}{r_{jk}^2} \\
 1 & \frac{p_{j}-p_{k} \lambda^{-}_{jk}}{r_{jk}} \\
\end{array}
\right),\nonumber \\
 M_j= \left(
\begin{array}{cc}
 -\frac{p_{k}-p_{j} \lambda^{+}_{jk}}{r_{jk}} & \frac{g(p_{jk},p_{j},p_{k}) \lambda^{+}_{jk}}{r_{jk}^2}
\\
 -\lambda^{-}_{jk} & \frac{p_{k}-p_{j} \lambda^{-}_{jk}}{r_{jk}} \\
\end{array}
\right),\quad &
 M_i = \left(
\begin{array}{cc}
 \frac{p_{ijk}-p_{i} \lambda^{-}_{jk}}{r_{jk}} & -\frac{y_{ik}-y_{ij} \lambda^{+}_{jk}}{r_{jk}^2} \\
 \frac{y_{ik} - y_{ij} \lambda^{-}_{jk}}{g(p_{jk},p_{j},p_{k})} & -\frac{p_{ijk}-p_{i} \lambda^{+}_{jk}}{r_{jk}} \\
\end{array}
\right).
\end{align}
Finally, on the open set:
\begin{equation}
\mathcal{U}^{(2)}_{jk} := \hhmg2 \cap \{ (p_{jk}^2-4)g(p_{jk},p_{i},p_{ijk}) \neq 0 \},
\end{equation}
the matrices $M_1,\dots,M_4$ can be parametrized as follows (up  to conjugation by $P$):
\begin{align}
M_l = \left(
\begin{array}{cc}
 \frac{p_{jkl}-p_{l} \lambda^{-}_{jk}}{r_{jk}} & -\frac{ y_{il} + y_{ijkl} \lambda^{-}_{jk}}{r_{jk}^2}
\\
 \frac{y_{il} + y_{ijkl}
  \lambda^{+}_{jk}}{g(p_{jk},p_{i},p_{ijk})} & -\frac{p_{jkl}-p_{l} \lambda^{+}_{jk}}{r_{jk}}
\\
\end{array}
\right),\quad
M_k = \left(
\begin{array}{cc}
 -\frac{p_{j}-p_{k} \lambda^{+}_{jk}}{r_{jk}} & -\frac{y_{ik}-y_{ij} \lambda^{-}_{jk}}{r_{jk}^2} \\
 \frac{y_{ik}-y_{ij} \lambda^{+}_{jk}}{g(p_{jk},p_{i},p_{ijk})} & \frac{p_{j}-p_{k} \lambda^{-}_{jk}}{r_{jk}}
\\
\end{array}
\right),\nonumber \\
M_j  = \left(
\begin{array}{cc}
 -\frac{p_{k}-p_{j} \lambda^{+}_{jk}}{r_{jk}} & -\frac{y_{ij}-y_{ik} \lambda^{+}_{jk}}{r_{jk}^2} \\
 \frac{y_{ij}-y_{ik} \lambda^{-}_{jk}}{g(p_{jk},p_{i},p_{ijk})} & \frac{p_{k}-p_{j} \lambda^{-}_{jk}}{r_{jk}}
\\
\end{array}
\right),\quad
M_i = \left(
\begin{array}{cc}
 \frac{p_{ijk}-p_{i} \lambda^{-}_{jk}}{r_{jk}} & -\frac{g(p_{jk},p_{i},p_{ijk})}{r_{jk}^2} \\
 1 & -\frac{p_{ijk}-p_{i} \lambda^{+}_{jk}}{r_{jk}} \\
\end{array}
\right),
\end{align}
where:
\begin{eqnarray}
&& r_{jk}:=\sqrt{p_{jk}^2 - 4},\qquad \lambda_{jk}^{\pm} = {p_{jk} \pm r_{jk}\over 2},\label{eq:lambda-r}\\
&& y_{kl}	:= 2 p_{kl} + p_{jk} p_{jl} - p_{j} p_{jkl} - p_{k} p_{l},\nonumber\\
&& y_{jl}	:= 2 p_{jl} + p_{jk} p_{kl} - p_{k} p_{jkl} - p_{j} p_{l},\label{eq:yil}\\
&& y_{ik}	:= 2 p_{ik} + p_{ij} p_{jk} - p_{j} p_{ijk} - p_{i} p_{k},\label{eq:yik}\\
&& y_{ij}	:= 2 p_{ij} + p_{ik} p_{jk} - p_{k} p_{ijk} - p_{i} p_{j},\label{eq:yij}\\
&& y_{il}	:= 2 p_{il} +p_{ijk} p_{jkl} - p_{jk} p_{ijkl} - p_{i} p_{l},\nonumber\\
&& y_{ijkl}	:= 2 p_{ijkl}-p_{il} p_{jk}-p_{i} p_{jkl}-p_{ijk}p_{l}+p_{i} p_{jk} p_{l}.\label{eq:yijkl}
\end{eqnarray}
\end{theorem}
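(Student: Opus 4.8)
Our strategy will closely follow Iwasaki's argument for the four--punctured sphere \cite{iwasakicmp}, adapted to the five monodromy matrices. Fix an ordered assignment of $(i,j,k,l)$ to a permutation of $(1,2,3,4)$; the three parametrisations all arise from one scheme applied to three different normalisations, so I describe the scheme once. Since $p_{jk}^2-4\neq 0$ on each of the open sets in \eqref{eq:openuno}, the product $M_jM_k\in\mathrm{SL}_2(\complessi)$ has the two distinct eigenvalues $\lambda^{\pm}_{jk}$ of \eqref{eq:lambda-r}, so there is $P_0$ with $P_0^{-1}(M_jM_k)P_0=D:=\mathrm{diag}(\lambda^{+}_{jk},\lambda^{-}_{jk})$, the residual freedom being the diagonal torus and the Weyl element exchanging $(\lambda^{+}_{jk},e_1)\leftrightarrow(\lambda^{-}_{jk},e_2)$. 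On $\mathcal U^{(1)}_{jk}$ I would further conjugate by $\mathrm{diag}(t,t^{-1})$ so that the $(2,1)$ entry of $M_k$ equals $1$; on $\mathcal U^{(0)}_{jk}$ (resp. $\mathcal U^{(2)}_{jk}$) the same normalisation is imposed on $M_l$ (resp. $M_i$) instead. The remaining sign of $t$ acts through the central element $-\ID$ and is harmless, and fixing the branch of $r_{jk}$ in \eqref{eq:lambda-r} removes the Weyl ambiguity; this yields the $P$ of the statement.

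The first thing to check is that this normalisation is legitimate on the stated open set. Writing $D=M_jM_k$, one has $\langle M_j,M_k\rangle=\langle D,M_k\rangle$, and $\langle M_jM_k,M_l\rangle=\langle D,M_l\rangle$, $\langle M_jM_k,M_i\rangle=\langle D,M_i\rangle$; moreover $[D,M_k]=[M_j,M_k]$, so that by the trace identity $g(\Tr X,\Tr Y,\Tr XY)=\Tr(XYX^{-1}Y^{-1})-2$ the conditions $g(p_j,p_k,p_{jk})\neq0$, $g(p_{jk},p_l,p_{jkl})\neq0$, $g(p_{jk},p_i,p_{ijk})\neq0$ say exactly that $\langle D,N\rangle$ is irreducible for $N=M_k$, $M_l$, $M_i$ respectively. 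Since the eigenlines of $D$ are $e_1$ and $e_2$ and $D$ fixes both, irreducibility of $\langle D,N\rangle$ forces $N$ to fix neither $e_1$ nor $e_2$; hence $(N)_{21}\neq0$, so the torus normalisation $(N)_{21}=1$ is attainable, and also $(N)_{12}\neq0$, which I will use below.

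With the frame fixed, every $M_m$ is now determined by the coordinates $p$ and I would read off its entries. Its diagonal entries solve the $2\times2$ linear system $\Tr M_m=p_m$ and $\Tr(M_jM_kM_m)=\Tr(DM_m)=\lambda^{+}_{jk}(M_m)_{11}+\lambda^{-}_{jk}(M_m)_{22}$, whose determinant is $\lambda^{+}_{jk}-\lambda^{-}_{jk}=r_{jk}\neq0$; when $m\in\{j,k\}$ the trace $\Tr(M_jM_kM_m)$ is first reduced to a polynomial in the $p$'s using $N^2=(\Tr N)N-\ID$ and the cyclicity of the trace. The two off--diagonal entries then solve a second $2\times2$ linear system built from two further traces —\,on $\mathcal U^{(1)}_{jk}$ these are $\Tr(M_kM_m)$ and $\Tr(M_jM_m)$, and on $\mathcal U^{(0)}_{jk}$, $\mathcal U^{(2)}_{jk}$ the analogues $\Tr(NM_m)$, $\Tr(DNM_m)$ with $N=M_l$, $M_i$\,—\,each of which reduces (again by cyclicity and $N^2=(\Tr N)N-\ID$) to one of the coordinates $p_{\bullet}$ or to $p_\infty$. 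A short computation shows the determinant of this second system equals, up to a nonzero scalar, $r_{jk}(N)_{12}=-g(\cdot,\cdot,\cdot)/r_{jk}\neq0$ on the relevant chart. Solving and simplifying, the numerators collect into exactly the combinations $y_{kl},y_{jl},y_{ik},y_{ij},y_{il},y_{ijkl}$ of \eqref{eq:yil}--\eqref{eq:yijkl} —\,which is how these are defined\,—\,and the resulting matrices coincide with \eqref{eq:paramT} and its two variants.

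The obstacle here is organisational rather than conceptual: one has to push the computation of the last paragraph uniformly through the three charts, each obtained from the others by cyclically relabelling which of $M_l,M_k,M_i$ is put in companion form, and then match the rational expressions term by term with the closed forms in the statement, keeping track of every $\lambda^{\pm}_{jk}$, $r_{jk}$, $g$ and $y$. Finally I would verify directly, by substitution, that the matrices so produced have determinant $1$ and satisfy $\Tr(M_4M_3M_2M_1)=p_\infty$, so that they represent a point of $\hhmg2$; this is routine once the entries are explicit, and it closes the identification of each open set with the corresponding subset of $\mathcal{A}$ given by Lemma \ref{lm:u1}.
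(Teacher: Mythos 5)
Your proposal is correct and follows essentially the same route as the paper: diagonalize $M_jM_k$ (possible since $p_{jk}\neq\pm2$), use the residual diagonal conjugation to normalize the $(2,1)$ entry of the appropriate matrix ($M_l$, $M_k$ or $M_i$ depending on the chart) to $1$ — which is legitimate because the nonvanishing of the relevant $g(\cdot,\cdot,\cdot)$ forces that entry to be nonzero, the same fact the paper encodes via $u_{12}u_{21}=-g/r_{jk}^2$ — and then solve the $2\times2$ linear trace systems for the diagonal and off-diagonal entries, whose determinants are $r_{jk}$ and $u_{12}u_{21}r_{jk}$ respectively. The only cosmetic differences (justifying the normalization via the commutator/Fricke identity instead of the determinant relation, and treating $M_j$ by a linear system rather than as $\Lambda_{jk}M_k^{-1}$) do not change the substance of the argument.
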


\proof
Consider $(M_1,\dots,M_4)\in\mathcal U$. We only prove the statement for the open subset $\mathcal{U}^{(0)}_{jk}$. For the parametrizations on the open subsets $\mathcal{U}^{(1)}_{jk}$ and $\mathcal{U}^{(2)}_{jk}$ a similar proof applies. Under the hypothesis that $p_{jk}\neq \pm 2$,  there exists a matrix $P\in\text{SL}_{2}(\complessi)$ such that the product matrix $M_j M_k$ can be brought into diagonal form:
\begin{equation}\label{eq:lambdakj}
\Lambda_{jk} := P(M_j M_k)P^{-1} = \text{diag}\{\lambda_{jk}^{+},\lambda_{jk}^{-} \},
\end{equation}
where the eigenvalues $\lambda_{jk}^{\pm}$  are given in \eqref{eq:lambda-r}, where the positive branch of the square root is chosen. Consequently
we conjugate by $P$ the matrices $M_l,M_k,M_j,M_i$ as follows:
\begin{equation}
P (M_l,M_k,M_j,M_i ) P^{-1} = (U,V,W,T).
\end{equation}

Since, $W = \Lambda_{jk} V^{-1}$, we only need to produce the parametrization of the matrices $U,V,T$. Solving the equations $\Tr U = p_{l}$, $\Tr \Lambda_{jk}U = p_{jkl}$, and $\Tr V = p_{k}$, $\Tr \Lambda_{jk} V^{-1} = p_j$ and $\Tr T = p_i$ and $\Tr TWV = \Tr T \Lambda_{jk} = p_{ijk}$  we obtain
the diagonal elements of $U$, $V$ and $T$ respectively:
\begin{equation}
u_{11} = \frac{p_{jkl}-p_{l} \lambda^{-}_{jk}}{r_{jk}},\qquad
u_{22} = -\frac{p_{jkl}-p_{l} \lambda^{+}_{jk}}{r_{jk}},
\end{equation} 
\begin{equation}
v_{11} = -\frac{p_{j}-p_{k} \lambda^{+}_{jk}}{r_{jk}},\qquad
v_{22} = \frac{p_{j}-p_{k} \lambda^{-}_{jk}}{r_{jk}},
\end{equation}
\begin{equation}
t_{11} = \frac{p_{ijk}-p_{i} \lambda^{-}_{jk}}{r_{jk}}, \qquad
t_{22} = -\frac{p_{ijk}-p_{i} \lambda^{+}_{jk}}{r_{jk}}.
\end{equation}

We now calculate the off-diagonal elements. Since $\det U = 1$, then the following identity holds: 
\begin{equation}\label{eq:uoffdiag}
u_{12} u_{21} = - {g(p_{jk},p_l,p_{jkl}) \over r_{jk}^2},\\
\end{equation}
and in  $\mathcal{U}^{(0)}_{jk}$ $g(p_{jk},p_l,p_{jkl})\neq 0$. Since $P$ is unique up to left multiplication by a diagonal matrix $D\in \text{SL}_2(\complessi)$, we are allowed to fix $u_{21} = 1$. Then equation 
\eqref{eq:uoffdiag} gives us the element $u_{12}$. 

The system of equations $\Tr VU = p_{kl}$ and $\Tr \Lambda_{jk} V^{-1} U = p_{jl}$ gives us a parametrization for the off-diagonal elements of $V$:
 \begin{equation}
v_{12} = -\frac{y_{ik}-y_{ij} \lambda^{-}_{jk}}{r_{jk}^2},\qquad
v_{21} = \frac{y_{ik}-y_{ij} \lambda^{+}_{jk}}{g(p_{jk},p_{i},p_{ijk})},\\
\end{equation}
where $y_{ik}$ and $y_{ij}$ are defined in \eqref{eq:yik} and \eqref{eq:yij} respectively.
Finally, consider the system of equations $\Tr TU = p_{il}$ and $\Tr TWVU = \Tr T\Lambda_{jk} U = p_{ijkl}$, then we have the following parametrization for $t_{12}$ and $t_{21}$:
 \begin{equation}
t_{12} = -\frac{y_{il} + y_{ijkl} \lambda^{+}_{jk}}{r_{jk}^2}, \qquad
t_{21} = \frac{y_{il} + y_{ijkl} \lambda^{-}_{jk}}{g(p_{jk},p_{l},p_{jkl})},
\end{equation}
where $y_{il}$ and $y_{ijkl}$ are defined in \eqref{eq:yil} and \eqref{eq:yijkl} respectively.
This concludes the proof.
\endproof

Theorem \ref{lm:coord} shows that $(p_1,\dots,p_4,p_{21},\dots,p_{43},p_{321},\dots,p_{421})$ parameterize  the following open subset of $\mathcal U$:
\begin{equation}
\bigcup_{j>k} \mathcal U_{jk}^{(0)}\cup \mathcal U_{jk}^{(1)}\cup  \mathcal U_{jk}^{(2)}\label{eq:bigU}
\end{equation}
We now show that  it is possible to parameterize the monodromy matrices in terms of  
$p\in\mathcal A$ also outside of this open subset.

\begin{lemma}\label{lm:coord1}
Let $(M_1,\dots,M_4) \in \mathcal U$ and $p\in\mathcal A$. Assume that  $p_{jk}\neq\pm 2$ for at least one choice of $j\neq k$, $j,k=1,\dots,4$ and
\begin{equation}\label{eq:chatr-3}
g( p_{jk}, p_{l}, p_{jkl} )=g(p_{j},p_{k},p_{jk})= g(p_{jk},p_{i},p_{ijk})=0,
\end{equation}
where  $g(x,y,z) := x^2+y^2+z^2-xyz-4$,
then there exists at least an index $l$ for which $p_{lk}\neq \lambda_l \lambda_k+\frac{1}{ \lambda_l \lambda_k}$ and  a global conjugation $P\in SL_2(\mathbb C)$ such that:
\begin{eqnarray}
&&
PM_kP^{-1}=\left(\begin{array}{cc}\lambda_k& 1\\ 0 &\frac{1}{\lambda_k}\\ \end{array}
\right), \qquad
PM_jP^{-1}=\left(\begin{array}{cc}\lambda_j& -\lambda_j \lambda_k\\ 0 &\frac{1}{\lambda_j}\\ \end{array}
\right), \label{eq:param-g0-j}\\
&&
PM_lP^{-1}=\left(\begin{array}{cc}\lambda_l& 0\\ p_{lk}-\lambda_l \lambda_k-\frac{1}{ \lambda_l \lambda_k} &\frac{1}{\lambda_l}\\ \end{array}
\right), \label{eq:param-g0-l}\\
&&
PM_iP^{-1}=\left\{\begin{array}{l}\left(\begin{array}{cc}\lambda_i& 0\\ p_{ik}-\lambda_i \lambda_k-\frac{1}{ \lambda_i \lambda_k} &\frac{1}{\lambda_i}\\ \end{array}
\right),\quad\hbox{ for } p_{il}=\lambda_i \lambda_l+\frac{1}{ \lambda_i \lambda_l},\\
\left(\begin{array}{cc}\lambda_i& \frac{p_{il}-\lambda_i \lambda_l-\frac{1}{ \lambda_i \lambda_l}}{p_{lk}-\lambda_l \lambda_k-\frac{1}{ \lambda_l \lambda_k}}
 \\ 0 &\frac{1}{\lambda_i}\\ \end{array}
\right),\quad\hbox{ for } p_{il}\neq \lambda_i \lambda_l+\frac{1}{ \lambda_i \lambda_l},
\end{array}\right. \label{eq:param-g0-i}
\end{eqnarray}
where $\lambda_s+\frac{1}{\lambda_s}=p_s$, $\forall s=1,\dots,4$.
\end{lemma}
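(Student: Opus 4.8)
The plan is to reduce to an essentially elementary normal-form computation, mirroring the proof of Theorem \ref{lm:coord} but adapted to the degenerate stratum where all three quadratic quantities $g(p_{jk},p_l,p_{jkl})$, $g(p_j,p_k,p_{jk})$, $g(p_{jk},p_i,p_{ijk})$ vanish simultaneously. The key structural fact is the following classical observation: for two matrices $A,B\in\mathrm{SL}_2(\complessi)$ one has $g(\Tr A,\Tr B,\Tr AB)=0$ precisely when the pair $(A,B)$ is \emph{reducible}, i.e.\ $A$ and $B$ share a common eigenvector. First I would apply this to the pair $(M_j,M_k)$: the vanishing of $g(p_j,p_k,p_{jk})$ forces $M_j$ and $M_k$ to have a common eigenvector. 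Since $p_{jk}\neq\pm2$ (so $M_jM_k$ is diagonalizable and in particular $M_j,M_k$ are each $\neq\pm\ID$, using $M_i\neq\pm\ID$ from the definition of $\mathcal U$), conjugating by a suitable $P\in\mathrm{SL}_2(\complessi)$ puts this common eigenvector at $(1,0)^T$ and makes both $M_j,M_k$ upper triangular with the prescribed diagonal entries $\lambda_j,1/\lambda_j$ and $\lambda_k,1/\lambda_k$. The residual freedom is left multiplication by a diagonal matrix together with (if $\lambda_j=\lambda_k$ or suitable coincidences) possibly a single unipotent; I would use the diagonal freedom to normalize the $(1,2)$-entry of $M_k$ to $1$ — this is allowed exactly because $M_k\neq\pm\ID$ guarantees its off-diagonal entry is nonzero, so it can be scaled to $1$. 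Then $\det M_j=1$ together with $\Tr M_jM_k=p_{jk}$ and the triangular shape pins down the $(1,2)$-entry of $M_j$: a short computation gives $-\lambda_j\lambda_k$, which is exactly \eqref{eq:param-g0-j}. Note $M_j$ really must be upper triangular and \emph{not} lower triangular here, because if it were diagonal then $g(p_j,p_k,p_{jk})=0$ would force a relation making $M_jM_k$ non-diagonalizable or $p_{jk}=\pm2$, contradiction; this case analysis needs to be spelled out carefully.

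Next I would bring in $M_l$ and $M_i$ using the other two vanishing conditions. The condition $g(p_{jk},p_l,p_{jkl})=0$ says the pair $(M_jM_k,M_l)=(\Lambda_{jk},M_l)$ is reducible, i.e.\ $M_l$ shares an eigenvector with the diagonal matrix $\Lambda_{jk}=\mathrm{diag}(\lambda_{jk}^+,\lambda_{jk}^-)$; since $p_{jk}\neq\pm2$ the two eigenlines of $\Lambda_{jk}$ are the coordinate axes, so $M_l$ is either upper or lower triangular in our frame. If $M_l$ were upper triangular it would share the eigenvector $(1,0)^T$ with $M_j,M_k$, making the whole monodromy group $\langle M_1,\dots,M_4\rangle$ reducible — excluded in $\mathcal U$. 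Hence $M_l$ is \emph{lower} triangular, with diagonal $\lambda_l,1/\lambda_l$; its $(2,1)$-entry is then forced by $\Tr M_lM_k=p_{lk}$ (a one-line computation using the already-fixed shape of $M_k$) to be $p_{lk}-\lambda_l\lambda_k-\frac1{\lambda_l\lambda_k}$, which is \eqref{eq:param-g0-l}. Moreover this entry is nonzero for at least one choice of $l$ among the remaining two indices: if it vanished for both remaining indices then those matrices would also be upper triangular (diagonal, even), again forcing reducibility of the full group — contradiction. This establishes the claimed index $l$ with $p_{lk}\neq\lambda_l\lambda_k+\tfrac1{\lambda_l\lambda_k}$. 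Finally, $M_i$: reducibility of $(\Lambda_{jk},M_i)$ from $g(p_{jk},p_i,p_{ijk})=0$ again makes $M_i$ triangular in our frame, so either upper or lower triangular with diagonal $\lambda_i,1/\lambda_i$. The dichotomy in \eqref{eq:param-g0-i} is exactly the dichotomy ``$M_i$ lower triangular'' versus ``$M_i$ upper triangular'': in the lower-triangular case $\Tr M_iM_k=p_{ik}$ fixes the $(2,1)$-entry as in the $M_l$ formula; in the upper-triangular case one uses $\Tr M_iM_l=p_{il}$ together with the now-known nonzero $(2,1)$-entry of $M_l$ to solve for the $(1,2)$-entry of $M_i$, which produces the quotient appearing in the second line of \eqref{eq:param-g0-i}. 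The discriminating condition between the two cases is whether $p_{il}=\lambda_i\lambda_l+\tfrac1{\lambda_i\lambda_l}$ (which is precisely the trace identity that holds when both $M_i,M_l$ are lower triangular and share the eigenvector $(0,1)^T$, versus not).

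The main obstacle, and the part requiring genuine care rather than routine algebra, is the branching/case analysis: ruling out the ``wrong'' triangular orientations by invoking irreducibility of the full group $\langle M_1,\dots,M_4\rangle$, and correctly identifying when the normalizing conjugation $P$ is uniquely determined versus when extra coincidences (equal eigenvalues, i.e.\ $\lambda_s=\pm1$ which is excluded, or $\lambda_j=\lambda_k$, or collisions among off-diagonal entries) leave residual freedom that must be used up consistently. One must check that the three normalizations are compatible — we use the diagonal gauge freedom exactly once, to set the $(1,2)$-entry of $M_k$ to $1$, and everything else is then determined — and that the hypothesis ``$p_{jk}\neq\pm2$ for at least one choice'' together with \eqref{eq:chatr-3} for \emph{that same} choice of $j,k,l,i$ is what is actually being used. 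I would also need to verify that the resulting matrices genuinely lie in $\mathrm{SL}_2(\complessi)$ with the prescribed traces and trace-of-products, i.e.\ that \eqref{eq:chatr-3} is not only necessary but sufficient for consistency; this is a direct substitution check. Once the case analysis is organized, the computations for the matrix entries are each a single determinant or trace equation solved for one unknown, entirely parallel to the corresponding steps in the proof of Theorem \ref{lm:coord}, so I would present them tersely.
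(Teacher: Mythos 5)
Your overall strategy is the paper's: use the vanishing of the three $g$'s to force all four matrices to be triangular in a common frame, normalize $M_k$, and then use irreducibility plus single trace equations to pin down $M_l$ and $M_i$. However, there is a genuine gap at the normalization step. In your gauge you only arrange that the common eigenvector of $M_j,M_k$ is $(1,0)^T$ and that $(M_k)_{12}=1$; with $M_j=\left(\begin{smallmatrix}\lambda_j&b\\0&1/\lambda_j\end{smallmatrix}\right)$ and $M_k=\left(\begin{smallmatrix}\lambda_k&1\\0&1/\lambda_k\end{smallmatrix}\right)$ one has $\det M_j=1$ and $\Tr M_jM_k=\lambda_j\lambda_k+1/(\lambda_j\lambda_k)=p_{jk}$ identically in $b$, so the equations you invoke do \emph{not} determine $b=-\lambda_j\lambda_k$: that value is equivalent to the extra condition $(M_jM_k)_{12}=\lambda_j+b/\lambda_k=0$, i.e.\ to $M_jM_k$ being diagonal in your frame, which you never impose. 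This matters downstream: your arguments for $M_l$ and $M_i$ use that the eigenlines of $\Lambda_{jk}$ are the two coordinate axes, which holds only if $M_jM_k$ is diagonal; with $M_j,M_k$ merely upper triangular the second eigenline of $M_jM_k$ is in general not $(0,1)^T$, so $g(p_{jk},p_l,p_{jkl})=0$ only gives triangularity of $M_l$ with respect to the wrong basis. Relatedly, your description of the residual gauge is off: conjugation by upper-triangular unipotents always preserves your normal form (not only when $\lambda_j=\lambda_k$), and it is precisely this unipotent freedom that must be spent to diagonalize $M_jM_k$.

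The repair is exactly the paper's order of operations (which you cite but then deviate from): since $p_{jk}\neq\pm2$, first conjugate so that $M_jM_k=\Lambda_{jk}=\mathrm{diag}(\lambda_{jk}^{+},\lambda_{jk}^{-})$; then \eqref{eq:chatr-3} forces $(M_s)_{12}(M_s)_{21}=0$ for every $s$, a permutation plus a diagonal conjugation give $(M_k)_{12}=1$, and $M_j=\Lambda_{jk}M_k^{-1}$ yields \eqref{eq:param-g0-j} with no trace computation at all. Note also that your side remark that a diagonal $M_j$ would force $p_{jk}=\pm2$ or a non-diagonalizable $M_jM_k$ is false: simultaneously diagonal $M_j,M_k$ with $\lambda_j,\lambda_k\neq\pm1$ satisfy $g(p_j,p_k,p_{jk})=0$ with $M_jM_k$ diagonalizable and $p_{jk}$ unconstrained; the paper handles this point simply by assuming, after a permutation conjugation, that $(M_k)_{12}\neq0$. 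Once the frame is fixed correctly, the rest of your argument — irreducibility giving an index $l$ with $(M_l)_{21}=p_{lk}-\lambda_l\lambda_k-\frac{1}{\lambda_l\lambda_k}\neq0$, and the upper/lower dichotomy for $M_i$ governed by whether $p_{il}=\lambda_i\lambda_l+\frac{1}{\lambda_i\lambda_l}$ — coincides with the paper's proof of \eqref{eq:param-g0-l} and \eqref{eq:param-g0-i}.
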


\proof Proceeding as in the proof of Theorem \ref{lm:coord}, we bring the product matrix $M_j M_k$ into the diagonal form. Condition \eqref{eq:chatr-3} implies that
the following equations must be satisfied (we have absorbed the global conjugation $P$ in the matrices $M_1,\dots,M_4$, here):
$$
(M_1)_{12}(M_1)_{21}=(M_2)_{12}(M_2)_{21}=(M_3)_{12}(M_3)_{21}=(M_4)_{12}(M_4)_{21}=0.
$$
By global conjugation by a permutation matrix, we can assume that $(M_k)_{12}\neq 0$ and then by global diagonal conjugation we can put $M_k$ in Jordan normal form. Then, since $M_j= \Lambda_{jk}M_k^{-1}$ we immediately obtain  \eqref{eq:param-g0-j}.
Since the monodromy group must be irreducible, one of the two remaining matrices, call it $M_l$, must have non zero $21$ entry. Then since $\Tr(M_l M_k)=p_{lk}$, we obtain $(M_l)_{21}= p_{lk}-\lambda_l \lambda_k-\frac{1}{ \lambda_l\lambda_k}\neq 0$, and therefore \eqref{eq:param-g0-l}.

Now if the last matrix is also lower triangular, by imposing $\Tr M_i M_k=p_{ik} $, we obtain the first formula in \eqref{eq:param-g0-i}, and it is immediate to check that then
$p_{il}= \lambda_i \lambda_l+\frac{1}{ \lambda_i \lambda_l}$. Otherwise, if $M_i$ is upper triangular, by imposing $\Tr M_i M_l=p_{il} $, we obtain the second formula  \eqref{eq:param-g0-i}, and it is immediate to check that then  $p_{il}\neq \lambda_i \lambda_l+\frac{1}{ \lambda_i \lambda_l}$.
\endproof

\begin{proposition}
\label{prop:coord}
Let $(M_1,\dots,M_4) \in \mathcal U$ and  $p\in\mathcal A$. Assume that  $p_{jk}= 2 \epsilon_{jk}$ for all $j,k=1,\dots,4$, where $\epsilon_{jk}=\pm 1$. Then, if that at least one matrix $M_i$ is diagonalizable there exists a choice of the ordering of  the indices $i,j,k,l\in\{1,2,3,4\}$ such that the following parameterization holds true:
\begin{equation}\label{pij2-param-i}
PM_iP^{-1}=\left(\begin{array}{cc}\lambda_i&0\\ 0&\frac{1}{\lambda_i}\\\end{array}\right),\qquad \lambda_i\neq \pm 1, \quad \lambda_i+\frac{1}{\lambda_i}=p_i,
\end{equation}
\begin{equation}\label{pij2-param-k}
PM_kP^{-1}=\left(\begin{array}{cc} -\frac{p_k- 2\epsilon_{ki}\lambda_i}{\lambda_i^2-1}&  -\frac{(p_k\lambda_i-\epsilon_{ki}  (\lambda_i^2+1))^2}{(\lambda_i^2-1)^2}\\
1 & \frac{\lambda_i(p_k \lambda_i- 2\epsilon_{ki})}{\lambda_i^2-1} \\
\end{array}\right),
\end{equation}
and for $p_k\neq \epsilon_{ki} p_i$:
\begin{eqnarray}\label{pij2-param-j}
PM_jP^{-1}&=&\left(\begin{array}{c} -\frac{p_j- 2\epsilon_{ji}\lambda_i}{\lambda_i^2-1}\\
  \frac{(\lambda_i^2-1)(2\epsilon_{kj}-p_{ikj}\lambda_i)+(p_k \lambda_i- 2\epsilon_{ki})(p_j \lambda_i- 2\epsilon_{ji}) }{(p_k\lambda_i-\epsilon_{ki}  (\lambda_i^2+1))^2} \\
\end{array}\right.\qquad\qquad\\
&&\quad\quad \left. 
\begin{array}{c}  - \frac{\lambda_i^2(p_k \lambda_i- 2\epsilon_{ki})(p_j \lambda_i- 2\epsilon_{ji}) +\lambda_i(\lambda_i^2-1)(p_{ikj}- 2\epsilon_{kj}\lambda_i)}{(\lambda_i^2-1)^2}\\
 \frac{\lambda_i(p_j \lambda_i- 2\epsilon_{ji})}{\lambda_i^2-1} \\
\end{array}
\right)\nonumber
\end{eqnarray}
\begin{eqnarray}\label{pij2-param-l}
PM_lP^{-1}&=&\left(\begin{array}{c} -\frac{p_l- 2\epsilon_{li}\lambda_i}{\lambda_i^2-1}\\
  \frac{(\lambda_i^2-1)(2\epsilon_{kl}-p_{ikl}\lambda_i)+(p_k \lambda_i- 2\epsilon_{ki})(p_l \lambda_i- 2\epsilon_{li}) }{(p_k\lambda_i-\epsilon_{ki}  (\lambda_i^2+1))^2} \\
\end{array}\right.\qquad\qquad\\
&& \quad \quad \left. 
\begin{array}{c}  - \frac{\lambda_i^2(p_k \lambda_i- 2\epsilon_{ki})(p_l \lambda_i- 2\epsilon_{li}) +\lambda_i(\lambda_i^2-1)(p_{ikl}- 2\epsilon_{kl}\lambda_i)}{(\lambda_i^2-1)^2}\\
 \frac{\lambda_i(p_l \lambda_i- 2\epsilon_{li})}{\lambda_i^2-1} \\
\end{array}
\right)\nonumber
\end{eqnarray}
and if $p_k=\epsilon_{ki} p_i$, then  $p_{ikj}(\lambda_i^2+1)\neq 2 \lambda_i(\epsilon_{ki}\epsilon_{ji}+\epsilon_{kj})$ and $p_{ikl}(\lambda_i^2+1)\neq 2 \lambda_i(\epsilon_{ki}\epsilon_{li}+\epsilon_{kl} )$ and
\begin{eqnarray}\label{pij2-param-jeq}
PM_jP^{-1}&=&\left(\begin{array}{c} \frac{\lambda_i(p_{ikj} \lambda_i-2 \epsilon_{kj})} {\epsilon_{ki}(\lambda_i^2-1)}\\
\\
\frac{\lambda_i^4(p_{ikj}\lambda_i-2\epsilon_{kj})^2-2 \epsilon_{kj}\epsilon_{ji}\lambda_i^2(p_{ikj}\lambda_i-2\epsilon_{kj})(\lambda_i^2-1)+(\lambda_i^2-1)^2}
{(\lambda_i^2-1)^2\lambda_i(2\lambda_i(\epsilon_{ki}\epsilon_{ji}+\epsilon_{kj})-p_{ikj}(\lambda_i^2+1)^2)}
\end{array}\right.\\
&& \quad\qquad\qquad \quad \left. 
\begin{array}{c}  \lambda_i(p_{ikj}(\lambda_i^2+1)-2 \lambda_i(\epsilon_{ki}\epsilon_{ji}+\epsilon_{kj} ))\\
 \frac{2 \epsilon_{ki}\epsilon_{ji}\lambda_i(\lambda_i^2-1)-\lambda_i^3(p_{ikj}\lambda_i-2\epsilon_{kj})}{\epsilon_{ki}(\lambda_i^2-1)}\\
\end{array}
\right)\nonumber
\end{eqnarray}
\begin{eqnarray}\label{pij2-param-leq}
PM_lP^{-1}&=&\left(\begin{array}{c} \frac{\lambda_i(p_{ikl} \lambda_i-2 \epsilon_{kl})} {\epsilon_{ki}(\lambda_i^2-1)}\\
\\
\frac{\lambda_i^4(p_{ikl}\lambda_i-2\epsilon_{kl})^2-2 \epsilon_{kl}\epsilon_{li}\lambda_i^2(p_{ikl}\lambda_i-2\epsilon_{kl})(\lambda_i^2-1)+(\lambda_i^2-1)^2}
{(\lambda_i^2-1)^2\lambda_i(2\lambda_i(\epsilon_{ki}\epsilon_{li}+\epsilon_{kl})-p_{ikl}(\lambda_i^2+1)^2)}
\end{array}\right.\\
&& \quad\qquad\qquad \quad \left. 
\begin{array}{c}  \lambda_i(p_{ikl}(\lambda_i^2+1)-2 \lambda_i(\epsilon_{ki}\epsilon_{li}+\epsilon_{kl} ))\\
 \frac{2 \epsilon_{ki}\epsilon_{li}\lambda_i(\lambda_i^2-1)-\lambda_i^3(p_{ikl}\lambda_i-2\epsilon_{kl})}{\epsilon_{ki}(\lambda_i^2-1)}\\
\end{array}
\right)\nonumber
\end{eqnarray}
If none of the monodromy matrices is diagonalizable, then there exists a choice of the ordering of  the indices $i,j,k,l\in\{1,2,3,4\}$ such that the following parameterization holds true:
\begin{equation}\label{ndpij2-param-i}
PM_iP^{-1}=\left(\begin{array}{cc}\epsilon_i&1\\ 0&\epsilon_i\\
\end{array}\right),\qquad 
PM_jP^{-1}=\left(\begin{array}{cc}\epsilon_j&0\\4\epsilon_{ij} &\epsilon_j\\
\end{array}\right),
\end{equation}
\begin{equation}\label{ndpij2-param-k}
PM_kP^{-1}=\left(\begin{array}{cc}
 \frac{p_{ijk}-2 \epsilon_{ik}\epsilon_{j} -2 \epsilon_{jk}\epsilon_{i} +2 \epsilon_{i}\epsilon_{j} \epsilon_k } {4\epsilon_{ij}}
&\frac{\epsilon_{jk}-\epsilon_j\epsilon_k}{2\epsilon_{ij}}\\
2(\epsilon_{ik}-\epsilon_i\epsilon_k)& \frac{2 \epsilon_{ik}\epsilon_{j} +2 \epsilon_{jk}\epsilon_{i}+
8  \epsilon_{ij}\epsilon_{k} -2 \epsilon_{i}\epsilon_{j} \epsilon_k -p_{ijk}} {4\epsilon_{ij}} \\
\end{array}\right)
\end{equation}
\begin{equation}\label{ndpij2-param-l}
PM_lP^{-1}=\left(\begin{array}{cc}
 \frac{p_{ijl}-2 \epsilon_{il}\epsilon_{j} -2 \epsilon_{jl}\epsilon_{i} +2 \epsilon_{i}\epsilon_{j} \epsilon_l } {4\epsilon_{ij}}
&\frac{\epsilon_{jl}-\epsilon_j\epsilon_l}{2\epsilon_{ij}}\\
2(\epsilon_{il}-\epsilon_i\epsilon_l)& \frac{2 \epsilon_{il}\epsilon_{j} +2 \epsilon_{jl}\epsilon_{i}+
8  \epsilon_{ij}\epsilon_{l} -2 \epsilon_{i}\epsilon_{j} \epsilon_l -p_{ijl}} {4\epsilon_{ij}} \\
\end{array}\right)\end{equation}
\end{proposition}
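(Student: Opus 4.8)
\emph{Approach.} The plan is to run exactly the scheme of Theorem~\ref{lm:coord} and Lemma~\ref{lm:coord1}: use a global conjugation $P$ to bring one distinguished monodromy matrix into a normal form, and then recover the entries of the remaining three matrices from the traces $p_i$, $p_{ij}$, $p_{ijk}$ by solving a short sequence of linear equations, using $\det M_s=1$ whenever one extra relation is needed. The difference from the earlier statements is that now \emph{every} product $M_jM_k$ has $\Tr M_jM_k=2\epsilon_{jk}=\pm2$, so no product $M_jM_k$ is diagonalizable; instead we normalize through an individual $M_i$. If some $M_i$ is diagonalizable we are in the first half of the statement, otherwise in the second.

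\emph{Case 1: some $M_i$ is diagonalizable.} Since $M_i\neq\pm\mathbb{I}$, it has distinct eigenvalues $\lambda_i^{\pm1}$ with $\lambda_i+\lambda_i^{-1}=p_i$ and $\lambda_i\neq\pm1$; a suitable $P$ gives $PM_iP^{-1}=\mathrm{diag}(\lambda_i,\lambda_i^{-1})$, which is \eqref{pij2-param-i}. The residual gauge is conjugation by $\mathrm{diag}(t,t^{-1})$ together with the involution $\bigl(\begin{smallmatrix}0&1\\-1&0\end{smallmatrix}\bigr)$, which swaps $\lambda_i\leftrightarrow\lambda_i^{-1}$. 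Irreducibility of $\langle M_1,\dots,M_4\rangle$ forbids all of the other three matrices being upper triangular, so (after possibly applying the involution and relabelling) one of them, call it $M_k$, has non-zero $(2,1)$ entry, which we rescale to $1$. The equations $\Tr M_k=p_k$ and $\Tr M_iM_k=2\epsilon_{ki}$ form a linear system with coefficient determinant $\lambda_i-\lambda_i^{-1}\neq0$ that fixes the diagonal of $M_k$, and $\det M_k=1$ then fixes its $(1,2)$ entry. A direct computation gives \eqref{pij2-param-k} together with $(M_k)_{12}=-\bigl(p_k\lambda_i-\epsilon_{ki}(\lambda_i^2+1)\bigr)^2/(\lambda_i^2-1)^2$, which vanishes precisely when $p_k=\epsilon_{ki}p_i$. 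This is the origin of the dichotomy in the statement.

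\emph{The two sub-cases.} If $p_k\neq\epsilon_{ki}p_i$ then $(M_k)_{12}\neq0$: for $M_j$ (and likewise $M_l$) the diagonal entries come from $\Tr M_j=p_j$ and $\Tr M_iM_j=2\epsilon_{ji}$, the pair $\bigl((M_j)_{12},(M_k)_{12}(M_j)_{21}\bigr)$ solves the $2\times2$ system built from $\Tr M_jM_k=2\epsilon_{jk}$ and $\Tr M_iM_jM_k=p_{ikj}$ (determinant $\lambda_i^{-1}-\lambda_i\neq0$), and $(M_j)_{21}$ is then recovered using $(M_k)_{12}\neq0$; no separate check of $\det M_j=1$ is required, since $(M_1,\dots,M_4)$ genuinely lies in $\mathcal U$ and the unique solution of these linear systems must be its entries. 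Substituting \eqref{eq:lambda-r}--\eqref{eq:yijkl} yields \eqref{pij2-param-j}--\eqref{pij2-param-l}. If instead $p_k=\epsilon_{ki}p_i$, one finds $M_k=\bigl(\begin{smallmatrix}\epsilon_{ki}\lambda_i^{-1}&0\\1&\epsilon_{ki}\lambda_i\end{smallmatrix}\bigr)$; since $(M_k)_{12}=0$, the equation $\Tr M_jM_k=2\epsilon_{jk}$ now gives $(M_j)_{12}$ directly, and rewriting it with $p_k=\epsilon_{ki}p_i$ and the trace identity linking the two length-three words in $M_i,M_j,M_k$ brings it to the form in \eqref{pij2-param-jeq}; irreducibility forces at least one of $M_j,M_l$ not to be lower triangular, and one checks that in fact both have non-zero $(1,2)$ entry, i.e. $p_{ikj}(\lambda_i^2+1)\neq2\lambda_i(\epsilon_{ki}\epsilon_{ji}+\epsilon_{kj})$ and similarly for $l$; finally $\det=1$ gives the $(2,1)$ entries and produces \eqref{pij2-param-jeq}--\eqref{pij2-param-leq}.

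\emph{Case 2: no $M_i$ is diagonalizable.} Then each $M_i$ is non-scalar with a double eigenvalue $\epsilon_i=\pm1$, so $M_i=\epsilon_i\mathbb{I}+N_i$ with $N_i$ of rank one and $\ker N_i=\mathrm{Im}\,N_i$ the eigenline of $M_i$. If the four eigenlines coincided the group would be reducible, so two of them differ; relabelling these $M_i,M_j$ and conjugating so that the eigenline of $M_i$ is $\complessi e_1$ and that of $M_j$ is $\complessi e_2$, then using a diagonal conjugation to set $(M_i)_{12}=1$, we get $(M_i)_{12}(M_j)_{21}=\Tr M_iM_j-2\epsilon_i\epsilon_j=2(\epsilon_{ij}-\epsilon_i\epsilon_j)$, which is non-zero since $N_i,N_j\neq0$; hence $\epsilon_{ij}=-\epsilon_i\epsilon_j$ and $(M_j)_{21}=4\epsilon_{ij}$, which is \eqref{ndpij2-param-i}. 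For $M_k$ (and likewise $M_l$) the four entries are the unique solution of the triangular linear system $\Tr M_k=p_k$, $\Tr M_iM_k=2\epsilon_{ik}$, $\Tr M_jM_k=2\epsilon_{jk}$, $\Tr M_iM_jM_k=p_{ijk}$ (the successive pivots are $1$, $4\epsilon_{ij}$, $4\epsilon_{ij}$), with $\det M_k=1$ automatic; this gives \eqref{ndpij2-param-k}--\eqref{ndpij2-param-l}.

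\emph{Main obstacle.} Each individual step is a routine $2\times2$ manipulation with the fundamental trace identities; the real burden is the combinatorial case analysis in Case~1 — choosing which matrix plays the role of $M_i$ and which that of $M_k$ so that the output is one of the listed normal forms, and in particular showing that the degenerate sub-case $p_k=\epsilon_{ki}p_i$ still forces \emph{both} $M_j$ and $M_l$ into the shape \eqref{pij2-param-jeq}--\eqref{pij2-param-leq}. It is precisely here that the global hypothesis $p_{jk}=2\epsilon_{jk}$ for all pairs (rather than for a single pair, as in Lemma~\ref{lm:coord1}) is used, since it severely constrains the eigenvalues of the matrices involved and rules out the further-degenerate configurations.
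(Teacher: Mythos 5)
Your proposal follows essentially the same route as the paper's proof: bring one matrix into diagonal (or, when none is diagonalizable, Jordan) normal form, use irreducibility to find a second matrix with non-vanishing off-diagonal entry and fix it by the residual diagonal/unipotent gauge, then recover all remaining entries by solving the linear systems coming from $\Tr M_s$, $\Tr M_iM_s$, $\Tr M_kM_s$ (resp.\ $\Tr M_jM_s$) and the length-three traces, with $\det=1$ supplying the last entry where needed. The only divergence is that you also sketch the degenerate sub-case $p_k=\epsilon_{ki}p_i$, which the paper omits ``for brevity,'' at a comparable (heuristic) level of detail.
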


\proof 
First let us assume that at least one matrix $M_i$ is diagonal and work in the basis in which $M_i$ assumes the form \eqref{pij2-param-i} with $\lambda_i\neq\pm 1$.

Let $j\neq i$, then we have a set of linear equations in the diagonal elements of $M_j$:
$$
\Tr\left(M_i M_j \right)=2 \epsilon_{ji} ,\qquad \Tr M_j=p_j, \quad \epsilon_{ji} =\pm 1,
$$
that it is solved by:
\begin{equation}\label{pij2-param-diag}
(M_j)_{11}= -\frac{p_j- 2\epsilon_{ji}\lambda_i}{\lambda_i^2-1},\qquad 
(M_j)_{22}= \frac{\lambda_i(p_j \lambda_i- 2\epsilon_{ji})}{\lambda_i^2-1},
\end{equation}
for $j=1,\dots,4$, $j\neq i$.

Since the monodromy group is not reducible, there is at least one matrix $M_k$, $k\neq i$ such that in the chosen basis, $(M_k)_{21}\neq 0$, then we use the freedom of global diagonal conjugation to set $(M_k)_{21}= 1$. Since $\det(M_k)=1$ we obtain the formula \eqref{pij2-param-k}. 

We now deal with the other two matrices  in the case $p_k\neq\epsilon_{ki} p_i$  - we only need to find the off-diagonal elements of these matrices. To this aim we use the following equations for $s=j,l$:
$$
\Tr(M_s M_k)=2\epsilon_{sk},\qquad 
\Tr(M_i M_k M_s)= p_{iks},
$$
which, combined with \eqref{pij2-param-diag} lead to
\eqref{pij2-param-j} and \eqref{pij2-param-k}. One can treat the case $p_k=\epsilon_{ki} p_i$  similarly, we omit the proof for brevity.
This concludes the proof of the first case.

To prove the second case, assume none of the matrices $M_1,\dots,M_4$ are diagonalizable, then ${\rm eigen}(M_i)=\{\epsilon_i,\epsilon_i\}$, $\forall i=1,\dots,4$, where $\epsilon_i=\pm 1$. Let us choose a global conjugation such that one of the matrices $M_i$ is in upper triangular form as in \eqref{ndpij2-param-i}.

Now, since the monodromy group is not reducible, there exists at least one $j$ such that $(M_j)_{21}\neq 0$. From 
$\Tr M_i M_j=2\epsilon_{ij}$ we have $2 \epsilon_i \epsilon_j + (M_j)_{21}=2\epsilon_{ij}$, so that $(M_j)_{21}\neq0$ implies $ \epsilon_i \epsilon_j =- \epsilon_{ij}$.
We perform a conjugation by a unipotent upper triangular matrix to impose  $(M_j)_{12}=0$, so we obtain the second equation in \eqref{ndpij2-param-i}.

For all other matrices we use $\Tr M_i M_s = 2 \epsilon_{is}$ and $\Tr M_j M_s = 2 \epsilon_{js}$, $s=k,l$ to find:
$$
(M_s)_{21} = 2(\epsilon_{is}-\epsilon_i\epsilon_s),\qquad (M_s)_{12} = \frac{\epsilon_{js}-\epsilon_j\epsilon_s}{2\epsilon_{ij}},\qquad 
$$
From $\Tr M_s=2\epsilon_s$ and $\Tr(M_i M_j M_s)=p_{ijs}$ we find the final formula \eqref{ndpij2-param-k}  for $s=k$ and \eqref{ndpij2-param-l} for $s=l$ respectively.
\endproof

In the following Theorem characterizes the space of parameters as an affine variety in the polynomial ring
\begin{equation}
\complessi [p_1,p_2,p_3,p_4,p_{21},p_{31},p_{32},p_{41},p_{42},p_{43},p_{321},p_{432},p_{431},p_{421}].
\label{eq:polyring}\end{equation}
\begin{theorem}\label{thm:6indep}
Consider $m := (M_1,\dots,M_4)\in\mathcal U$.

\noindent (i) The co-adjoint coordinates of $m$ defined in \eqref{15tuple} and \eqref{eq:pidef} belong to the zero locus of the following $15$ polynomials in the ring \eqref{eq:polyring}:
\begin{align}
f_{1}(p)&:=p_{32} p_{31} p_{21}  + p_{32}^2 + p_{31}^2 + p_{21}^2 -\label{eq:f321}\\
 & - (p_{1}  p_{321} + p_{2} p_{3})p_{32} - (p_{2} p_{321} + p_{1} p_{3}) p_{31} -\nonumber\\
 & - ( p_{3} p_{321} + p_{1} p_{2} ) p_{12} + p_{3}^2 + p_{2}^2 + p_{1}^2 + p_{321}^2 + p_{3} p_{2} p_{1} p_{321} - 4, \nonumber
\end{align}
\begin{align}
f_{2}(p)&:=p_{42} p_{41} p_{21} + p_{42}^2 + p_{41}^2 + p_{21}^2 - \label{eq:f421}\\
 & - (p_{1}  p_{421} + p_{2} p_{4})p_{42} - (p_{2} p_{421} + p_{1} p_{4}) p_{41} -\nonumber\\
 & - ( p_{4} p_{421} + p_{1} p_{2} ) p_{12} + p_{4}^2 + p_{2}^2 + p_{1}^2 + p_{421}^2 + p_{4} p_{2} p_{1} p_{421} - 4,\nonumber
\end{align}
\begin{align}
f_{3}(p) &:=p_{43} p_{41} p_{31} + p_{43}^2 + p_{41}^2 + p_{31}^2 -\label{eq:f431} \\
 & - (p_{1}  p_{431} + p_{3} p_{4})p_{43} - (p_{3} p_{431} + p_{1} p_{4}) p_{41} -\nonumber\\
 &- ( p_{4} p_{431} + p_{1} p_{3} ) p_{13} + p_{4}^2 + p_{3}^2 + p_{1}^2 + p_{431}^2 + p_{4} p_{3} p_{1} p_{431} - 4,\nonumber
\end{align}
\begin{align}
f_{4}(p)&:= p_{43} p_{42} p_{32} + p_{43}^2 + p_{42}^2 + p_{32}^2 - \label{eq:f432}\\
 & - (p_{2}  p_{432} + p_{3} p_{4})p_{43} - (p_{3} p_{432} + p_{2} p_{4}) p_{42} -\nonumber\\
 &- ( p_{4} p_{432} + p_{2} p_{3} ) p_{23} + p_{4}^2 + p_{3}^2 + p_{2}^2 + p_{432}^2 + p_{4} p_{3} p_{2} p_{432} - 4, \nonumber
\end{align}
\begin{align}
f_5(p) &:= - 2 p_{\infty} + p_{1}p_{2} p_{3} p_{4}+p_{1} p_{432}+p_{2} p_{431}+p_{3} p_{421}+p_{321} p_{4}+\nonumber\\
   &+p_{21} p_{43}+p_{32} p_{41}  -p_{1} p_{2} p_{43}
   -p_{1} p_{4} p_{32}
   -p_{2} p_{3} p_{41}
   -p_{3} p_{4} p_{21}-\nonumber\\
   & - p_{42} p_{31},\label{eq:f1}
\end{align}
\begin{align}
f_{6}(p) &:= p_{2} p_{3} p_{4} - p_{32} p_{4} - p_{21} p_{3} p_{41} + p_{321} p_{41} - p_{3} p_{42} + p_{1} p_{3} p_{421}- \nonumber\\ & - 
 p_{31} p_{421} - p_{2} p_{43} +p_{21} p_{431} + 2 p_{432} - p_{1} p_\infty,\label{eq:f6}
\end{align}
\begin{align}
f_7(p) &:= -p_{1} p_{4} + 2 p_{41} + p_{21} p_{42} - p_{2} p_{421} + p_{31} p_{43} + p_{21} p_{32} p_{43} -\nonumber\\ &- p_{2} p_{321} p_{43}  - p_{3} p_{431} - p_{21} p_{3} p_{432} + p_{321} p_{432} + p_{2} p_{3} p_\infty -\nonumber\\ &- p_{32} p_\infty,\label{eq:f2}
\end{align}
\begin{align}
f_8(p) &:= -p_{1} p_{2} p_{3} + p_{21} p_{3} + p_{2} p_{31} + p_{1} p_{32} - 2 p_{321} + p_{2} p_{41} p_{43} -\nonumber\\ &- p_{421} p_{43} - p_{2} p_{4} p_{431} + p_{42} p_{431} - p_{41} p_{432} + p_{4} p_\infty,\label{eq:f3}
\end{align}
\begin{align}
f_9(p) &:= -p_{1} p_{2} + 2 p_{21} + p_{31} p_{32} - p_{3} p_{321} + p_{41} p_{42} - p_{4} p_{421} +\nonumber\\ &+ p_{32} p_{41} p_{43} - p_{32} p_{4} p_{431} - p_{3} p_{41} p_{432} + p_{431} p_{432} + p_{3} p_{4} p_\infty -\nonumber\\ &- p_{43} p_\infty,\label{eq:f4}
\end{align}
\begin{align}
f_{10}(p) &:= -p_{1} p_{2} p_{4} + p_{21} p_{4} + p_{2} p_{41} + p_{1} p_{42} - 2 p_{421} + p_{1} p_{32} p_{43} -\nonumber\\&- p_{321} p_{43} -p_{32} p_{431} - \ p_{1} p_{3} p_{432} + p_{31} p_{432} + p_{3} p_{\infty},\label{eq:f5}
\end{align}
\begin{align}
f_{11}(p) &:= p_{1} p_{3} p_{4} - p_{31} p_{4} - p_{21} p_{32} p_{4} + p_{2} p_{321} p_{4} - p_{3} p_{41}- p_{321} p_{42} +\nonumber\\&+p_{32} p_{421} - p_{1} p_{43} + 2 p_{431} +p_{21} p_{432} - p_{2} p_{\infty},\label{eq:f7}
\end{align}
\begin{align}
f_{12}(p) &:= -p_{2} p_{4} + p_{21} p_{41} + 2 p_{42} - p_{1} p_{421} + p_{32} p_{43} - p_{321} p_{431} -\nonumber\\&- p_{3} p_{432} + p_{31} p_{\infty},\label{eq:f8}
\end{align}
\begin{align}
f_{13}(p) &:= p_{1} p_{3} - 2 p_{31} - p_{21} p_{32} + p_{2} p_{321} - p_{41} p_{43}+ p_{4} p_{431} +\nonumber\\& +
   p_{421} p_{432} - p_{42} p_{\infty},\label{eq:f9}
\end{align}
\begin{align}
f_{14}(p) &:= p_{2} p_{3} - p_{21} p_{31} - 2  p_{32} + p_{1} p_{321} - p_{21} p_{41} p_{43} -  p_{42} p_{43} + \nonumber\\&
 p_{1} p_{421} p_{43} + p_{21} p_{4} p_{431} - p_{421} p_{431} + p_{4} p_{432} - 
 p_{1} p_{4} p_{\infty} +\nonumber\\&+ p_{41} p_{\infty},\label{eq:f10}
\end{align}
\begin{align}
f_{15}(p) &:= -p_{3} p_{4} + p_{31} p_{41} + p_{21} p_{32} p_{41} - p_{2} p_{321} p_{41} + p_{32} p_{42} -\nonumber\\&- p_{1} p_{32} p_{421} + p_{321} p_{421} + 2 p_{43} - p_{1} p_{431} - p_{2} p_{432} + p_{1} p_{2} p_\infty-\nonumber\\&-p_{21}  p_\infty.\label{eq:f11}
\end{align}

\noindent (ii) For every given generic $p_1,\dots,p_4,p_\infty$, the affine variety $\mathcal A$ defined in \eqref{eq:algv} with 
$I = < f_1,\dots,f_{15} >$, is four dimensional.
\end{theorem}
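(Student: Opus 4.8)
\emph{Part (i).} I would show that each $f_\ell$ is a polynomial identity valid on all of $\mathrm{SL}_2(\complessi)^4$, so that it automatically holds when the variables are evaluated at the co-adjoint coordinates of any $m\in\mathcal U$. All fifteen are consequences of the Cayley--Hamilton relation $A^{-1}=(\Tr A)\,\ID-A$ and its corollary $\Tr(XAY)+\Tr(XA^{-1}Y)=(\Tr A)\Tr(XY)$: for each $3$-element subset $\{i,j,k\}\subset\{1,2,3,4\}$ the traces $\Tr(M_iM_jM_k)$ and $\Tr(M_iM_kM_j)$ are the two roots of a monic quadratic whose coefficients are polynomials in $p_i,p_j,p_k,p_{ij},p_{ik},p_{jk}$, and the statement that $p_{ijk}$ satisfies this quadratic is the Fricke relation of the triple and yields $f_1,\dots,f_4$; relation $f_5$ is the classical reduction of the $4$-letter trace $p_\infty=\Tr(M_4M_3M_2M_1)$ to the fourteen generators $p_i,p_{ij},p_{ijk}$, obtained by peeling off one factor at a time; and $f_6,\dots,f_{15}$ are further identities of the same type, expressing the traces of the cyclically inequivalent $4$-letter words $\Tr(M_iM_jM_kM_l)$ in those fourteen generators in two different ways whose compatibility is exactly $f_6=\dots=f_{15}=0$. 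In practice I would verify the whole list, and guard against transcription errors, by substituting the explicit parametrisations of Theorem~\ref{lm:coord}, Lemma~\ref{lm:coord1} and Proposition~\ref{prop:coord} (which between them cover $\mathcal U$) and checking $f_\ell\equiv0$ chart by chart.

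\emph{Part (ii): lower bound.} Fix generic $p_1,\dots,p_4,p_\infty$. Since $\pi_1(\Sigma_5)$ is free of rank $4$, $\dim\operatorname{Hom}(\pi_1(\Sigma_5),\mathrm{SL}_2(\complessi))=12$; prescribing the five traces cuts this down by five and simultaneous conjugation by $\mathrm{PGL}_2(\complessi)$ acts with finite stabiliser on the irreducible locus, so $\dim\mathcal U=12-5-3=4$. By Theorem~\ref{lm:coord} the co-adjoint coordinates determine $m$ up to conjugation on a dense open subset of $\mathcal U$, so $\mathcal U\to\mathcal A$ is generically injective and $\dim\mathcal A\ge\dim\mathcal U=4$.

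\emph{Part (ii): upper bound, and the main obstacle.} With the parameters fixed, $\mathcal A\subset\complessi^{10}$ in the coordinates $p_{21},\dots,p_{43},p_{321},p_{421},p_{431},p_{432}$. Each of $f_1,f_2,f_3,f_4$ is monic of degree two in $p_{321},p_{421},p_{431},p_{432}$ respectively and contains no other triple trace, so they form an evident regular sequence and $\dim V(f_1,\dots,f_4)=6$; it then suffices to extend this to a regular sequence of length six. I would argue that $f_5$ has nonzero reduction modulo $(f_1,\dots,f_4)$ — for generic parameters this is visible from its term $-2p_\infty$ — and that one further relation $f_\ell$, $6\le\ell\le15$, likewise has nonzero reduction modulo $(f_1,\dots,f_5)$; then $f_1,\dots,f_5,f_\ell$ is a regular sequence, $\dim\mathcal A\le10-6=4$, and $f_7,\dots,f_{15}$, being algebraically dependent on these, impose nothing further — so $\dim\mathcal A=4$ by the lower bound. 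The hard part is precisely this last verification: that the relations beyond $f_5$ are not all absorbed into $(f_1,\dots,f_5)$, equivalently that the full codimension six is attained. This is the one step that is computational rather than structural; I would settle it by evaluating a $6\times 6$ Jacobian minor of $(f_1,\dots,f_5,f_\ell)$ at a single well-chosen point of $\mathcal A$ (or by a Gr\"obner basis computation of the kind in \cite{ouralgorithms}). As a conceptual alternative to the whole upper bound, the parametrisations of Theorem~\ref{lm:coord}, Lemma~\ref{lm:coord1} and Proposition~\ref{prop:coord} realise the generic point of every component of $\mathcal A$ as the co-adjoint coordinates of a point of $\mathcal U$ — the vanishing of the $f_\ell$ being exactly what makes the reconstructed matrices an $\mathrm{SL}_2(\complessi)$ representation — which also gives $\dim\mathcal A\le\dim\mathcal U=4$.
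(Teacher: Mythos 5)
For part (i) your route is essentially the paper's: the fifteen relations are consequences of iterated skein/Cayley--Hamilton identities \eqref{eq:rule1}, with $f_1,\dots,f_4$ the Fricke relations of the four triples and $f_5,\dots,f_{15}$ reductions of four-letter traces; your chart-by-chart substitution via Theorem~\ref{lm:coord}, Lemma~\ref{lm:coord1} and Proposition~\ref{prop:coord} is just an alternative mechanical verification, and since the statement only concerns $m\in\mathcal U$ this is adequate.

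The gap is in part (ii), which in the paper is settled by a Macaulay2 dimension computation of \eqref{eq:algv}. Your lower bound ($\dim\mathcal A\ge 4$ via the count $12-5-3$ and generic injectivity from Theorem~\ref{lm:coord}) is sound, but the upper bound as you argue it does not go through. First, ``$f_5$ has nonzero reduction modulo $(f_1,\dots,f_4)$'' only says $f_5\notin(f_1,\dots,f_4)$; to extend the regular sequence you need $f_5$ to be a non-zerodivisor in $\complessi[p_{21},\dots,p_{432}]/(f_1,\dots,f_4)$, i.e.\ not to vanish identically on any irreducible component of $V(f_1,\dots,f_4)$, and likewise for the sixth element --- the visible term $-2p_\infty$ gives non-membership, not this stronger property. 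Second, a nonvanishing $6\times 6$ Jacobian minor of $(f_1,\dots,f_5,f_\ell)$ at a single point of $\mathcal A$ only shows that the component through that point has codimension $6$; it does not exclude other components of $V(f_1,\dots,f_{15})$ of dimension greater than $4$ on which the Jacobian degenerates, so it does not bound $\dim\mathcal A$. Third, your ``conceptual alternative'' presupposes that the generic point of every component of $\mathcal A$ lies in the image of $\mathcal U$, i.e.\ that the reconstruction formulas of Theorem~\ref{lm:coord}, Lemma~\ref{lm:coord1} and Proposition~\ref{prop:coord}, applied to an abstract solution $p$ of $f_1=\dots=f_{15}=0$, return matrices \emph{all} of whose fifteen trace coordinates reproduce $p$, and moreover that no component of $V(I)$ lies entirely in the degenerate locus where all those charts fail; neither claim is established in the paper (those results are proved for points coming from $\mathcal U$, not for abstract points of $V(I)$) nor in your proposal. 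What actually closes the argument is precisely your parenthetical fallback --- a Gr\"obner-basis/dimension computation --- which is the paper's own proof via Macaulay2 \cite{macaulay}, \cite{ouralgorithms}.
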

\proof
To prove the relations  \eqref{eq:f321}, \dots,  \eqref{eq:f11} we use iterations of the \textit{skein relation}:
\begin{equation}
\Tr A B + \Tr A^{-1} B = \Tr A \Tr B ,~ \forall A,B \in \text{SL}_2(\complessi),
\label{eq:rule1}\end{equation} 
together with  \eqref{eq:cyclic}.

To prove statement (ii), we used Macaulay2 \cite{macaulay},  in order to compute the dimension of the affine variety defined in  \eqref{eq:algv} .
The result is that \eqref{eq:algv} has dimension four.
\endproof

\begin{corollary}
The quantities $(p_{21},\dots,p_{43},p_{321},\dots,p_{421})$
give a set of over-determined coordinates on the open subset $\mathcal U\subset  \hhmg2$ defined in \eqref{eq:bigopen}.
\end{corollary}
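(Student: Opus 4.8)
The plan is to show that the co-adjoint coordinates \eqref{eq:pidef} \emph{separate the points} of $\mathcal U$ — equivalently, that a point of $\mathcal U$ is recovered, up to simultaneous conjugation, from the $15$-tuple \eqref{15tuple} — and then to read off the over-determinacy from Theorem \ref{thm:6indep}. Once separation is known, over-determinacy is immediate: by Theorem \ref{thm:6indep}(i) the tuple \eqref{15tuple} always lies on the common zero locus of the fifteen polynomials \eqref{eq:f321}--\eqref{eq:f11}, and by Theorem \ref{thm:6indep}(ii) that locus is $4$-dimensional over generic parameters $p_1,\dots,p_4,p_\infty$, which is strictly smaller than the ten coordinates $p_{21},\dots,p_{43},p_{321},\dots,p_{421}$; hence these coordinates, while they do determine the point, are genuinely redundant.

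For the separation statement I would simply combine the three reconstruction results already established. Given $m=(M_1,\dots,M_4)\in\mathcal U$, I split into cases: if some $p_{jk}\neq\pm2$ and, for that pair, one of $g(p_{jk},p_l,p_{jkl})$, $g(p_j,p_k,p_{jk})$, $g(p_{jk},p_i,p_{ijk})$ is nonzero, then $m$ lies in the open set \eqref{eq:bigU} and Theorem \ref{lm:coord} applies; if some $p_{jk}\neq\pm2$ but all three of these $g$'s vanish, Lemma \ref{lm:coord1} applies; and if every $p_{jk}=\pm2$, Proposition \ref{prop:coord} applies, its two alternatives (at least one $M_i$ diagonalizable, or none) being exhaustive. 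In each case the cited result produces an explicit $P\in SL_2(\complessi)$ and closed formulae for $PM_1P^{-1},\dots,PM_4P^{-1}$ involving only $p_1,\dots,p_4$ and the trace coordinates $p_{ij},p_{ijk}$. Therefore two points of $\mathcal U$ with equal coordinates \eqref{15tuple} fall in the same case, are reconstructed by the same formulae, hence are simultaneously conjugate and equal in $\mathcal U$. Together with Lemma \ref{lm:u1} this also exhibits $\mathcal U$ as the affine variety $\mathcal A$.

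The only real work is the bookkeeping in the previous paragraph: checking that the three cases are exhaustive, and that the residual genericity conditions baked into Lemma \ref{lm:coord1} and Proposition \ref{prop:coord} — the existence of an index $l$ with $p_{lk}\neq\lambda_l\lambda_k+(\lambda_l\lambda_k)^{-1}$, the dichotomy $p_k=\epsilon_{ki}p_i$ versus $p_k\neq\epsilon_{ki}p_i$, the diagonalizable/non-diagonalizable split — are automatically satisfied for $m\in\mathcal U$, which follows from irreducibility of $\langle M_1,\dots,M_4\rangle$ and $M_i\neq\pm\mathbb{I}$ as imposed in \eqref{eq:bigopen}. I expect this case analysis, and not any computation, to be the whole of the proof; no new identity is needed beyond those in Theorem \ref{lm:coord}, Lemma \ref{lm:coord1}, Proposition \ref{prop:coord} and Theorem \ref{thm:6indep}.
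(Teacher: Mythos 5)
Your proposal is correct and follows essentially the same route as the paper: the paper's proof also simply invokes Theorem \ref{lm:coord}, Lemma \ref{lm:coord1} and Proposition \ref{prop:coord} to conclude that the trace coordinates parameterize the monodromy matrices up to global conjugation, and Theorem \ref{thm:6indep} to conclude that only four of the quantities $p_{ij}$, $p_{ijk}$ are independent for fixed $p_1,\dots,p_4,p_\infty$. Your extra bookkeeping about the exhaustiveness of the three cases is just a more explicit spelling-out of what the paper leaves implicit.
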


\proof
Thanks to Theorem \ref{lm:coord}, Lemma \ref{lm:coord1} and Proposition \ref{prop:coord} the quantities $p_i$, $p_{ij}$, $p_{ijk}$ parameterize the monodromy matrices up to global conjugation. Thanks to Theorem \ref{thm:6indep} for every fixed choice of $p_1,p_2,p_3,p_4,p_\infty$ only $4$ among the quantities $p_{ij}$, $p_{ijk}$ for $i,j,k=1,\dots,4$ are independent. This concludes the proof.\endproof

\section{Braid group action on $\mathcal{M}_{\mathcal{G}_2}$}\label{sec:G2case}

We start this section by proving Lemma \ref{lm:u2}.

\proof First we prove that  that action \eqref{eq:braidpaction} is well defined,  or in other words that the $I = <\mathcal{F}>= \{ f_1,\dots,f_{15}\}$ is invariant under the action \eqref{eq:braidpaction}. To this aim, we need to show that for each generator $\sigma_i$, $i=1,2,3$, $\sigma_i(I)= I$. We carry out the computation for $\sigma_1$ only, the other computations are similar.
$$
f_{1}(\sigma_1(p)) = f_{1}(p),\qquad 
f_{2}(\sigma_1(p)) = f_{2}(p), \qquad
f_{3}(\sigma_1(p)) = f_{4}(p),
$$
\begin{flalign*}
f_{4}(\sigma_1(p)) &= f_{3}(p) +(p_{21}p_{42}-p_2p_{421})f_{6}(p) + (p_{21}p_{431}-p_2p_{\infty})f_{11}(p)+ \nonumber\\ 
&(p_2p_{321}-p_{21}p_{32})f_{13}(p),&
\end{flalign*}
$$
f_{5}(\sigma_1(p)) = f_{1}(p) - p_{2}  f_{7}(p), \qquad
f_{6}(\sigma_1(p)) = f_{8}(p) - p_{21}  f_{2}(p),
$$
$$
f_{7}(\sigma_1(p)) = f_{3}(p) + p_{2}  f_{9}(p),\qquad f_{9}(\sigma_1(p)) = f_{5}(p) - p_{2}  f_{2}(p),
$$
\begin{flalign*}
f_{8}(\sigma_1(p)) &= f_{4}(p) - p_{42}  f_{2}(p) - p_{432}  f_{7}(p) + p_{32}  f_{9}(p),&
\end{flalign*}
$$
f_{10}(\sigma_1(p)) = -f_{7}(p),\qquad
\qquad
f_{11}(\sigma_1(p)) = f_{6}(p) - p_{21}  f_{7}(p),
$$
$$
f_{12}(\sigma_1(p)) = -  f_{2}(p),\qquad f_{14}(\sigma_1(p)) = -  f_{9}(p),
$$
$$
f_{13}(\sigma_1(p)) = - p_{21}  f_{9}(p) +  f_{10}(p),  \qquad f_{15}(\sigma_1(p)) =  f_{11}(p) + p_1  f_{7}(p).
$$
Similar formulae can be proved for all other generators of the braid group. This shows  that the action \eqref{eq:braidpaction} is well defined on $\mathcal A$.

In order to prove that $\sigma_i$ for $i=1,2,3$, defined in \eqref{eq:braidpaction}, is indeed an action of the braid group $B_4$, we recall 
that the braid group $B_n$ in Artin's presentation is given by:
\begin{align}
B_n = \bigl\langle \sigma_1,\dots,\sigma_{n-1}~|~&\sigma_i \sigma_{i+1} \sigma_i = \sigma_{i+1} \sigma_i \sigma_{i+1},~1\leq i \leq n-2,\nonumber\\
										&\sigma_i \sigma_j = \sigma_j \sigma_i,~|i-j|>1 \bigr\rangle,
\label{eq:braidgrouppres}\end{align}
so, we need to prove that the following relations are satisfied:
\begin{equation}\label{eq:braidrelations}
\sigma_1 \sigma_3(m) = \sigma_3 \sigma_1(m),\quad
\sigma_1 \sigma_2 \sigma_1(m) = \sigma_2 \sigma_1 \sigma_2(m),\quad
\sigma_2 \sigma_3 \sigma_2(m) = \sigma_3 \sigma_2 \sigma_3(m).\
\end{equation}
The first relation is straightforward, while the last two follow from the fact that polynomials \eqref{eq:f7},\eqref{eq:f8},\eqref{eq:f9} and \eqref{eq:f10} are zero for every $p\in
\mathcal A$.
\endproof

This lemma allows us to reformulate our classification problem as follows:

Classify all finite orbits: 
\begin{equation*}
\mathcal{O}_{P_4}(p) = \left\{ \beta(p) | \beta \in P_4 \right\},
\end{equation*}
where $p$ is the following $15$-tuple of complex quantities:
\begin{equation*}
p = (p_1,p_2,p_3,p_4,p_{\infty},p_{21},p_{31},p_{32},p_{41},p_{42},p_{43},p_{321},p_{432},p_{431},p_{421})\in \complessi^{15},
\end{equation*}
defined in \eqref{eq:pidef}, and $P_4$ is the pure braid group $P_4=\langle\beta_{21 },\beta_{31 },\beta_{32 },\beta_{41 },\beta_{42},\beta_{43} \rangle$ where:
\begin{eqnarray}
\beta_{21 } = \sigma_1^2,\qquad 
\beta_{31 } = \sigma_2^{-1}\sigma_1^2 \sigma_2,\qquad
\beta_{32 } = \sigma_2^{2},\label{mypuregens} \\
\beta_{41 } = \sigma_3^{-1} \sigma_2^{-1} \sigma_1^2 \sigma_2 \sigma_3,\qquad
\beta_{42 } = \sigma_3^{-1} \sigma_2^2 \sigma_3,\qquad
\beta_{43 } = \sigma_3^2,\nonumber
\end{eqnarray}
and the generators satisfy the following relations:
\begin{equation}
\beta_{rs}^{}\beta_{ij}\beta_{rs}^{-1} = 
\begin{cases}
\beta_{ij}, & \mbox{if } j < s < r < i, \\
 	   & \mbox{or } s < r < j < i,\\
\beta_{rj}^{-1}\beta_{ij}\beta_{rj}^{}, &s < j = r < i,\\
\beta_{rj}^{-1}\beta_{sj}^{-1}\beta_{ij}\beta_{sj}^{}\beta_{rj}^{},&j = s < r < i,\\
\beta_{rj}^{-1}\beta_{sj}^{-1}\beta_{rj}^{}\beta_{sj}^{}\beta_{ij}\beta_{sj}^{-1}\beta_{rj}^{-1}\beta_{sj}^{}\beta_{rj}^{},&s < j < r < i.
\end{cases}
\label{puregensrelations}\end{equation}

\section{Restrictions}\label{sec:method}
In this section, we select subgroups $H\subset P_4$  such that the restricted action is isomorphic to the action of the pure braid group $P_3$ on the quotient space \eqref{eq:Mmp6}.

\begin{theorem}\label{thm:match}
The following four subgroups $H_i \subset P_4$ with $i=1,\dots,4$:
$$ 
H_1 := <\beta_{32 },\beta_{43 },\beta_{42 }>, \qquad
H_2 = <\beta_{43 },\beta_{31 },\beta_{41 }>,
$$
$$
H_3 = <\beta_{21 },\beta_{42 },\beta_{41 }>,\qquad
H_4 = <\beta_{21 },\beta_{32 },\beta_{31 }>,
$$
where the generators $\beta_{jk}$, $1\leq k<j\leq 4$, are defined in \eqref{mypuregens}, are isomorphic to the  pure braid group $P_3$. Moreover
given any ordered $4$-tuple of matrices $(M_1,M_2,M_3,M_4) \in \mathcal{U}$, each $H_i$, for $i=1,\dots,4$, acts as pure braid group $P_3$ on a certain triple of matrices $(N_1,N_2,N_3)\in\widehat{\mathcal{M}}_{PVI}$  given by:
\begin{align}
&H_1:~ \widehat{N}_{1} = M_2,~\widehat{N}_{2} = M_3,~\widehat{N}_{3} = M_4,~\widehat{N}_{\infty} = (M_4 M_3 M_2)^{-1},\label{ihatn}\\
&H_2:~ \widebar{N}_{1} = M_1,~\widebar{N}_{2} = M_3,~\widebar{N}_{3} = M_4,~\widebar{N}_{\infty} = (M_4 M_3 M_1)^{-1},\label{ibarn}\\
&H_3:~ \widecheck{N}_{1} = M_1,~\widecheck{N}_{2} = M_2,~\widecheck{N}_{3} = M_4,~\widecheck{N}_{\infty} = (M_4 M_2 M_1)^{-1},\label{icheckn}\\
&H_4:~ \widetilde{N}_{1} = M_1,~\widetilde{N}_{2} = M_2,~\widetilde{N}_{3} = M_3,~\widetilde{N}_{\infty} = (M_3 M_2 M_1)^{-1}.\label{itilden}
\end{align}
\end{theorem}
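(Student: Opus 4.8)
The plan is to verify Theorem \ref{thm:match} in two independent parts: first that each $H_i$ is abstractly isomorphic to $P_3$, and second that the restricted $B_4$-action on \hhmg2 descends to the natural $P_3$-action on $\widehat{\mathcal{M}}_{PVI}$ under the stated identification of matrices. For the first part I would work purely group-theoretically inside $P_4$. Recall that $P_3$ has the presentation $\langle \beta_{21},\beta_{31},\beta_{32}\rangle$ with the single family of relations coming from \eqref{puregensrelations} specialized to three strands (equivalently, $P_3\cong F_2\times\mathbb{Z}$, with the centre generated by $\beta_{32}\beta_{31}\beta_{21}$). For each $H_i$ I would exhibit an explicit relabelling of the three chosen generators that turns the restriction of the relations \eqref{puregensrelations} into exactly the $P_3$ relations; for instance for $H_1=\langle\beta_{32},\beta_{43},\beta_{42}\rangle$ one sends $\beta_{32}\mapsto\beta_{21}$, $\beta_{42}\mapsto\beta_{31}$, $\beta_{43}\mapsto\beta_{32}$ and checks that the only surviving relation among these three generators inside $P_4$ (the case $s<j<r<i$ does not arise, the relevant indices being $2<3$ and $4$) matches the $P_3$ relation $\beta_{rj}^{-1}\beta_{ij}\beta_{rj}=\ldots$. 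This is essentially a bookkeeping exercise with the indices in \eqref{puregensrelations}, and one should note that $H_i$ is the subgroup of $P_4$ fixing the strand with the ``missing'' index, which is well known to be a copy of $P_3$ (the fibre of the Fadell--Neuwirth fibration), so one may also simply invoke that structural fact.

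For the second part I would compute directly how the generators $\beta_{jk}$ of $H_i$, expressed via \eqref{mypuregens} in terms of $\sigma_1,\sigma_2,\sigma_3$ and then via the monodromy action \eqref{eq:braidaction}, act on the four matrices $(M_1,\dots,M_4)$, and observe that when restricted to the relevant triple (say $(M_2,M_3,M_4)$ for $H_1$) the formulas coincide with the standard $P_3$-action on $(N_1,N_2,N_3)$. Concretely: $\beta_{32}=\sigma_2^2$ conjugates $(M_2,M_3)$ by $M_3M_2$-type words leaving $M_4$ untouched; $\beta_{43}=\sigma_3^2$ acts on $(M_3,M_4)$ likewise; and $\beta_{42}=\sigma_3^{-1}\sigma_2^2\sigma_3$ acts on $M_2$ and $M_4$ through the product $M_4M_3$, exactly as the pure braid generator $\beta_{31}$ does on $(N_1,N_3)$ in $P_3$. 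The key point making this clean is that each $H_i$ was chosen precisely so that its generators never move the strand with the missing index, hence the corresponding matrix ($M_1$ for $H_1$, $M_2$ for $H_2$, etc.) stays fixed and its product with the remaining three equals $N_\infty^{-1}$; since the $\sigma_j$-action preserves $M_\infty$ and the triple's ``product-at-infinity'' $N_\infty$ is $(M_{i_3}M_{i_2}M_{i_1})^{-1}$, the image triple lands in $\widehat{\mathcal{M}}_{PVI}$ with the stated $N_\infty=\exp(i\pi\theta_\infty\sigma_3)$ after the global conjugation that diagonalizes it. I would present one case, say $H_1$, in full and remark that the other three follow by the evident symmetry of permuting the roles of the indices $1,2,3,4$.

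The main obstacle I anticipate is \emph{not} the abstract isomorphism (that is combinatorics) but the careful matching of the two group actions: one must check that the conjugating words produced by iterating \eqref{eq:braidaction} along $\sigma_3^{-1}\sigma_2^2\sigma_3$ etc.\ really reduce, using $M_\infty M_4M_3M_2M_1=\mathbb{I}$, to the short conjugations $N_1\mapsto (N_2N_1N_2^{-1})$, $N_3\mapsto N_3$, $N_2\mapsto \ldots$ that define $P_3$ on $\widehat{\mathcal{M}}_{PVI}$, and that no spurious dependence on the frozen matrix $M_1$ survives. I would handle this by writing $\sigma_3^{-1}\sigma_2^2\sigma_3$ out on the $4$-tuple step by step and simplifying; the cancellations are forced, but they do need to be exhibited. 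A secondary subtlety is the role of the global conjugation $P\in SL_2(\complessi)$ needed to put $N_\infty$ into the normalized diagonal form $\exp(i\pi\theta_\infty\sigma_3)$ appearing in \eqref{eq:Mmp6}; since the $P_3$-action on $\widehat{\mathcal{M}}_{PVI}$ is defined on conjugacy classes this is harmless, but I would state it explicitly so that the ``$N$'' triples are understood up to simultaneous conjugation, consistently with the equivalence $\sim$ in \eqref{eq:Mmg2} and \eqref{eq:Mmp6}.
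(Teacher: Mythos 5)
Your plan is correct in substance and structurally parallel to the paper's proof: the paper also disposes of the isomorphism $H_i\cong P_3$ by saying the generators satisfy the relations \eqref{puregensrelations} for $n=3$ (direct computation), and also verifies the action explicitly only for one case ($H_1$), declaring the others analogous. The one genuine difference is in how the action is verified. The paper does not expand the pure braid words $\beta_{jk}$ at all: it observes that the generators of $H_1$ are words in $\sigma_2,\sigma_3$ alone, and that $\sigma_2,\sigma_3$ themselves act on the triple $(M_2,M_3,M_4)$, fixing $M_1$, exactly as the two generators of $B_3$ act on $(N_1,N_2,N_3)$ (see \eqref{eq:ident}); hence every element of $H_1$, in particular its three generators, automatically restricts to the corresponding $P_3$ element. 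Your generator-by-generator expansion of $\sigma_3^{-1}\sigma_2^2\sigma_3$ etc.\ is longer but works, and in fact it is the more honest route for $H_2$ and $H_3$, where the paper's ``similar proof'' needs a tweak because those subgroups are not contained in $\langle\sigma_1,\sigma_2\rangle$ or $\langle\sigma_2,\sigma_3\rangle$.

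Two statements in your write-up should be corrected, though neither derails the computation you propose. First, the claim that the generators of each $H_i$ ``never move the strand with the missing index, hence the corresponding matrix stays fixed'' is true for $H_1$ and $H_4$ but false for $H_2$ and $H_3$: for instance $\beta_{31}=\sigma_2^{-1}\sigma_1^2\sigma_2$ sends $M_2$ to $KM_2K^{-1}$ with $K=M_3M_1M_3^{-1}M_1^{-1}$, while sending $(M_1,M_3,M_4)\mapsto\bigl(M_3M_1M_3^{-1},\,(M_3M_1)M_3(M_3M_1)^{-1},\,M_4\bigr)$; what actually matters, and what your explicit expansion would show, is that the images of the three relevant matrices involve only those three matrices and reproduce the standard $P_3$ formulas (so the trace coordinates transform correctly), not that the fourth matrix is literally fixed. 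Second, the copy of $P_3$ generated by the $\beta_{jk}$ with indices in a three-element subset is the image of a section of the Fadell--Neuwirth fibration (the standard inclusion $P_3\hookrightarrow P_4$), not its fibre: the fibre is a free group of rank three. With those two points repaired, your argument goes through and matches the paper's conclusion.
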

\begin{proof}
To prove that  each $H_i$, $i=1,\dots,4$ is isomorphic to $P_3$ we need to prove that its generators satisfy the relations 
\eqref{puregensrelations}, for $n=3$. This can be checked by direct computations. 

We now prove the second statement explicitly for the subgroup $H_1$, for the other subgroups a similar proof applies. First of all, thanks to \eqref{eq:cyclic} we have immediately:
\begin{equation*}
\widehat{N}_\infty \widehat{N}_3 \widehat{N}_2 \widehat{N}_1 = \mathbb{I},
\end{equation*}
so that   $\widehat{n}=(\widehat{N}_{1},\widehat{N}_{2},\widehat{N}_{3})\in\widehat{\mathcal{M}}_{PVI}$. 
To show that the subgroup $H_1$ acts as pure braid group $P_3$ on $\widehat{\mathcal{M}}_{PVI}$  we use the fact that the generators of $H_1$ are defined in terms of generators $\sigma_2$ and $\sigma_3$ of the full braid group $B_4$, so it is enough to prove that $\sigma_2$ and $\sigma_3$ act as generators of the braid group $B_3$ on  $\widehat{\mathcal{M}}_{PVI}$. Consider \eqref{ihatn}, then the following relations hold:
\begin{align}
\sigma_2(m) = (M_1,M_3,M_3 M_2 M_3^{-1} ,M_4)= (\widehat{N}_{2},\widehat{N}_{2}\widehat{N}_{1}\widehat{N}_{2}^{-1},\widehat{N}_{3}) &= \sigma^{(PVI)}_1(\widehat{n}),\nonumber\\
\sigma_3(m) = (M_1,M_2,M_4,M_4 M_3 M_4^{-1} )= (\widehat{N}_{1},\widehat{N}_{3},\widehat{N}_{3}\widehat{N}_{2}\widehat{N}_{3}^{-1}) &= \sigma^{(PVI)}_2(\widehat{n}).\label{eq:ident}
\end{align}
This concludes the proof.
\end{proof}

We now consider the action of the subgroups $H_i$ for $i=1,\dots,4$ in terms of co-adjoint coordinates on $\widehat{\mathcal{M}}_{PVI}$:
\begin{align}
&\widehat{q}:=(\widehat{q}_1,\widehat{q}_2,\widehat{q}_3,\widehat{q}_{\infty},\widehat{q}_{21},\widehat{q}_{31},\widehat{q}_{32}),\quad
\widebar{q}:=(\widebar{q}_1,\widebar{q}_2,\widebar{q}_3,\widebar{q}_{\infty},\widebar{q}_{21},\widebar{q}_{31},\widebar{q}_{32}),\nonumber\\
&\widecheck{q}:=(\widecheck{q}_1,\widecheck{q}_2,\widecheck{q}_3,\widecheck{q}_{\infty},\widecheck{q}_{21},\widecheck{q}_{31},\widecheck{q}_{32}),
\quad
\widetilde{q}:=(\widetilde{q}_1,\widetilde{q}_2,\widetilde{q}_3,\widetilde{q}_{\infty},\widetilde{q}_{21},\widetilde{q}_{31},\widetilde{q}_{32}),
\label{eq:subq}\end{align}
where $\widehat{q}_i = \Tr \widehat{N}_i$ for $i=1,2,3,\infty$ and $\widehat{q}_{j k} = \Tr \widehat{N}_j\widehat{N}_k$ for $j>k,~j,k=1,2,3$ and similar formulae for $\widebar q$, 
$\widecheck{q}$ and $\widetilde{q}$.
Then identifications \eqref{ihatn}-\eqref{itilden} imply the identities summarized in Table \ref{tb:matching}, 
where $p_i$,$p_{ij}$,$p_{ijk}$ are defined in \eqref{eq:pidef} and elements in the same column are identical.
\begin{table}[!ht]
\resizebox{\textwidth}{!}{
\begin{tabular}{ | c || c | c | c | c | c | c | c | c | c | c | c | c | c | c | c |}
\hline
\rule{0pt}{2.5ex}\rule[-1.2ex]{0pt}{0pt} & $p_1$ & $p_2$ & $p_3$ & $p_4$ & $p_{\infty}$ & $p_{21}$ & $p_{31}$ & $p_{32}$ & $p_{41}$ & $p_{42}$ & $p_{43}$ & $p_{321}$ & $p_{432}$ & $p_{431}$ & $p_{421}$\\
\hline\hline
\rule{0pt}{2.5ex}\rule[-1.2ex]{0pt}{0pt} $H_1$ & & $\widehat{q}_1$ &  $\widehat{q}_2$  &$\widehat{q}_3$   &   &   &   & $\widehat{q}_{21}$  &   &$\widehat{q}_{31}$   &$\widehat{q}_{32}$   &   & $\widehat{q}_{\infty}$   &  &  \\
\hline
\rule{0pt}{2.5ex}\rule[-1.2ex]{0pt}{0pt} $H_2$ & $\widebar{q}_1$  &  & $\widebar{q}_2$  & $\widebar{q}_3$  &   &   & $\widebar{q}_{21}$  &   & $\widebar{q}_{31}$  &   & $\widebar{q}_{32}$  &   &   & $\widebar{q}_{\infty}$  &  \\
\hline
\rule{0pt}{2.5ex}\rule[-1.2ex]{0pt}{0pt} $H_3$ & $\widecheck{q}_1$ &$\widecheck{q}_2$ &   &$\widecheck{q}_3$   &   & $\widecheck{q}_{21}$  &   &   & $\widecheck{q}_{31}$  &$\widecheck{q}_{32}$   &   &   &   &   & $\widecheck{q}_{\infty}$ \\
\hline
\rule{0pt}{2.5ex}\rule[-1.2ex]{0pt}{0pt} $H_4$ & $\widetilde{q}_1$ &$\widetilde{q}_2$ &$\widetilde{q}_3$   &&   &$\widetilde{q}_{21}$   &$\widetilde{q}_{31}$   &$\widetilde{q}_{32}$   &   &   &   & $\widetilde{q}_{\infty}$  &   &   &    \\
\hline
\end{tabular}
}\\ 
\caption {Matching using traces: elements on the same columns must be equal.}\label{tb:matching}
\end{table}

We define the following four projections:
\begin{equation}
\widetilde{\pi},\widehat{\pi},\widecheck{\pi},\widebar{\pi} : \mathcal A \mapsto \widehat{\mathcal{M}}_{PVI},
\label{projections}\end{equation}
as follows
\begin{equation}\label{projq}
\begin{split}
\widetilde{\pi}(p) &:= (p_1,p_2,p_3,p_{321},p_{21},p_{31},p_{32}) = \widetilde{q},\\
\widehat{\pi}(p) &:= (p_2,p_3,p_4,p_{432},p_{32},p_{42},p_{43}) = \widehat{q},\\
\widecheck{\pi}(p) &:= (p_1,p_2,p_4,p_{421},p_{21},p_{41},p_{42}) = \widecheck{q},\\
\widebar{\pi}(p) &:= (p_1,p_3,p_4,p_{431},p_{31},p_{41},p_{43}) = \widebar{q}.
\end{split}
\end{equation}
Viceversa, given four $7$-ples  $\widetilde{q}$,$\widehat{q}$,$\widecheck{q}$,$\widebar{q}$, such that they satisfy the equalities in the columns of Table \ref{tb:matching}, we can lift them to a point $p\in\mathcal A$, in which the value of $p_\infty$ can be recovered using relation \eqref{eq:f5}. We call this {\it matching procedure.}

\section{Input set}

The classification result by Lisovyy and Tykhyy produced a list of all finite orbits under the action of  the braid group $B_3$ modulo the action of the group ${F_4}$ of Okamoto transformations acting on \mp6. However, points $q$ that are equivalent modulo the action of the group ${F_4}$ of Okamoto transformations, and of the pure braid group $P_3$, don't necessarily produce candidate points $p$ that are equivalent modulo the action of the symmetry group $\text{G}$ of the Garnier system $\mathcal G_2$ nor by the action of the pure braid group $P_4$. Therefore, we need to expand the list of input points $q$ by considering all images under  ${F_4}$ and  $P_3$. In this section, we define an {expansion algorithm} that applies  the action of ${F_4}$ and  $P_3$ to the $45$ exceptional orbits of \cite{LT}. Thanks to the fact that 
the action of ${F_4}$ over these $45$ finite orbits is finite,
the result is a finite set that we call $E_{45}$. This set does not include points that correspond to solutions of Okamoto type nor solutions corresponding to Picard or  Kitaev orbits - we will deal with these points in Section  \ref{se:mp}.

\subsection{The classification result by Lisovyy and Tykhyy}
In order to expand Lisovyy and Tykhyy  list of $45$  finite orbits (see Table 5 in \cite{LT})   it is best to introduce the following quantities:
\begin{equation}
\begin{array}{ll}
\omega_{1} := q_{1} q_{\infty} + q_{3} q_{2},\qquad
\omega_{2} := q_{2} q_{\infty} + q_{3} q_{1},\qquad
\omega_{3} := q_{3} q_{\infty} + q_{2} q_{1},\\
\omega_{4} := q_{3}^2 + q_{2}^2 + q_{1}^2 + q_{\infty}^2 + q_{3} q_{2} q_{1} q_{\infty}.
\end{array}
\label{omegasPVI}\end{equation}

The group  ${F_4}$ of Okamoto transformations of the sixth Painlev\'e equation acts as $K_4\rtimes S_3$ on $(\omega_1,\dots,\omega_4)$ \cite{LT}. 
Extending this action to the $q_{ij}$s, namely acting on $(\omega_1,\dots,\omega_4,q_{21},q_{31},q_{32})$ it is straightforward to prove the following:
\begin{proposition}\label{thm:oktransf}
The group $F_4$ of the Okamoto transformations of the sixth Painlev\'e equation is generated by the following transformations that act on $(\omega_1,\dots,\omega_4,q_{21},q_{31},q_{32})$ as follows:
\begin{align}
&s_i (q_{21},q_{31},q_{32},\omega_1,\omega_2,\omega_3,\omega_4) = (q_{21},q_{31},q_{32},\omega_1,\omega_2,\omega_3,\omega_4),~i=1,2,3,\infty,\delta ,\nonumber\\
&r_1 (q_{21},q_{31},q_{32},\omega_1,\omega_2,\omega_3,\omega_4)= (-q_{21},-q_{31},q_{32},\omega_1,-\omega_2,-\omega_3,\omega_4) ,\nonumber\\
&r_2 (q_{21},q_{31},q_{32},\omega_1,\omega_2,\omega_3,\omega_4) = (-q_{21},q_{31},-q_{32},-\omega_1,\omega_2,-\omega_3,\omega_4) ,\nonumber\\
&r_3 (q_{21},q_{31},q_{32},\omega_1,\omega_2,\omega_3,\omega_4) = (q_{21},-q_{31},-q_{32},-\omega_1,-\omega_2,\omega_3,\omega_4) ,\nonumber\\
&P_{13} (q_{21},q_{31},q_{32},\omega_1,\omega_2,\omega_3,\omega_4)= (q_{32},\omega_2 - q_{31} - q_{21}q_{32},q_{21},\omega_3,\omega_2,\omega_1,\omega_4),\nonumber\\
&P_{23} (q_{21},q_{31},q_{32},\omega_1,\omega_2,\omega_3,\omega_4) = (\omega_2 - q_{31} - q_{21}q_{32},q_{21},q_{32},\omega_1,\omega_3,\omega_2,\omega_4).\nonumber
\end{align}
\end{proposition}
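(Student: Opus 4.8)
The plan is to recall the standard description of the group $F_4$ of Okamoto transformations acting on the affine root system of type $D_4$ (as in Okamoto's original work and as used by Lisovyy and Tykhyy), and then to transport that action through the well-known trace-coordinate dictionary to the quantities $(\omega_1,\dots,\omega_4,q_{21},q_{31},q_{32})$. Concretely, $F_4 \cong K_4 \rtimes S_3$, where $K_4 \cong (\mathbb{Z}/2)^2$ is generated by the sign-change reflections $r_1,r_2,r_3$ (with $r_1 r_2 r_3 = \mathrm{id}$ in the quotient, i.e. only two are independent) and $S_3$ is generated by two of the permutations $P_{13},P_{23}$; the remaining generators $s_i$ for $i = 1,2,3,\infty,\delta$ are the elementary affine Weyl reflections that move the $\theta$-parameters but fix the monodromy traces, hence act trivially on our coordinates. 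The first step is therefore simply to state that the $s_i$ act as the identity on $(\omega_1,\dots,\omega_4,q_{21},q_{31},q_{32})$ — this is immediate since these coordinates depend only on the $q_i,q_\infty$ through the $\omega_k$, which Okamoto's $s_i$ fix.

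Next I would verify the $r_j$ formulas. Each $r_j$ is the composition of two sign changes $\theta_a \mapsto -\theta_a$; on the level of monodromy this corresponds to conjugating two of the matrices $N_a$ by the nontrivial element of a fixed maximal torus (or equivalently replacing two of them by their "signed" counterparts in $\mathrm{GL}_2$ before projectivising), which has the net effect of flipping the sign of two of the traces $q_{ij}$ and correspondingly two of the $\omega_k$. The pattern is dictated by which pair of indices is involved: $r_1$ flips the signs of $q_{21},q_{31}$ and of $\omega_2,\omega_3$; $r_2$ flips $q_{21},q_{32}$ and $\omega_1,\omega_3$; $r_3$ flips $q_{31},q_{32}$ and $\omega_1,\omega_2$. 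One checks directly from the definitions \eqref{omegasPVI} that these sign patterns are consistent: e.g. under $r_1$, $\omega_4 = q_3^2+q_2^2+q_1^2+q_\infty^2+q_3q_2q_1q_\infty$ is manifestly invariant (the monomial $q_3q_2q_1q_\infty$ picks up no sign because the $r_j$ act on the $q_i$ only through an even number of sign flips of the underlying $\theta$'s), while the mixed terms $\omega_1 = q_1 q_\infty + q_3 q_2$, etc., transform with the stated signs.

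Finally I would verify the two permutation generators $P_{13},P_{23}$. These come from the natural $S_3$ action permuting the three finite singular points $0,1,\infty$ (equivalently the three finite loops), which permutes $q_1,q_2,q_3$ while fixing $q_\infty$; on the $\omega_k$ this is the corresponding permutation of $\omega_1,\omega_2,\omega_3$ fixing $\omega_4$, which matches the stated action ($P_{13}$ swapping $\omega_1 \leftrightarrow \omega_3$, $P_{23}$ swapping $\omega_2 \leftrightarrow \omega_3$). The only genuinely non-trivial point is the induced action on $q_{21},q_{31},q_{32}$: a transposition of singular points does not simply permute the pair-traces, because the loop basis gets reordered and one must re-express $\mathrm{Tr}(N_iN_j)$ for the permuted configuration in terms of the original coordinates. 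This is where the Fricke/skein relation $\Tr AB + \Tr A^{-1}B = \Tr A \Tr B$ enters: using it together with $N_\infty N_3 N_2 N_1 = \mathbb{I}$ one gets the linear identity $q_{31} + q_{21}q_{32} = \omega_2$ (and its cyclic analogues), so that the "missing" trace after a permutation can always be solved for, producing the term $\omega_2 - q_{31} - q_{21}q_{32}$ appearing in both $P_{13}$ and $P_{23}$. I expect this bookkeeping — getting the $q_{ij}$ transformation laws exactly right under the two permutations, including checking that $P_{13}^2 = P_{23}^2 = \mathrm{id}$ and that $P_{13},P_{23}$ together with the $r_j$ really generate a copy of $K_4 \rtimes S_3$ — to be the main obstacle; everything else is a direct substitution into \eqref{omegasPVI}. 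Since the formulas and the group structure are already established in \cite{LT}, the proof reduces to this finite check, which is "straightforward" in exactly the sense claimed in the statement.
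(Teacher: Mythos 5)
Your proposal re-derives the table by explicit verification, whereas the paper's own proof is a one-line appeal to \cite{inabaiwasakisaito} and \cite{MazzoccoCanonical}, where the action of Okamoto's transformations on the monodromy representation is computed through the Riemann--Hilbert correspondence. Much of your bookkeeping is sound: the sign patterns for $r_1,r_2,r_3$ do drop out of \eqref{omegasPVI} once one knows that each $r_j$ is realized on monodromy by negating a pair of the matrices $N_1,N_2,N_3$ (equivalently one of them together with $N_\infty$), and your use of the skein relation together with $N_\infty N_3N_2N_1=\mathbb{I}$ is exactly how one checks the entry $\omega_2-q_{31}-q_{21}q_{32}$ in $P_{13},P_{23}$ (these are braids, as the paper itself notes right after the Proposition, so that part is a finite computation).

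There are, however, two genuine gaps. First, your stated mechanism for the $r_j$ cannot work: composing sign changes $\theta_a\mapsto-\theta_a$ leaves the monodromy representation untouched, and conjugating matrices by an element of a maximal torus never changes any trace, so neither operation can flip the signs of $q_{21},q_{31},\omega_2,\omega_3$. The correct realization is the sign flip $N_a\mapsto -N_a$, i.e.\ a Schlesinger-type shift $\theta_a\mapsto\theta_a\pm1$, exactly analogous to $\mathrm{sign}_1,\dots,\mathrm{sign}_4$ used for the Garnier case in the Appendix; with that substitution your sign bookkeeping becomes correct. Second, and more seriously, the only deep claim in the Proposition --- that $s_\delta$, which genuinely changes the $\theta_i$ and hence the individual $q_i=2\cos\pi\theta_i$, nevertheless fixes $q_{21},q_{31},q_{32}$ and each $\omega_k$ --- is dismissed as ``immediate.'' It is not: $q_{21},q_{31},q_{32}$ are independent coordinates, not functions of the $q_i,q_\infty$, and the invariance of the pair traces and of the $\omega_k$ under the shift-by-$\delta$ transformation is precisely the Riemann--Hilbert computation of \cite{inabaiwasakisaito} and \cite{MazzoccoCanonical} (equivalently the statement quoted in \cite{LT}); this is the entire content of the paper's citation-proof. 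Unless you explicitly import that result, your argument has a hole exactly where the substance of the Proposition lies; for the easy generators $s_1,s_2,s_3,s_\infty$ the correct (and genuinely immediate) reason is simply that flipping the sign of a single $\theta_a$ does not change the monodromy representation at all.
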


\proof The proof of this is a consequence of the results of \cite{inabaiwasakisaito} and \cite{MazzoccoCanonical}.\endproof

In particular we observe that  $P_{13}$ and $P_{23}$ are elements of the braid group $B_3$ - since we act only on points that have finite orbits under the action of the braid group, the action of the whole group $F_4$ produces a finite set of values. 
All these values will be in the form $(\omega_1,\dots,\omega_4,q_{21},q_{31},q_{32})$;  in order to extract $q_1,q_2,q_3$ and $q_\infty$ we use the fact that we can consider the 
relations  \eqref{omegasPVI}  as a system of equations in $q_1,q_2,q_3$ and $q_\infty$ and that each $q_i$ has the form:
$$
q_i=2\cos\pi\theta_i,\quad i=1,2,3,\infty.
$$
One particular solution of  equations  \eqref{omegasPVI}  is listed in \cite{LT} in terms of $\theta_1,\theta_2,\theta_3,\theta_\infty$ for each point in the Table 
5 in \cite{LT}. We can then compute all other solutions $q_1,q_2,q_3$ and $q_\infty$ by using the following:

\begin{lemma}\label{lm:3}
Suppose $\omega_1,\omega_2,\omega_3,\omega_4$ are given and consider system \eqref{omegasPVI} in the variables $q_1,q_2,q_3,q_\infty$, then this system admits at most $24$ solutions. Any two such solutions are related by the following elements of $F_4$:
\begin{eqnarray}
&& id,\qquad
 \alpha,\qquad
 \beta,\qquad  \gamma,\qquad  \alpha\cdot\beta,\qquad \alpha\cdot\gamma,\qquad  \beta\cdot\gamma,\nonumber\\
&& \alpha \cdot\beta\cdot \gamma\qquad
 s_\delta,\qquad
 \alpha\cdot s_\delta,\qquad
 \beta\cdot s_\delta,\qquad
 \gamma\cdot s_\delta,\qquad
 \alpha \cdot\beta \cdot s_\delta,\nonumber\\ 
&& \alpha\cdot \gamma\cdot s_\delta,\qquad
\beta\cdot \gamma\cdot s_\delta,\qquad \alpha \cdot\beta\cdot \gamma\cdot s_\delta,\qquad
s_\delta s_1,\qquad  \alpha\cdot s_\delta\cdot s_1,\nonumber\\
&&
 \beta\cdot s_\delta\cdot s_1,\qquad \gamma\cdot s_\delta\cdot s_1
 \qquad \alpha \cdot\beta\cdot s_\delta\cdot s_1,\quad
 \alpha \cdot\gamma\cdot s_\delta \cdot s_1,\nonumber\\
&&
 \beta\cdot\gamma\cdot s_\delta\cdot s_1,\qquad \alpha \cdot\beta\cdot\gamma\cdot s_\delta\cdot s_1.\nonumber
\end{eqnarray}
where $\alpha$, $\beta$, $\gamma$, $s_\delta$, $s_1$ act as follows on the parameters $\theta_i$:
$$
\alpha(\theta_1,\theta_2,\theta_3,\theta_\infty)=(1+\theta_1,1+\theta_2,1+\theta_3,1+\theta_\infty)
$$
$$
\beta(\theta_1,\theta_2,\theta_3,\theta_\infty)=(\theta_2,\theta_1,\theta_\infty-2,\theta_3),\qquad
\gamma(\theta_1,\theta_2,\theta_3,\theta_\infty)=(\theta_3,\theta_\infty-2,\theta_1,\theta_2)
$$
$$
s_\delta(\theta_1,\theta_2,\theta_3,\theta_\infty)=(\theta_1-\delta,\theta_2-\delta,\theta_3-\delta,\theta_\infty-\delta),\quad \delta=\frac{\theta_1+\theta_2+\theta_3+\theta_\infty}{2},
$$
$$
s_1(\theta_1,\theta_2,\theta_3,\theta_\infty)=(-\theta_1,\theta_2,\theta_3,\theta_\infty).
$$
\end{lemma}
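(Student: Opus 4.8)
The plan is to treat the system \eqref{omegasPVI} not as a generic polynomial system but via the representation-theoretic meaning of the $\omega_i$: for a triple $(N_1,N_2,N_3)\in\widehat{\mathcal M}_{PVI}$ with $q_i=\Tr N_i$, $q_\infty=\Tr N_\infty$ and $q_{jk}=\Tr N_jN_k$, the quantities $\omega_1,\dots,\omega_4$ are precisely the coefficients of the Fricke relation and of the affine cubic surface cutting out $\widehat{\mathcal M}_{PVI}$; they are invariant under the Okamoto group $F_4$ acting as $K_4\rtimes S_3$ on $(\omega_1,\dots,\omega_4)$ as recorded in Proposition \ref{thm:oktransf}. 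So first I would record that, once $q_{21},q_{31},q_{32}$ are given (which we may assume fixed here, since Lisovyy--Tykhyy's list specifies them), the relations \eqref{omegasPVI} become four polynomial equations in the four unknowns $q_1,q_2,q_3,q_\infty$, three of them bilinear and one of the form $q_1^2+q_2^2+q_3^2+q_\infty^2+q_1q_2q_3q_\infty=\omega_4$.

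Next I would bound the number of solutions. The three bilinear equations $\omega_1=q_1q_\infty+q_2q_3$, $\omega_2=q_2q_\infty+q_1q_3$, $\omega_3=q_3q_\infty+q_1q_2$ can be viewed, for fixed $q_\infty$, as a linear system in $(q_1,q_2,q_3)$: indeed $\omega_1-q_2q_3=q_1q_\infty$, etc.; eliminating $q_1,q_2,q_3$ one expresses them rationally in $q_\infty$ and substitutes into the quartic $\omega_4$-equation, obtaining a single polynomial equation in $q_\infty$ of bounded degree. A Bézout/resultant count then gives the bound $24$; alternatively, and more in the spirit of the paper, I would argue structurally: the parametrization $q_i=2\cos\pi\theta_i$ converts \eqref{omegasPVI} into the statement that $(\theta_1,\theta_2,\theta_3,\theta_\infty)$ and any other solution $(\theta_1',\dots,\theta_\infty')$ give the same values of the four symmetric functions $\omega_1,\dots,\omega_4$, and the set of such $\theta$ modulo the obvious symmetries is exactly an orbit of a finite group acting on $\mathbb C^4/(\text{lattice})$. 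The group generated by $\alpha,\beta,\gamma,s_\delta,s_1$ is a specific finite quotient of $F_4$ (the stabilizer in $F_4\ltimes(\text{shifts})$ of the tuple $(\omega_1,\dots,\omega_4)$ modulo the Weyl symmetries $s_i$ that act trivially on the $\omega$'s), and one checks it has order $24$: $\langle\alpha,\beta,\gamma\rangle$ contributes a factor $8=|K_4|\cdot 2$ after reduction, $s_\delta$ doubles this to $16$, and $s_1$ brings in the remaining sign flips giving $24$.

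The verification step is then purely mechanical: for each of the $24$ listed words $w$ in $\{\alpha,\beta,\gamma,s_\delta,s_1\}$ I would confirm, using the explicit action on $(\theta_1,\theta_2,\theta_3,\theta_\infty)$ given in the statement together with $q_i=2\cos\pi\theta_i$ and the $F_4$-invariance of $\omega_1,\dots,\omega_4$ from Proposition \ref{thm:oktransf}, that $w$ sends any solution of \eqref{omegasPVI} to another solution; conversely, that the $24$ images of a generic solution are distinct, so that no solution is omitted. Distinctness follows by evaluating the $\theta$-tuples symbolically: the shifts $\alpha$, the permutation-type maps $\beta,\gamma$, the affine reflection $s_\delta$ and the sign change $s_1$ generate a group of the asserted order because one can exhibit $24$ distinct cosets by their effect on, say, $(\theta_1+\theta_2+\theta_3+\theta_\infty)$ and on the multiset $\{\pm\theta_i\}$. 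Since \eqref{omegasPVI} has at most $24$ solutions and we have produced $24$ generically distinct ones, the $F_4$-elements listed exhaust all relations between solutions.

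The main obstacle I anticipate is not the count itself but the bookkeeping of \emph{which} finite subgroup of $F_4$ appears and why its order is exactly $24$ rather than a divisor or small multiple: the maps $\alpha$, $s_\delta$, $s_1$ act on the $\theta$'s but not as honest elements of the finite quotient $K_4\rtimes S_3$ that acts on $(\omega_1,\dots,\omega_4)$ — they are shifts and sign flips that preserve the fibers of $\theta\mapsto q=2\cos\pi\theta$ only up to the extra symmetries, so one must be careful to work in the correct ambient group (affine Weyl group of type $F_4$, or rather its relevant quotient) and identify the fiber-stabilizer correctly. Keeping track of the semidirect-product structure, and of the fact that the $s_i$ act trivially on all four $\omega$'s while the $P_{ij}$ permute $\omega_1,\omega_2,\omega_3$, is where the argument needs the most care; the rest reduces to a finite, if tedious, symbolic check.
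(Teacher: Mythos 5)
The paper's own proof of this lemma is a one-line citation (``immediate consequence of Proposition 10 in \cite{LT}''), so your attempt to prove it from scratch is more ambitious than what the paper does; unfortunately the two quantitative steps that would make it self-contained are both gapped. First, the bound of $24$ solutions is not actually established. The three equations $\omega_1=q_1q_\infty+q_2q_3$, $\omega_2=q_2q_\infty+q_1q_3$, $\omega_3=q_3q_\infty+q_1q_2$ are \emph{not} a linear system in $(q_1,q_2,q_3)$ for fixed $q_\infty$: each one contains a product $q_jq_k$ of two of the unknowns you propose to solve for linearly. One can eliminate by other means (e.g.\ $\omega_1-\omega_2=(q_1-q_2)(q_\infty-q_3)$ and $\omega_1+\omega_2=(q_1+q_2)(q_\infty+q_3)$ express $q_1,q_2$ rationally in $q_3,q_\infty$), but then the degree count has to be carried out: the naive B\'ezout bound for degrees $2,2,2,4$ is $32$, not $24$, so ``a B\'ezout/resultant count'' does not by itself give the claimed bound --- you must track the solutions at infinity or do the elimination explicitly.

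Second, the group-theoretic half is asserted rather than proved. The $24$ listed words are the eight products of subsets of $\{\alpha,\beta,\gamma\}$ times the three representatives $id$, $s_\delta$, $s_\delta s_1$; your accounting ``$8$, doubled by $s_\delta$ to $16$, then $s_1$ gives $24$'' is internally inconsistent ($24$ is not a multiple of $16$), and, more to the point, the claim that these $24$ elements exhaust the stabilizer of $(\omega_1,\dots,\omega_4)$ modulo the transformations of the $\theta$'s that act trivially on $q_i=2\cos\pi\theta_i$ is precisely the content of Proposition 10 of \cite{LT} --- it does not follow from Proposition \ref{thm:oktransf} alone. Finally, your exhaustion step (at most $24$ solutions, $24$ generically distinct images, hence every solution is in the list) only covers generic $\omega$: when images coalesce for special $\omega$, the assertion that \emph{any} two solutions of \eqref{omegasPVI} are related by a listed element needs a separate argument (a specialization/limiting argument, or the explicit parametrization of solutions in \cite{LT}). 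So either carry out the elimination and the stabilizer computation in full, or do as the paper does and simply invoke Proposition 10 of \cite{LT}.
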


\proof It is an immediate consequence of Proposition 10 in \cite{LT}. \endproof

This lemma allows us to calculate all the solutions of the system \eqref{omegasPVI} in terms of the given  
$\omega_1,\omega_2,\omega_3,\omega_4$ starting from only one solution $q_1,q_2,q_3$ and $q_\infty$. We are therefore able to set up our expansion algorithm:

\begin{al}\label{al:004}\quad For every line of Table 5 in \cite{LT}, take the values $(\omega_1,\dots,\omega_4,$ $q_{21},q_{31},q_{32})$ and the corresponding $(q_1,q_2,q_3,q_\infty)$ given in \cite{LT}.

\begin{enumerate}
\item Apply to $(\omega_1,\dots,\omega_4,q_{21},q_{31},q_{32})$ all 48 transformations of the group $K_4\rtimes S_3$. For each new set of values $(\omega_1',\dots,\omega_4',q_{21}',q_{31}',q_{32}')$  obtained in this way, compute the corresponding  $(q_1',\dots,q_\infty')$ as the result of the same transformation on $(q_1,q_2,q_3,q_\infty)$.
\item For every element $(\omega_1',\dots,\omega_4',q_{21}',q_{31}',q_{32}')$ obtained in step 1, generate their orbit under the action of the braid group $B_3$. For each new set of values $(\omega_1'',\dots,\omega_4'',q_{21}'',q_{31}'',q_{32}'')$  obtained in this way, compute the corresponding  $(q_1'',\dots,q_\infty'')$ as the result of the same braid on $(q_1',q_2',q_3',q_\infty')$.
\item  For every element $(\omega_1'',\dots,\omega_4'',q_{21}'',q_{31}'',q_{32}'')$ and $(q_1'',\dots,q_\infty'')$ obtained in step 2,
find all other solutions $(q_1''',q_2''',q_3''',q_\infty''')$ of the system \eqref{omegasPVI} for $(\omega_1'',\dots,\omega_4'')$ by applying the transformations in Lemma \ref{lm:3}  to $(q_1'',\dots,q_\infty'')$.
\item Merge $(q_1''',q_2''',q_3''',q_\infty'')$ and $(q_{21}'',q_{31}'',q_{32}'')$ into:
$$q'''=(q_1''',q_2''',q_3''',q_\infty''',q_{21}'',q_{31}'',q_{32}'').$$
\item Generate the $P_3$-orbit of $q'''$ and save the result in the set $\text{E}_{45}$
\end{enumerate}
\end{al}
Once this algorithm ends, the set $\text{E}_{45}$ will contain only a finite number of orbits. This set contains $86,768$ points.

\section{Matching procedure}\label{se:mp}
In this Section, we propose a procedure to construct all \textit{candidate} points $p \in \mathcal A$: 

\begin{definition}
A point $p$ such that its four projections 
$\widehat{q},\widecheck{q},\widebar{q},\widetilde{q}$, defined in \eqref{projq}, generate finite orbits under the action of $P_3$ and such that at most one projections is a Picard or Kitaev orbit, is said to be a \textit{candidate} point. 
\end{definition}

Note that, to generate a \textit{candidate} point $p$, it is not necessary to know all four projections $\widehat{q},\widecheck{q},\widebar{q},\widetilde{q}$. Indeed, looking at Table \ref{tb:matching}, we see that if we give three projections, then only the value of $p_\infty$ and one value $p_{ijk}$ will be undetermined, but we can calculate these values from  \eqref{eq:f1} and by choosing appropriately one of the four relations $f_{1},\dots,f_{4}$, defined in \eqref{eq:f321}-\eqref{eq:f432} respectively. 
So, in order to obtain the set $\mathcal{C}$ of all \textit{candidate} points, we can set up three matching procedures, each of them based on the knowledge of only $3$ projections. We denote by $\widetilde C$,  $\widehat{C}$, $\widecheck{C}$ and $\widebar{C}$ the sets obtained by matching three projections and missing $\widetilde{q}$, $\widehat{q}$, $\widecheck{q}$ or $\widetilde{q}$ respectively.

In order to construct the set $\mathcal{C}$, the union of all the above four sets $\widetilde{\mathcal{C}},\widehat{\mathcal{C}},\widecheck{\mathcal{C}},\widebar{\mathcal{C}}$ must be taken:
\begin{equation}
\mathcal{C} = \widetilde{\mathcal{C}}\cup \widehat{\mathcal{C}}\cup \widecheck{\mathcal{C}}\cup \widebar{\mathcal{C}}.
\label{bigset}\end{equation}
As we are going to show in the next Lemma, it is enough to know only one of the sets $\widetilde{\mathcal{C}},\widehat{\mathcal{C}},\widecheck{\mathcal{C}},\widebar{\mathcal{C}}$ to generate the whole set $\mathcal{C}$:

\begin{lemma}\label{lm:pperms}
Consider $m \in \mathcal U$ and the permutation $\pi_{(1 2 3 4)}$ that acts
on the co-adjoint coordinates of $m$ as follows:
\begin{equation*}
\pi_{(1 2 3 4)}(p)= (p_4,p_1,p_2,p_3,p_\infty,p_{41},p_{42},p_{21},p_{43},p_{31},p_{32},p_{421},p_{321},p_{432},p_{431}),\\
\end{equation*} 
then:
\begin{equation}
\pi_{(1 2 3 4)}(\widetilde{\mathcal{C}})=\widecheck{\mathcal{C}},\quad
\pi_{(1 2 3 4)}(\widecheck{\mathcal{C}})=\widebar{\mathcal{C}}, \quad
\pi_{(1 2 3 4)}(\widebar{\mathcal{C}})=\widehat{\mathcal{C}}, \quad
\pi_{(1 2 3 4)}(\widehat{\mathcal{C}})=\widetilde{\mathcal{C}}.\label{eq:afro4}
\end{equation}
\end{lemma}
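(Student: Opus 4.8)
The plan is to prove the four equalities in \eqref{eq:afro4} by tracking how the cyclic permutation $\pi_{(1234)}$ of the four monodromy matrices interacts with the three-projection matching construction. The key observation is that the map $\pi_{(1234)}$ written in co-adjoint coordinates corresponds, at the level of matrices, to the relabelling $(M_1,M_2,M_3,M_4)\mapsto(M_4,M_1,M_2,M_3)$ (one checks directly from \eqref{eq:pidef} that this relabelling sends $p_i\mapsto$ the displayed tuple: $\Tr M_4\to$ slot of $p_1$, $\Tr M_2M_1\to\Tr M_1M_4$ in the slot of $p_{21}$, etc., and that $p_\infty=\Tr M_4M_3M_2M_1$ is invariant because the cyclic product is unchanged up to conjugation by $M_4$). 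Since $\pi_{(1234)}$ is a bijection of $\mathcal U$ (hence of $\mathcal A$) that commutes with the whole braid group action up to the corresponding relabelling of the generators $\sigma_i$, it maps finite $P_4$-orbits to finite $P_4$-orbits.

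First I would make precise the effect of $\pi_{(1234)}$ on the four PVI-projections. Recall from \eqref{projq} that $\widetilde\pi$ uses the triple $(M_1,M_2,M_3)$, $\widehat\pi$ uses $(M_2,M_3,M_4)$, $\widecheck\pi$ uses $(M_1,M_2,M_4)$ and $\widebar\pi$ uses $(M_1,M_3,M_4)$. Under the relabelling $(M_1,M_2,M_3,M_4)\mapsto(M_4,M_1,M_2,M_3)$ the unordered triple $\{M_1,M_2,M_3\}$ becomes $\{M_4,M_1,M_2\}$, i.e.\ the one used by $\widecheck\pi$; similarly $\{M_1,M_2,M_4\}\mapsto\{M_4,M_1,M_3\}$ which is the one used by $\widebar\pi$, $\{M_1,M_3,M_4\}\mapsto\{M_4,M_1,M_2\}$ wait — I would instead simply verify on the level of index sets that $\pi_{(1234)}$ sends the projection data defining $\widetilde{\mathcal C}$ to that defining $\widecheck{\mathcal C}$, then $\widecheck{\mathcal C}\to\widebar{\mathcal C}\to\widehat{\mathcal C}\to\widetilde{\mathcal C}$, reading off the cycle $(\widetilde{\mathcal C}\,\widecheck{\mathcal C}\,\widebar{\mathcal C}\,\widehat{\mathcal C})$ from the cycle $(1\,2\,3\,4)$ acting on the ``missing index'' of each set — $\widetilde{\mathcal C}$ is the set obtained when the projection $\widetilde q$ (built on indices $\{1,2,3\}$, ``missing'' $4$) is the one recomputed, and $\pi_{(1234)}$ sends $4\mapsto 1$, and $\widehat{\mathcal C}$ is the set missing the projection on $\{2,3,4\}$ (``missing'' $1$).

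Next I would argue that matching is equivariant: a point $p$ lies in $\widetilde{\mathcal C}$ iff its three projections $\widehat q,\widecheck q,\widebar q$ are finite $P_3$-orbit points with at most one Picard/Kitaev, and $p_\infty$ together with the remaining $p_{ijk}$ is reconstructed via \eqref{eq:f1} and one of $f_1,\dots,f_4$. Since $\pi_{(1234)}$ permutes the three relevant projections among themselves (sending the defining triple for $\widetilde{\mathcal C}$ to that for $\widecheck{\mathcal C}$), and since the property ``finite $P_3$-orbit with at most one Picard/Kitaev projection'' is invariant under relabelling matrices (Theorem \ref{thm:match} identifies each $H_i$-action with the $P_3$-action on the corresponding triple, and permuting the triple is an automorphism compatible with this identification), and since the reconstruction relations $f_5$ and $f_1,\dots,f_4$ are themselves permuted consistently by $\pi_{(1234)}$ (they are the trace relations \eqref{eq:f321}–\eqref{eq:f1}, symmetric under relabelling), the image $\pi_{(1234)}(\widetilde{\mathcal C})$ is exactly the set of candidate points reconstructed from the projections on indices obtained by applying $(1234)$, i.e.\ $\widecheck{\mathcal C}$. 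The same argument with the powers of $\pi_{(1234)}$ gives the remaining three identities.

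The main obstacle is purely bookkeeping: one must check that the specific choice of which $f_j$ ($j\in\{1,2,3,4\}$) is used to reconstruct the missing $p_{ijk}$ in each of $\widetilde{\mathcal C},\widehat{\mathcal C},\widecheck{\mathcal C},\widebar{\mathcal C}$ is itself transported correctly by $\pi_{(1234)}$ — i.e.\ that $\pi_{(1234)}$ maps $f_1\mapsto f_2$ (up to sign/multiples, as in the computations already carried out in the proof of Lemma \ref{lm:u2}), etc. — so that the reconstructed point is genuinely the matched point and not merely a point satisfying the ideal $I$. This is a finite verification using the explicit permutation action on \eqref{eq:f321}–\eqref{eq:f11}, and since $\pi_{(1234)}$ preserves $I$ (it is a symmetry of the variety $\mathcal A$, being induced by a relabelling of monodromy matrices, which is exactly the kind of symmetry discussed for the group $G$), the reconstruction is consistent. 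I would present this verification compactly by noting that $\pi_{(1234)}$ permutes $\{f_1,f_2,f_3,f_4\}$ cyclically and fixes $f_5$ up to the ideal, which closes the argument.
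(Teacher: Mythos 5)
Your overall strategy is the paper's: read $\pi_{(1 2 3 4)}$ as the relabelling $(M_1,M_2,M_3,M_4)\mapsto(M_4,M_1,M_2,M_3)$ of the monodromy matrices, observe that it permutes the four PVI projections among themselves up to a relabelling of each triple, and use that finiteness of the $P_3$-orbit (and the Picard/Kitaev type) is invariant under such relabellings. The problem is that the one step that actually constitutes the lemma --- determining which set goes to which --- is exactly the step you leave unfinished. Your triple-by-triple computation derails (the line ``$\{M_1,M_3,M_4\}\mapsto\{M_4,M_1,M_2\}$ wait'' is a slip: with $M_1'=M_4$, $M_2'=M_1$, $M_3'=M_2$, $M_4'=M_3$ the slots $(1,3,4)$ of $p'$ carry $(M_4,M_2,M_3)$), and the ``missing index'' shortcut you substitute for it is internally inconsistent with the conclusion you assert: by your own rule ($\widetilde q$ misses index $4$, $\pi_{(1 2 3 4)}$ sends $4\mapsto 1$, and the set missing index $1$ is $\widehat{\mathcal C}$) you would conclude $\pi_{(1 2 3 4)}(\widetilde{\mathcal{C}})=\widehat{\mathcal{C}}$, whereas you then assert the cycle $(\widetilde{\mathcal{C}}\,\widecheck{\mathcal{C}}\,\widebar{\mathcal{C}}\,\widehat{\mathcal{C}})$ of the statement. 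A proof has to resolve this tension, not leave both readings on the page.

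What the paper does, and what your proposal still needs, is the explicit matrix-level bookkeeping together with a precise membership criterion. For $p\in\widetilde{\mathcal{C}}$ with $m=(M_1,\dots,M_4)$ one has $m'=(M_4,M_1,M_2,M_3)$, and for the three projections of $p'$ other than $\widecheck{q}'$ one computes $\widehat{n}'=(M_1,M_2,M_3)=\widetilde{n}$, $\widebar{n}'=(M_4,M_2,M_3)$ a cyclic relabelling of $\widehat{n}$, and $\widetilde{n}'=(M_4,M_1,M_2)$ a cyclic relabelling of $\widecheck{n}$; since cyclic relabelling of a triple preserves $q_\infty$ (cyclicity of the trace) and the finite-orbit property, these are again admissible points of $\widehat{\mathcal{M}}_{PVI}$, which is the criterion the paper invokes to place $p'$ in $\widecheck{\mathcal{C}}$, and a converse argument gives the reverse inclusion. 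Beware that under the stricter reading of the sets (the three projections used for the matching must lie in the input lists), the same computation shows that the input-type projections of $p'$ are $\widetilde{q}',\widecheck{q}',\widebar{q}'$, i.e.\ the raw bookkeeping attaches $p'$ to the set missing $\widehat{q}$; so when you finish the verification you must state exactly which criterion defines $\widetilde{\mathcal{C}},\widehat{\mathcal{C}},\widecheck{\mathcal{C}},\widebar{\mathcal{C}}$ and reconcile it with the direction of the cycle in \eqref{eq:afro4}. For the way the lemma is used later (taking the union of $\pi_{(1 2 3 4)}^i$ of one set, $i=0,\dots,3$) only the fact that the four sets are cyclically permuted matters, but a proof of the statement as written cannot leave the direction ambiguous. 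The peripheral claims in your sketch (invariance of $p_\infty$, compatibility with the braid action, the relations $f_1,\dots,f_5$ being permuted within the ideal) are fine but do not substitute for this verification.
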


\proof We only prove the first of \eqref{eq:afro4}, the other relations can be proved in a similar way. Thanks to Theorem \ref{lm:coord}, a point $p\in\tilde{\mathcal C}$ parameterizes a quadruple $m$ of monodromy matrices $m:=(M_1,M_2,M_3,M_4)$ up to global diagonal conjugation. Analogously, the three projections $\widehat{q},\widecheck{q},\widebar{q}\in\widehat{\mathcal{M}}_{PVI}$ parameterize three triples of monodromy matrices $\widehat{n},\widecheck{n},\widebar{n}\in\widehat{\mathcal{M}}_{PVI}$, such that, up to global diagonal conjugation:  
\begin{align*}
&\widehat{N}_{1} = M_2,~\widehat{N}_{2} = M_3,~\widehat{N}_{3} = M_4,~\widehat{N}_{\infty} = (M_4 M_3 M_2)^{-1},\\
&\widebar{N}_{1} = M_1,~\widebar{N}_{2} = M_3,~\widebar{N}_{3} = M_4,~\widebar{N}_{\infty} = (M_4 M_3 M_1)^{-1},\\
&\widecheck{N}_{1} = M_1,~\widecheck{N}_{2} = M_2,~\widecheck{N}_{3} = M_4,~\widecheck{N}_{\infty} = (M_4 M_2 M_1)^{-1}.
\end{align*}
Now take the point $p' = \pi_{(1 2 3 4)}(p)$, this parameterizes the triple $m' = \pi_{(1 2 3 4)}(m)$ up to global diagonal conjugation.  Consider now the three projections $\widehat{q}',\widetilde{q}',\widebar{q}'\in\widehat{\mathcal{M}}_{PVI}$ of $p'$. They parameterize three triples of monodromy matrices $\widehat{n}',\widetilde{n}',\widebar{n}'\in\widehat{\mathcal{M}}_{PVI}$, such that, up to global diagonal conjugation:  
\begin{eqnarray}
&&\widehat{N}_{1}' = M_2'=M_1,~\widehat{N}_{2}' = M_3'=M_2,~\widehat{N}_{3}' = M_4'=M_3,\nonumber\\
&&\qquad\qquad\qquad\widehat{N}_{\infty}' = (M_4' M_3' M_2')^{-1}=(M_3 M_2 M_1)^{-1},\nonumber\\\
&&\widebar{N}_{1}' = M_1'=M_4,~\widebar{N}_{2}' = M_3'=M_2,~\widebar{N}_{3}' = M_4'=M_3,\nonumber\\
&&\qquad\qquad\qquad\widebar{N}_{\infty}' = (M_4' M_3' M_1')^{-1}= (M_3 M_2 M_4)^{-1},\nonumber\\\
&&\widetilde{N}_{1}' = M_1'=M_4,~\widetilde{N}_{2}' = M_2'=M_1,~\widetilde{N}_{3}' = M_3'=M_2,\nonumber\\
&&\qquad\qquad\qquad\widetilde{N}_{\infty}' = (M_3' M_2' M_1')^{-1}=(M_2 M_1 M_4)^{-1}.\nonumber\
\end{eqnarray} 
These relations show that
$$
\widehat{n}' = \widetilde{n},\quad \widebar{n}'=\pi_{(1 2 3 )}\widehat{n}, \quad \widetilde{n}'=\pi_{(1 2 3 )}\widecheck{n},
$$
where
\begin{equation*}
\pi_{(1 2 3 )}(q)= (q_3,q_1,q_2,q_\infty,q_{32},q_{21},q_{31}),\\
\end{equation*} 
Now since  $ \widetilde{n}, \pi_{(1 2 3 )}\widehat{n}, \pi_{(1 2 3 )}\widecheck{n}\in \widehat{\mathcal{M}}_{PVI}$, this shows that $p'\in \widecheck{\mathcal{C}}$.
Viceversa, we can prove in a similar way that given $p'\in \widecheck{\mathcal{C}}$, then $p=\pi_{(1 2 3 4)}^{-1} p'\in \tilde{\mathcal C}$. This concludes the proof.
\endproof

We are now ready to describe how to implement the matching algorithmically.

\subsection{Matching with the PVI 45 exceptional algebraic solutions}\label{sec:3lt}

In this Section we give an algorithm that produces the \textit{finite} set $\mathcal{C}_{\text{E}_{45}\times\text{E}_{45}\times\text{E}_{45}}$ of all \textit{candidate} points $p$ such that three over four projections $\widehat{q},\widecheck{q},\widebar{q},\widetilde{q}$, defined in \eqref{projq}, are in the set $\text{E}_{45}$.\\ 
 \begin{al}\label{al:match3lt}\quad
\begin{enumerate}
\item Consider $(\widehat{q},\widecheck{q},\widebar{q}) \in \text{E}_{45}\times \text{E}_{45} \times \text{E}_{45}$.
\item Check if $\widehat{q},\widecheck{q},\widebar{q}$ satisfy relations given by the columns of Table \ref{tb:matching}, then go to the next Step, otherwise go to Step 1.
\item Determine the two roots $p^{(i)}_{321}$, for $i=1,2$, using equation \eqref{eq:f321}.
\item[] For each $i=1,2$:
\item Calculate the values of $p_\infty^{(i)}$ using equation \eqref{eq:f1}.
\item Use Table \ref{tb:matching}  to determine all the other components of $p^{(i)}$.
\item If $p^{(i)}$ satisfies equations \eqref{eq:f6}-\eqref{eq:f11} then go to the next Step, otherwise go to  Step 1.
\item Save $p^{(i)}$ in the set  $\widetilde{\mathcal{C}}_{\text{E}_{45}\times\text{E}_{45}\times\text{E}_{45}}$, and go to Step 1.
\end{enumerate}
\end{al}

Since $\text{E}_{45}$ is a finite set, this algorithm terminates and produces a finite set $\widetilde{\mathcal{C}}_{\text{E}_{45}\times\text{E}_{45}\times\text{E}_{45}}$. Finally the big set $\mathcal{C}_{\text{E}_{45}\times\text{E}_{45}\times\text{E}_{45}}$ can be generated by Lemma \ref{lm:pperms} as follows:
$$
\mathcal{C}_{\text{E}_{45}\times\text{E}_{45}\times\text{E}_{45}} = \widetilde{\mathcal{C}}_{\text{E}_{45}\times\text{E}_{45}\times\text{E}_{45}} \bigcup_{i=1}^{3} 
					\pi_{(1 2 3 4)}^{i}(\widetilde{\mathcal{C}}_{\text{E}_{45}\times\text{E}_{45}\times\text{E}_{45}}).
$$

The Algorithm \ref{al:match3lt} together with the action of the permutations producing the set $\mathcal{C}_{\text{E}_{45}\times\text{E}_{45}\times\text{E}_{45}}$ can be found \cite{ouralgorithms}. This set contains all \textit{candidate} points $p\in \mathcal A$ such that three projections \eqref{projq} are in the set 
$\text{E}_{45}$ and consists of $3,355,200$ points.

\subsection{Matching with Okamoto-type solutions}

The set $\text{O}_{}$  of all finite orbits corresponding to algebraic solutions of Okamoto type for the PVI equation is an infinite set, therefore to construct candidate points with projections in this set is not a straightforward adaptation of Algorithm \ref{al:match3lt}. 

\begin{definition}\label{df:notrelevant}
A point $p$ is called \textit{not relevant} if the associated monodromy group is reducible or there exists an index $i=1,\dots,4,\infty$ such that $M_i = \pm \mathbb{I}$. A point $p$ is called  \textit{relevant} otherwise.
\end{definition}

In this sub-section we are going to prove a few lemmata that show that in order for $p$ to be relevant, the number  of projections corresponding to solutions of Okamoto type  is limited. We will then characterise these projections and formulate algorithms that exploit these characterisations to classify candidate points with projections of Okamoto type.

\begin{proposition}\label{thm:ok001}
If a point $p\in\mathcal A$ is such that any three of its four projections $\widehat{q},\widecheck{q},\widebar{q},\widetilde{q}$ are in the set $\text{O}_{}$  of all finite orbits corresponding to algebraic solutions of Okamoto type then the point $p$ is \textit{not relevant}.
\end{proposition}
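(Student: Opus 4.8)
## Proof proposal

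The plan is to exploit the explicit parametrizations of Okamoto (Riccati) solutions of PVI together with the matching procedure of Section \ref{se:mp}. Recall that Okamoto-type finite orbits of $P_3$ are fixed points of the $B_3$-action; in co-adjoint coordinates on $\widehat{\mathcal M}_{PVI}$ these are precisely the points $q$ for which the corresponding triple $(N_1,N_2,N_3)$ is \emph{reducible} (the monodromy group has a common eigenvector), equivalently for which the PVI parameters $\theta_1,\theta_2,\theta_3,\theta_\infty$ satisfy one of the resonance/Riccati linear relations $\pm\theta_1\pm\theta_2\pm\theta_3\pm\theta_\infty\in 2\mathbb Z$. So the first step is to record this characterization: $\widetilde q\in\text{O}$ forces $(N_1,N_2,N_3)=(M_1,M_2,M_3)$ to be reducible, i.e. $M_1,M_2,M_3$ share a common eigenvector (up to global conjugation we may take them all upper- or all lower-triangular); and similarly for $\widehat q,\widecheck q,\widebar q$ with the triples $(M_2,M_3,M_4)$, $(M_1,M_2,M_4)$, $(M_1,M_3,M_4)$ respectively.

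The main step is then a combinatorial/linear-algebra argument. Suppose three of the four projections are in $\text{O}$; by the $\pi_{(1234)}$-symmetry of Lemma \ref{lm:pperms} we may assume these are $\widetilde q,\widehat q,\widecheck q$, i.e. the triples $(M_1,M_2,M_3)$, $(M_2,M_3,M_4)$, $(M_1,M_2,M_4)$ are each reducible. First I would argue that $M_2$ itself must be $\pm\mathbb I$ or the full group $\langle M_1,\dots,M_4\rangle$ is reducible, which in either case makes $p$ \emph{not relevant} in the sense of Definition \ref{df:notrelevant}. The key sub-claim is: if $M_2\neq\pm\mathbb I$, its eigenvectors are well-defined; reducibility of $(M_1,M_2,M_3)$ gives a common eigenvector $v$ of $M_1,M_2,M_3$, which is then necessarily one of the two eigenlines of $M_2$. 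Likewise reducibility of $(M_2,M_3,M_4)$ gives a common eigenvector $w$ of $M_2,M_3,M_4$, again an eigenline of $M_2$; and $(M_1,M_2,M_4)$ gives a common eigenvector $u$ of $M_1,M_2,M_4$, an eigenline of $M_2$. Since $M_2$ has only two eigenlines, by pigeonhole at least two of $v,w,u$ coincide — say $v=w$ — and then $M_1,M_2,M_3,M_4$ all fix that line, contradicting irreducibility of the monodromy group (which is part of the definition of $\mathcal U$). The remaining case is that $M_2$ is diagonalizable but $v,w,u$ hit both eigenlines without two coinciding — impossible for three lines among two — unless some of the triples are reducible because a matrix is scalar; one then traces through which $M_i=\pm\mathbb I$, again landing in \emph{not relevant}. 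The degenerate case $M_2=\pm\mathbb I$ is immediately \emph{not relevant}.

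I would organize the write-up as: (1) state and justify the reducibility characterization of Okamoto-type $q$ (citing \cite{LT}, \cite{Ok2} for Riccati solutions $=$ reducible monodromy, resonant parameters); (2) translate "three projections in $\text{O}$" into "three of the triples $(M_i,M_j,M_k)$ reducible"; (3) reduce by Lemma \ref{lm:pperms} to the case of triples $(M_1,M_2,M_3),(M_2,M_3,M_4),(M_1,M_2,M_4)$, all containing the common index $2$; (4) run the two-eigenlines pigeonhole argument on $M_2$ to conclude either $\langle M_1,\dots,M_4\rangle$ reducible or some $M_i=\pm\mathbb I$, hence $p$ not relevant. The main obstacle I anticipate is handling the bookkeeping of the degenerate sub-cases cleanly — in particular ruling out the configuration where one of the three triples is reducible "for free" because one of its three matrices happens to be $\pm\mathbb I$ (so no genuine common eigenvector constraint is imposed), and making sure that in every such branch one still exhibits an $M_i=\pm\mathbb I$ or a global common eigenvector. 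A secondary subtlety is that some Okamoto orbits could be Riccati solutions with a common eigenvector \emph{flag} rather than a single shared eigenvector for a non-diagonalizable matrix; but since $\dim V=2$, "reducible" always produces at least one genuine common eigenline, which is all the pigeonhole argument needs.
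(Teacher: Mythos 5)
There is a genuine gap, and it sits in your very first step: the characterization of Okamoto-type points as ``precisely the $q$ whose triple $(N_1,N_2,N_3)$ is reducible'' is incomplete. The set $\text{O}$ of fixed points of the braid action also contains the points of $\text{O}_{\text{ID}}$ (Definition \ref{df:id}), i.e.\ triples in which one of the matrices \emph{including the product} $N_3N_2N_1$ equals $\pm\mathbb{I}$, while $\langle N_1,N_2,N_3\rangle$ may perfectly well be irreducible; this is exactly the case analysed in Lemma \ref{lm:ok006}, where $N_3N_2N_1=\pm\mathbb{I}$ produces the trace conditions $q_{21}=\pm q_3$, $q_{31}=\pm q_2$, $q_{32}=\pm q_1$, $q_\infty=\pm2$. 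For such a projection there is no common eigenvector, so the only input your eigenline count on $M_2$ uses is unavailable. Concretely, if $M_3M_2M_1=\epsilon\,\mathbb{I}$, $M_4M_3M_2=\epsilon'\,\mathbb{I}$, $M_4M_2M_1=\epsilon''\,\mathbb{I}$ with $\epsilon,\epsilon',\epsilon''=\pm1$ and no $M_i=\pm\mathbb{I}$, then all three projections lie in $\text{O}$, each triple is generically irreducible, and your pigeonhole argument concludes nothing. The paper disposes of this case by a different, group-theoretic computation: combining the three relations gives $M_4=\epsilon\epsilon' M_1$, $M_3=\epsilon'\epsilon'' M_1$, $M_2=\epsilon\epsilon'\epsilon'' M_1^{-2}$, so the whole monodromy group is generated by $M_1$ alone, hence reducible, hence $p$ is not relevant. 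Your remark about ``degenerate sub-cases'' only contemplates a triple that is reducible because a single matrix is $\pm\mathbb{I}$ (which is indeed harmless); it does not cover the $N_3N_2N_1=\pm\mathbb{I}$ phenomenon at all.

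The same omission affects the mixed case: when one of the three projections is of $\text{O}_{\text{ID}}$ type (product $=\pm\mathbb{I}$) and another is reducible, one needs the argument of Lemma \ref{lm:ok004}, where the relation $M_iM_jM_k=\pm\mathbb{I}$ is used to transport the common eigenvector of the reducible triple to the remaining matrix, again forcing global reducibility. So the correct structure of the proof is the dichotomy $\text{O}=\text{O}_{\text{ID}}\cup\text{O}_{\text{RED}}$ followed by three cases (all $\text{O}_{\text{ID}}$, all $\text{O}_{\text{RED}}$, mixed). Your treatment of the all-reducible case --- pigeonhole on the two eigenlines of the shared matrix $M_2$, after reducing via $\pi_{(1\,2\,3\,4)}$ to the triples containing the index $2$ --- is essentially the paper's case (ii) and is fine; to complete the proof you must add the correct characterization of $\text{O}$ and the two missing cases above.
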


Consequently, all points $p$ satisfying hypotheses of Proposition \ref{thm:ok001} will be irrelevant to our classification (and then excluded from it). 

Before proving this result, we will need the following two definitions:

\begin{definition}\label{df:id}
The set  $\text{O}_{\text{ID}}$ is the set of all the $q\in \text{O}_{}$ such that the associated triple of monodromy matrices $n \in \widehat{\mathcal{M}}_{PVI}$ admits one matrix equals to $\pm \mathbb{I}$.
\end{definition}

\begin{definition}\label{df:rid}
The set $\text{O}_{\text{RED}}$ is the set of all the $q\in \text{O}_{}$ such that if we consider the associated triple of monodromy matrices $n \in \widehat{\mathcal{M}}_{PVI}$ then the monodromy group $\left\langle N_1,N_2,N_3 \right\rangle$ is \textit{reducible}.
\end{definition}

{\it Proof of Proposition \ref{thm:ok001}:}
In order to prove the statement, we distinguish three cases:

\noindent (i) Assume $p$  has three projections in $\text{O}_{\text{ID}}$. It is enough to consider $m\in\hhmg2$ and the following three projections: 
\begin{equation}\label{eq-proj-prf}
\widetilde{n}=(M_1,M_2,M_3),\qquad  \widehat{n}=(M_2,M_3,M_4), \qquad
\widecheck{n}=(M_1,M_2,M_4),
\end{equation}
because all other cases differ from this case only by a permutation of the matrices $M_i$, see Lemma \ref{lm:pperms}. If any of $M_i = \pm \mathbb{I}$, then we conclude. If not, we are left with the following case:
$$
\widetilde{N}_{\infty} = M_3 M_2 M_1= \widetilde{\epsilon}\mathbb{I},\quad 
\widehat{N}_{\infty} = M_4 M_3 M_2= \widehat{\epsilon}\mathbb{I},\quad
\widecheck{N}_{\infty} = M_4 M_2 M_1= \widecheck{\epsilon}\mathbb{I},
$$
where $\widetilde\epsilon,\widehat\epsilon,\widecheck\epsilon=\pm1$. Combining these relations we obtain: 
$$
 M_4 = \widetilde{\epsilon}\widehat{\epsilon} M_1, \quad M_3 = \widetilde{\epsilon}\widecheck{\epsilon} M_4,\, \hbox{and therefore }\,
M_3 = \widehat{\epsilon}\widecheck{\epsilon} M_1, \quad  M_2 = \widetilde{\epsilon}\widehat{\epsilon} \widecheck{\epsilon}M_1^{-2},
$$ 
so that finally $m=(M_1,\widetilde{\epsilon}\widehat{\epsilon} \widecheck{\epsilon} M_1^{-2},\widehat{\epsilon}\widecheck{\epsilon} M_1,\widetilde{\epsilon}\widehat{\epsilon} M_1)$ which is reducible. Therefore $p$ is not relevant.

\noindent  (ii) Suppose $p$ is such that three projections over four are in the set $\text{O}_{\text{RED}}$. Again it is enough to consider the three projections \eqref{eq-proj-prf}.
Since the three monodromy groups defined by the triples $\widetilde{n}, \widehat{n}, \widecheck{n}$ are reducible, these triples have each a common eigenvector, let us denote them $\widetilde{v}$,  $\widehat{v}$ and $\widecheck{v}$ respectively. Now the matrix  $M_2$ that appears in all the three projections, has three eigenvectors $\widetilde{v},\widehat{v}$ and $\widecheck{v}$, which implies that one of the following identities must hold:
$\widetilde{v} = \widehat{v}\quad  \text{ or } \quad \widetilde{v} = \widecheck{v} \text{ or }\quad  \widehat{v} = \widecheck{v}.$
Therefore the  monodromy group is reducible and the point $p$ is not relevant.

\noindent  (iii) When there are three projections in $\text{O}$, not all of the same type, we apply Lemma \ref{lm:ok004}. This concludes the proof.\endproof

\begin{lemma}\label{lm:ok004}
If a point $p\in \mathcal A$ is such that one of its four projections $\widehat{q},\widecheck{q},\widebar{q},\widetilde{q}$ is in the set $\text{O}_{\text{ID}}$ and another one projection is in the set 
$\text{O}_{\text{RED}}$, then such point $p$ is \textit{not relevant}.
\end{lemma}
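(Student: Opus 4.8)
The plan is to reduce the statement to a configuration of four matrices in which the constraints imposed by one reducible projection and one projection equal to a scalar multiple of the identity are simultaneously satisfied, and then to show that such a configuration forces the whole monodromy group to be reducible (or forces one of the matrices to be $\pm\mathbb I$). By Lemma \ref{lm:pperms} — exactly as in the proof of Proposition \ref{thm:ok001} — it suffices to treat a single representative arrangement of the two relevant projections; every other case differs by the permutation $\pi_{(1 2 3 4)}$, which maps the four sets $\widetilde{\mathcal C},\widehat{\mathcal C},\widecheck{\mathcal C},\widebar{\mathcal C}$ into one another and correspondingly permutes the four triples. So I would fix $m=(M_1,M_2,M_3,M_4)\in\mathcal U$ and consider two of the projections in \eqref{eq-proj-prf}, say $\widetilde n=(M_1,M_2,M_3)\in \text{O}_{\text{ID}}$ and $\widehat n=(M_2,M_3,M_4)\in \text{O}_{\text{RED}}$, these two being the representative overlap (they share the pair $M_2,M_3$).

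Next I would unwind the two hypotheses. Being in $\text{O}_{\text{ID}}$ means one of the four matrices $M_1,M_2,M_3,\widetilde N_\infty=M_3M_2M_1$ is $\pm\mathbb I$; if any of $M_1,M_2,M_3$ is $\pm\mathbb I$ we are done immediately (the point is not relevant by Definition \ref{df:notrelevant}), so the only surviving possibility is $M_3M_2M_1=\widetilde\epsilon\,\mathbb I$ with $\widetilde\epsilon=\pm1$. Being in $\text{O}_{\text{RED}}$ means the triple $\langle M_2,M_3,M_4\rangle$ is reducible, i.e. $M_2,M_3,M_4$ have a common eigenvector $v$. From $M_3M_2M_1=\widetilde\epsilon\,\mathbb I$ I would solve $M_3=\widetilde\epsilon\,M_1^{-1}M_2^{-1}$, so $M_3$ is determined by $M_1,M_2$; but $M_2$ and $M_3$ already share the eigenvector $v$, hence $M_1^{-1}M_2^{-1}$ fixes the line $\langle v\rangle$, and since $M_2$ fixes it too, $M_1$ fixes $\langle v\rangle$ as well. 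Therefore all of $M_1,M_2,M_3,M_4$ fix the common line $\langle v\rangle$, the monodromy group $\langle M_1,\dots,M_4\rangle$ is reducible, and $p$ is not relevant.

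I would then observe that the same argument covers every other pairing of an $\text{O}_{\text{ID}}$-projection with an $\text{O}_{\text{RED}}$-projection: either the two projections overlap in two matrices (the case just treated, up to the permutation symmetry of Lemma \ref{lm:pperms}), or they overlap in exactly one matrix (e.g. $\widetilde n=(M_1,M_2,M_3)$ and $\widecheck n=(M_1,M_2,M_4)$ overlap in $M_1,M_2$; the genuinely ``one matrix'' case such as $\widetilde n$ with $\widehat\cdot$ only ever arises after relabelling and again shares a pair). In each instance one uses the scalar relation $M_aM_bM_c=\pm\mathbb I$ coming from the $\text{O}_{\text{ID}}$-projection to express the third matrix of that triple in terms of the other two, and then the common eigenvector of the $\text{O}_{\text{RED}}$-triple propagates — through the shared matrices and through this scalar relation — to a common eigenvector of all four $M_i$. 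The main obstacle is purely bookkeeping: making sure that for \emph{every} admissible pair of projections there is at least one matrix shared between the $\text{O}_{\text{RED}}$-triple and the $\text{O}_{\text{ID}}$-scalar relation, so that the eigenline can actually be transported; this is where invoking Lemma \ref{lm:pperms} to cut down to one representative case, just as in the proof of Proposition \ref{thm:ok001}, does the real work.
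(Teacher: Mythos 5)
Your argument is correct and is essentially the paper's own proof: once no matrix of the $\text{O}_{\text{ID}}$ triple is $\pm\mathbb I$, the scalar relation $M_aM_bM_c=\pm\mathbb I$ expresses the matrix not shared with the $\text{O}_{\text{RED}}$ triple as $\pm$ the inverse of a product of two matrices preserving the common eigenline $\langle v\rangle$, so all four monodromy matrices fix $\langle v\rangle$ and $p$ is not relevant. Your closing worry about a ``one-matrix overlap'' case is vacuous, since any two distinct triples drawn from $\{M_1,\dots,M_4\}$ share exactly two matrices, so the configuration you treat is, up to relabelling, the general one --- exactly as in the paper's proof.
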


\proof Consider $m\in\hhmg2$ and the following two distinct \textit{generic} projections:  
\begin{align}
&(M_i,M_j,M_k)\in \text{O}_{\text{ID}},~i>j>k,~i,j,k=1,\dots,4,\label{eq:1stproj}\\
&(M_{i'},M_{j'},M_{k'})\in\text{O}_{\text{RED}},~i'>j'>k',~i',j',k'=1,\dots,4.\label{eq:2ndproj}
\end{align}
If either $M_i$, $M_j$, $M_k$ is equal to $\pm\mathbb{I}$, then we conclude. Otherwise suppose:
\begin{equation}
M_i M_j M_k = \pm \mathbb{I}. 
\label{ee1}\end{equation}
Moreover, suppose the monodromy group associated to the triple $(M_{i'},M_{j'},$ $M_{k'})$ is reducible, then the matrices $M_{i'},M_{j'},M_{k'}$ have a common eigenvector $v$. In \eqref{eq:1stproj} and in \eqref{eq:2ndproj} at least two indices $i,j,k$ that are equal to two indices $i',j',k'$, without loss of generality, suppose $i\neq i'$, $j=j'$ and $k=k'$, then equation \eqref{ee1}  implies $M_i = \pm (M_{j'} M_{k'})^{-1}$, which shows that $v$ is also an eigenvector for $M_j$ and therefore the monodromy group $<M_i,M_{i'},M_{j},M_{k}>$ is reducible as we wanted to prove.\endproof

\begin{lemma}\label{lm:ok006} Let $p$ be a relevant point such that one of its projections $q$ is in the set $\text{O}_{\text{ID}}$, then $q$ satisfies: 
\begin{equation}
q_{2 1} =  \pm q_3,\quad
q_{3 1} =  \pm q_2,\quad
q_{3 2} =  \pm q_1,\quad  q_{\infty}  = \pm 2.
\label{eq:okrecond5}\end{equation}
\end{lemma}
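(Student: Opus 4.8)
\emph{Sketch of the intended argument.} The strategy is to reduce to one distinguished projection, to pin down which of its four associated matrices $N_1,N_2,N_3,N_\infty$ equals $\pm\mathbb{I}$, and then to read off \eqref{eq:okrecond5} by elementary trace identities in $\mathrm{SL}_2(\mathbb{C})$.

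First I would invoke Lemma \ref{lm:pperms}: since the cyclic permutation $\pi_{(1234)}$ merely relabels the matrices $M_1,\dots,M_4$ (hence preserves relevance in the sense of Definition \ref{df:notrelevant}) and cyclically intertwines the four projection maps $\widetilde\pi,\widehat\pi,\widecheck\pi,\widebar\pi$ of \eqref{projq}, there is no loss of generality in assuming that the projection $q$ in the statement is $q=\widetilde\pi(p)=(p_1,p_2,p_3,p_{321},p_{21},p_{31},p_{32})$. By Theorem \ref{thm:match} the triple of monodromy matrices associated with $q$ is then $(N_1,N_2,N_3)=(M_1,M_2,M_3)$, with $N_\infty=(M_3M_2M_1)^{-1}$; correspondingly $q_i=\Tr M_i$ for $i=1,2,3$, $q_\infty=\Tr(M_3M_2M_1)$, and $q_{jk}=\Tr(M_jM_k)$ for $j>k$.

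The key step is to locate the identity matrix. Because $p$ is relevant, none of $M_1,M_2,M_3$ is $\pm\mathbb{I}$; because $q\in\text{O}_{\text{ID}}$, Definition \ref{df:id} forces one of $N_1=M_1$, $N_2=M_2$, $N_3=M_3$, $N_\infty$ to equal $\pm\mathbb{I}$, and hence it must be $N_\infty$, say $N_\infty=\epsilon\mathbb{I}$ with $\epsilon\in\{\pm1\}$, i.e.\ $M_3M_2M_1=\epsilon\mathbb{I}$. This is where the whole weight of the hypothesis lies, and it is the only delicate point: one must apply the definition of $\text{O}_{\text{ID}}$ to the explicit triple supplied by Theorem \ref{thm:match} (rather than to some other representative of the same point of the character variety), so that relevance can be used verbatim to eliminate the three cases $N_1,N_2,N_3=\pm\mathbb{I}$.

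The remainder is a routine computation using $\Tr A^{-1}=\Tr A$ and conjugation invariance of the trace. From $M_3M_2M_1=\epsilon\mathbb{I}$ one obtains immediately $q_\infty=\Tr(\epsilon\mathbb{I})=2\epsilon=\pm2$; from $M_3M_2=\epsilon M_1^{-1}$ that $q_{32}=\epsilon\,\Tr M_1^{-1}=\epsilon q_1$; from $M_2M_1=\epsilon M_3^{-1}$ that $q_{21}=\epsilon q_3$; and from $M_3M_1=\epsilon\,M_3M_2^{-1}M_3^{-1}$ that $q_{31}=\epsilon\,\Tr M_2^{-1}=\epsilon q_2$. These are precisely the relations \eqref{eq:okrecond5}; in fact one gets the slightly stronger statement that all four signs coincide and equal $\epsilon$. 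The main obstacle is therefore conceptual rather than computational: it is exactly the step of using relevance to force the $\pm\mathbb{I}$ among $N_1,N_2,N_3,N_\infty$ to be $N_\infty$.
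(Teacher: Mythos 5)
Your proposal is correct and follows essentially the same route as the paper: relevance of $p$ rules out $N_1,N_2,N_3=\pm\mathbb{I}$, so the matrix forced to be $\pm\mathbb{I}$ by Definition \ref{df:id} must be $N_\infty$, and then the relations \eqref{eq:okrecond5} follow from the same trace identities ($\Tr A^{-1}=\Tr A$ and cyclic invariance) applied to $N_3N_2N_1=\pm\mathbb{I}$. The only differences are cosmetic: your preliminary reduction to $q=\widetilde\pi(p)$ via $\pi_{(1 2 3 4)}$ is a convenience the paper skips, and your observation that all four signs coincide is a harmless strengthening.
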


\proof Consider the triple of matrices $n = (N_1,N_2,N_3)$ determined by $q\in\text{O}_{\text{ID}}$. If any of the $N_i$ is equal to $\pm \mathbb{I}$, by the matching procedure, we end up with a point $p$ that is  not relevant, therefore, we avoid this case. Otherwise, assume $N_\infty = N_3 N_2 N_1 = \pm \mathbb{I}$, then:
\begin{align}
&N_1 = \pm(N_3 N_2)^{-1},~N_2 = \pm(N_1 N_3)^{-1},~N_3 = \pm(N_2 N_1)^{-1}.
\end{align}
By taking the traces we obtain \eqref{eq:okrecond5}.
This concludes the proof.
\endproof

\begin{lemma}\label{lm:ok005}
Let $q$ be the co-adjoint coordinates on $\widehat{\mathcal{M}}_{PVI}$. If $q$ is in the set $\text{O}_{\text{RED}}$, then $q$ satisfies: 
\begin{equation}
\begin{cases}
q_{i j} &=  {1 \over 2} ( q_i q_j - \epsilon_i \epsilon_j s_i s_j ),~i>j,~i,j=1,2,3,\\
q_{\infty}  &= {1\over 4} ( q_1 q_2 q_3 - \epsilon_1 \epsilon_2 s_1 s_2 q_3 - \epsilon_1 \epsilon_3 s_1 s_3 q_2 - \epsilon_2 \epsilon_3 s_2 s_3 q_1 )
\end{cases}
\label{eq:okrecond4}\end{equation}
where $s_k = \sqrt{4-q_k^2}$ for some choice of the signs $\epsilon_k = \pm 1$ for $k=1,2,3$.
\end{lemma}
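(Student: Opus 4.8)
The plan is to exploit the fact that a reducible triple $(N_1,N_2,N_3)$ in $\widehat{\mathcal{M}}_{PVI}$ has a common eigenvector, so that after a global conjugation all three matrices are simultaneously upper triangular. First I would write, in a suitable basis,
\[
N_k = \begin{pmatrix} \lambda_k & * \\ 0 & \lambda_k^{-1} \end{pmatrix}, \qquad k=1,2,3,
\]
where $\lambda_k + \lambda_k^{-1} = q_k$, and note that because $N_\infty N_3 N_2 N_1 = \mathbb{I}$ with $N_\infty = \exp(i\pi\theta_\infty\sigma_3)$ diagonal, the product of the diagonal parts must be trivial; this constrains the eigenvalue assignment but, crucially, the off-diagonal entries are not determined. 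The key observation is that the traces $q_{ij} = \Tr N_i N_j$ and $q_\infty$ depend only on the diagonal entries whenever the matrices are simultaneously triangular, \emph{provided} one is careful: in fact $\Tr N_i N_j = \lambda_i\lambda_j + \lambda_i^{-1}\lambda_j^{-1}$ for the upper-triangular form. Writing $\lambda_k = e^{i\pi\theta_k}$ and using $q_k = 2\cos\pi\theta_k$, $s_k = \sqrt{4-q_k^2} = 2|\sin\pi\theta_k|$, one gets $\lambda_i\lambda_j = \frac12(q_iq_j - \epsilon_i\epsilon_j s_i s_j) \pm \frac{i}{2}(\epsilon_i s_i q_j + \epsilon_j q_i s_j)$ for appropriate signs $\epsilon_k$, and the real part is precisely the claimed formula for $q_{ij}$; the imaginary parts must cancel in the sum $\lambda_i\lambda_j + \lambda_i^{-1}\lambda_j^{-1}$, leaving $q_{ij} = \frac12(q_iq_j - \epsilon_i\epsilon_j s_i s_j)$.

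Concretely, the steps I would carry out are: (1) invoke reducibility of $\langle N_1,N_2,N_3\rangle$ to find a global conjugation making all $N_k$ upper triangular, with diagonal $(\lambda_k,\lambda_k^{-1})$; (2) parametrize $\lambda_k = e^{i\pi\theta_k}$ so that $q_k = \lambda_k + \lambda_k^{-1}$ and $s_k^2 = 4 - q_k^2 = -(\lambda_k - \lambda_k^{-1})^2$, hence $\lambda_k - \lambda_k^{-1} = \pm i s_k =: i\epsilon_k s_k$ with $\epsilon_k = \pm1$; (3) compute directly $\Tr N_i N_j = \lambda_i\lambda_j + (\lambda_i\lambda_j)^{-1}$ and expand $\lambda_i\lambda_j = \frac12[(\lambda_i + \lambda_i^{-1})(\lambda_j + \lambda_j^{-1}) + (\lambda_i - \lambda_i^{-1})(\lambda_j - \lambda_j^{-1})] = \frac12(q_iq_j - \epsilon_i\epsilon_j s_i s_j) + (\text{imaginary})$, and symmetrically $(\lambda_i\lambda_j)^{-1}$ has the conjugate value, so the imaginary parts cancel and $q_{ij} = \frac12(q_iq_j - \epsilon_i\epsilon_j s_i s_j)$; (4) do the same for $q_\infty = \Tr N_3 N_2 N_1 = \lambda_1\lambda_2\lambda_3 + (\lambda_1\lambda_2\lambda_3)^{-1}$, expanding the triple product in terms of the symmetric and antisymmetric combinations, which yields $q_\infty = \frac14(q_1q_2q_3 - \epsilon_1\epsilon_2 s_1 s_2 q_3 - \epsilon_1\epsilon_3 s_1 s_3 q_2 - \epsilon_2\epsilon_3 s_2 s_3 q_1)$ after the imaginary parts again cancel.

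The one genuine subtlety — and the step I expect to need the most care — is the bookkeeping of the branch of the square root and the signs $\epsilon_k$: the quantity $s_k$ is only defined up to sign, and the statement asserts existence of \emph{some} consistent choice of signs, so I must be careful that the same $\epsilon_k$ works simultaneously in all four formulas. This is automatic once one fixes $\lambda_k - \lambda_k^{-1} = i\epsilon_k s_k$ as the \emph{definition} of $\epsilon_k$ for each $k$ (so the $\epsilon_k$ are not free but pinned down by the chosen eigenvalue $\lambda_k$ rather than just by $q_k$), and then every expression above is a polynomial identity in the $\lambda_k$. The reduction to the upper-triangular case is harmless because $q_{ij}$ and $q_\infty$ are conjugation invariants, and the degenerate sub-cases (some $\lambda_k = \pm 1$, i.e.\ $s_k = 0$, or some $N_k$ non-diagonalizable) are covered by the same formulas since then $\epsilon_k s_k = 0$ and one only needs $N_k$ triangular, which reducibility still provides. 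So beyond the sign conventions there is no real obstacle; the proof is essentially a direct computation in $2\times 2$ upper triangular matrices.
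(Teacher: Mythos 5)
Your proposal is correct and follows essentially the same route as the paper: reducibility gives a simultaneous upper-triangular form, the traces $q_{ij}$ and $q_\infty$ are then read off from the diagonal eigenvalue data, and the identities \eqref{eq:okrecond4} follow from the addition formulas (the paper phrases this with $q_k=2\cos\pi\theta_k$ and trigonometric identities, which is just the multiplicative computation you do with $\lambda_k^{\pm1}$ and $\lambda_k-\lambda_k^{-1}=i\epsilon_k s_k$ written in trigonometric notation). Your remark that everything is a polynomial identity in the $\lambda_k$, with $\epsilon_k$ pinned down by the choice of eigenvalue, also correctly handles the sign bookkeeping and the degenerate cases.
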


\proof  Consider the triple of matrices $n = (N_1,N_2,N_3)$ determined by $q\in\text{O}_{\text{RED}}$, they define a reducible monodromy group. Therefore we can choose a basis in which they are all upper triangular. Then  their diagonal elements are given by their eigenvalues  $eigenv(N_i)=\exp{(\epsilon_l\pi\theta_i)}$, where $\epsilon_l=\pm 1$, so that:
\begin{align}
&\Tr(N_i)=2 \cos\pi\theta_i,~ i=1,2,3,\infty,\\
&\Tr(N_i N_j) = 2 \cos( \pi (\epsilon_i\theta_i + \epsilon_j\theta_j) ),~i,j=1,2,3,~i>j,\\
&\Tr(N_3 N_2 N_1) = 2 \cos( \pi (\epsilon_1\theta_1 + \epsilon_2\theta_2 + \epsilon_3\theta_3 ) ).
\end{align}
Applying trigonometric identities we obtain relations \eqref{eq:okrecond4}. This concludes the proof.
\endproof

An obvious consequence of this result is:

\begin{lemma}\label{lm:ok003}
Suppose $p\in \mathcal A$ is a relevant point such that any two of its four projections $\widehat{q},\widecheck{q},\widebar{q},\widetilde{q}$, defined in \eqref{projq}, are in the set $\text{O}_{\text{RED}}$. Denote by  $q$  one of the remaining projections, then there exists a couple of indices $(i,j)$,$(i',j')$ with one index in $(i,j)$ equal to one index in $(i',j')$ such that: 
\begin{equation}
\begin{cases}
q_{ij}^2 + q_{i}^2 + q_{j}^2 - q_{ij} q_{i} q_{j} - 4 = 0, &i>j,~i,j=1,2,3,\\
q_{i'j'}^2 + q_{i'}^2 + q_{j'}^2 - q_{i'j'} q_{i'} q_{j'} - 4 = 0, &i'>j',~i',j'=1,2,3.
\end{cases}
\label{eq:okrecond1}\end{equation}
\end{lemma}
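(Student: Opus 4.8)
The plan is to combine Lemma \ref{lm:ok005}, which characterizes membership in $\text{O}_{\text{RED}}$ algebraically, with Lemma \ref{lm:ok004}, which forbids a relevant point from having a projection in $\text{O}_{\text{ID}}$ alongside one in $\text{O}_{\text{RED}}$. First I would recall that the two projections assumed to lie in $\text{O}_{\text{RED}}$ each come, by Lemma \ref{lm:ok005}, from a triple of monodromy matrices that can be simultaneously upper-triangularized; in particular, for such a triple, every pair $N_aN_b$ has trace satisfying the vanishing relation $q_{ab}^2+q_a^2+q_b^2-q_{ab}q_aq_b-4=0$ — this is exactly the statement that the Fricke polynomial $g(q_a,q_b,q_{ab})$ vanishes, equivalently that the pair $\langle N_a,N_b\rangle$ is itself reducible. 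So reducibility of a three-matrix tuple forces reducibility of every two-matrix sub-tuple, hence the vanishing of $g$ on the corresponding traces.

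Next I would exploit the combinatorial overlap structure from Table \ref{tb:matching}. Among the four projections $\widehat q,\widecheck q,\widebar q,\widetilde q$, any two are obtained from $m=(M_1,\dots,M_4)$ by deleting two distinct matrices; thus two projections always share exactly two of the matrices $M_1,\dots,M_4$, i.e. two of the indices $\{1,2,3,4\}$. Pick the two projections that lie in $\text{O}_{\text{RED}}$ and let $M_a,M_b$ be the two monodromy matrices common to both. By the previous paragraph applied to each of these two projections, the pair $\langle M_a,M_b\rangle$ is reducible in both, which is automatic, but more importantly a third matrix $M_c$ appears in (at least) one of the two $\text{O}_{\text{RED}}$ projections together with $M_a$ (or with $M_b$), forcing $g$ to vanish on the trace data of the pair $\langle M_a,M_c\rangle$ (resp. $\langle M_b,M_c\rangle$). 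Now let $q$ be one of the remaining two projections: it must contain at least one of $M_a,M_b$ — in fact, by a short case check on which two matrices were deleted, one sees that $q$ contains a pair $\{M_a,M_c\}$ (or $\{M_b,M_c\}$) that already occurs inside one of the $\text{O}_{\text{RED}}$ projections, and a second pair sharing one matrix with it. Translating the two vanishing $g$-relations into the co-adjoint coordinates of $q$ — and using the column identifications of Table \ref{tb:matching} to rename the traces $q_i,q_{ij}$ appropriately — yields precisely the two equations \eqref{eq:okrecond1} with one index of $(i,j)$ equal to one index of $(i',j')$.

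The step I expect to be the main obstacle is the bookkeeping in the overlap argument: verifying that, no matter which two of the four projections are the $\text{O}_{\text{RED}}$ ones and no matter which of the remaining two is chosen as $q$, the shared-matrix pattern always produces \emph{two} pairs inside $q$ (not just one) that are forced to be reducible, with the required common index. This is a finite check over the $\binom{4}{2}=6$ unordered pairs of projections and the $2$ choices of $q$, but one must be careful because the matrix $M_\infty$-type products hidden in each projection's $N_\infty$ could in principle interfere; however, since we only ever use reducibility of genuine pairs $\langle M_a,M_b\rangle$ (a property invariant under the simultaneous conjugation defining $\mathcal U$), and since relevance of $p$ rules out, via Lemma \ref{lm:ok004}, the degenerate mixed case, the check goes through. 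I would also remark at the end that the two equations in \eqref{eq:okrecond1} are nothing but $g(q_i,q_j,q_{ij})=0$ and $g(q_{i'},q_{j'},q_{i'j'})=0$ written out, so the lemma is an immediate corollary of Lemma \ref{lm:ok005} once the index-overlap combinatorics is settled.
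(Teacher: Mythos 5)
Your proposal is correct and is essentially the argument the paper intends: the paper states this lemma as an ``obvious consequence'' of Lemma \ref{lm:ok005}, and your route --- a reducible triple forces each of its pairs to be reducible, hence $g(q_a,q_b,q_{ab})=0$ for every pair, and the two $\text{O}_{\text{RED}}$ projections share two matrices so that any remaining projection $q$ inherits two such vanishing relations on pairs with a common index via the identifications of Table \ref{tb:matching} --- is exactly that fleshed out. The only cosmetic remark is that your appeal to Lemma \ref{lm:ok004} and to relevance of $p$ is not actually needed for the conclusion, and the finite overlap check you defer is the one-line computation with shared matrices $M_a,M_b$ that you describe.
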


Lemmata \ref{lm:ok006}, \ref{lm:ok005} and \ref{lm:ok003} lead to the development of additional matching algorithms in order to complete our classification for the cases when these points are included. Thanks to Lemma \ref{lm:ok004}, in order to complete our classification of candidate points, we need to construct only the following four sets: 
$\mathcal{C}_{\text{E}_{45}\times\text{O}_{\text{ID}}\times\text{O}_{\text{ID}}}$, the set of all candidate points with at least two projections in $\text{O}_{\text{ID}}$ and one in $\text{E}_{45}$, the set $\mathcal{C}_{\text{E}_{45}\times\text{O}_{\text{RED}}\times\text{O}_{\text{RED}}}$, the set of all candidate points with at least two projections in $\text{O}_{\text{RED}}$ and one in $\text{E}_{45}$, 
$\mathcal{C}_{\text{E}_{45}\times\text{E}_{45}\times\text{O}_{\text{ID}}}$, the set of all candidate points with at least two projections in $\text{E}_{45}$ and one in $\text{O}_{\text{ID}}$, and  $\mathcal{C}_{\text{E}_{45}\times\text{E}_{45}\times\text{O}_{\text{RED}}}$, the set of all candidate points with at least two projections in $\text{E}_{45}$ and one in $\text{O}_{\text{RED}}$. The set  $\mathcal{C}_{\text{E}_{45}\times\text{O}_{\text{RED}}\times\text{O}_{\text{RED}}}$ turns out to be empty.

To construct the set $\mathcal{C}_{\text{E}_{45}\times\text{O}_{\text{ID}}\times\text{O}_{\text{ID}}}$, we proceed as follows: firstly we construct the set $\widetilde{\mathcal{C}}_{\text{E}_{45}\times\text{O}_{\text{ID}}\times\text{O}_{\text{ID}}}$, where one over the three projections $\widehat{q},\widecheck{q},\widebar{q}$ is in the set $\text{E}_{45}$ and two of the remaining projections are in the set $\text{O}_{\text{ID}}$, then, applying Lemma \ref{lm:pperms} we generate the whole set $\mathcal{C}_{\text{E}_{45}\times\text{O}_{\text{ID}}\times\text{O}_{\text{ID}}}$.

The set $\widetilde{\mathcal{C}}_{\text{E}_{45}\times\text{O}_{\text{ID}}\times\text{O}_{\text{ID}}}$ is the union of the following three sets of \textit{candidate} points $p$:

\noindent (A2.1)~~$\widebar{\widetilde{\mathcal{C}}}_{\text{E}_{45}\times\text{O}_{\text{ID}}\times\text{O}_{\text{ID}}}$: \textit{candidate} points $p$ with $\widehat{q},\widecheck{q} \in \text{O}_{\text{ID}}$, $\widebar{q}\in\text{E}_{45}$.

\noindent (A2.2)~~$\widecheck{\widetilde{\mathcal{C}}}_{\text{E}_{45}\times\text{O}_{\text{ID}}\times\text{O}_{\text{ID}}}$: \textit{candidate} points $p$ with $\widehat{q},\widebar{q} \in \text{O}_{\text{ID}}$, $\widecheck{q}\in\text{E}_{45}$.

\noindent (A2.3)~~$\widehat{\widetilde{\mathcal{C}}}_{\text{E}_{45}\times\text{O}_{\text{ID}}\times\text{O}_{\text{ID}}}$: \textit{candidate} points $p$ with $\widebar{q},\widecheck{q} \in \text{O}_{\text{ID}}$, $\widehat{q}\in\text{E}_{45}$.

Here, we state only the algorithm that generates the subset (A2.1), the other algorithms for the subsets (A2.2) and (A2.3) can be derived in a similar way. 
The algorithm is based on the following result, which is an obvious consequence of Lemma \ref{lm:ok006}:
\begin{lemma}\label{lm:ok002b}
If a point $p\in \mathcal A$, is such that $\widehat{q},\widecheck{q}\in\text{O}_{\text{ID}}$, then $\widebar{q}$ must satisfy:
\begin{equation}
\widebar{q}_2 = \widehat{\epsilon}\widecheck{\epsilon} \widebar{q}_1,~\widebar{q}_{32} = \widehat{\epsilon}\widecheck{\epsilon}\widebar{q}_{31},\label{eq:id2oid1ltA}
\end{equation}
and $p$ is such that:
\begin{align}
p_1 &= \widebar{q}_1,~p_2=\widecheck{\epsilon} \widebar{q}_{31},~p_3 = \widehat{\epsilon}\widecheck{\epsilon} \widebar{q}_1,~p_4=\widebar{q}_3,~p_{21} = \widecheck{\epsilon} \widebar{q}_3,~p_{31}=\widebar{q}_{21},~p_{32} = \widehat{\epsilon} \widebar{q}_3,~\nonumber\\
&p_{41} =\widebar{q}_{31},~p_{42} = \widecheck{\epsilon} \widebar{q}_1,~p_{43} = \widehat{\epsilon}\widecheck{\epsilon} \widebar{q}_{31},~p_{432} = \widehat{\epsilon} 2,~p_{431} = \widebar{q}_\infty,~p_{421} = \widecheck{\epsilon} 2.\label{eq:id2oid1lt}
\end{align}
\end{lemma}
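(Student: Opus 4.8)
The plan is to deduce this directly from Lemma~\ref{lm:ok006} by applying it once to each of the two degenerate projections $\widehat q$ and $\widecheck q$, and then combining the resulting trace relations through the coordinate dictionary \eqref{projq} (equivalently, through Table~\ref{tb:matching}). As everywhere in this section, the statement is read under the standing assumption that $p$ is relevant: a non-relevant $p$ with one of $M_1,\dots,M_4$ equal to $\pm\mathbb{I}$ can easily have $\widehat q,\widecheck q\in\text{O}_{\text{ID}}$ without satisfying the conclusion, and such points are discarded from the classification anyway.

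The first step is to locate the degenerate matrix inside each triple. By Theorem~\ref{thm:match}, $\widehat q$ parametrizes $(M_2,M_3,M_4)$ with $\widehat N_\infty=(M_4M_3M_2)^{-1}$, and $\widecheck q$ parametrizes $(M_1,M_2,M_4)$ with $\widecheck N_\infty=(M_4M_2M_1)^{-1}$. Since $\widehat q\in\text{O}_{\text{ID}}$ (Definition~\ref{df:id}) and $p$ is relevant, the matrix of the triple equal to $\pm\mathbb{I}$ cannot be one of $M_2,M_3,M_4$; hence $M_4M_3M_2=\widehat\epsilon\,\mathbb{I}$ for some $\widehat\epsilon\in\{\pm1\}$, and likewise $M_4M_2M_1=\widecheck\epsilon\,\mathbb{I}$ for some $\widecheck\epsilon\in\{\pm1\}$.

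The second step is to apply Lemma~\ref{lm:ok006}. It yields the relations \eqref{eq:okrecond5} for $\widehat q$ and for $\widecheck q$ separately, all carrying the single sign $\widehat\epsilon$ (respectively $\widecheck\epsilon$), since $M_4M_3M_2=\widehat\epsilon\,\mathbb{I}$ forces each factor to equal $\widehat\epsilon$ times the inverse of the product of the other two. Rewriting \eqref{eq:okrecond5} through \eqref{projq} turns it into linear identities among the components of $p$: from $\widehat q$ one gets $p_{32}=\widehat\epsilon\,p_4$, $p_{42}=\widehat\epsilon\,p_3$, $p_{43}=\widehat\epsilon\,p_2$, $p_{432}=2\widehat\epsilon$, and from $\widecheck q$ one gets $p_{21}=\widecheck\epsilon\,p_4$, $p_{41}=\widecheck\epsilon\,p_2$, $p_{42}=\widecheck\epsilon\,p_1$, $p_{421}=2\widecheck\epsilon$. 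Comparing the two formulas for $p_{42}$ gives $p_3=\widehat\epsilon\widecheck\epsilon\,p_1$, and eliminating $p_2$ between $p_{43}=\widehat\epsilon\,p_2$ and $p_{41}=\widecheck\epsilon\,p_2$ gives $p_{43}=\widehat\epsilon\widecheck\epsilon\,p_{41}$.

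The last step is pure bookkeeping: in the coordinates of the remaining projection $\widebar q=(p_1,p_3,p_4,p_{431},p_{31},p_{41},p_{43})$, the two relations just obtained are exactly \eqref{eq:id2oid1ltA}, and substituting $p_2=\widecheck\epsilon\,\widebar q_{31}$, $p_3=\widehat\epsilon\widecheck\epsilon\,\widebar q_1$ together with the identities above and the tautologies $p_1=\widebar q_1$, $p_4=\widebar q_3$, $p_{31}=\widebar q_{21}$, $p_{41}=\widebar q_{31}$, $p_{431}=\widebar q_\infty$ reproduces \eqref{eq:id2oid1lt} entry by entry. There is no genuine obstacle here; the only care required is to keep the three coordinate dictionaries in \eqref{projq} straight and to propagate the signs $\widehat\epsilon,\widecheck\epsilon$ consistently.
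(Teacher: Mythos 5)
Your proof is correct and follows exactly the route the paper intends: the paper states this lemma as an ``obvious consequence'' of Lemma~\ref{lm:ok006}, and your argument simply carries that out, applying Lemma~\ref{lm:ok006} (with the single consistent sign $\widehat\epsilon$, resp.\ $\widecheck\epsilon$, coming from $M_4M_3M_2=\widehat\epsilon\,\mathbb{I}$ and $M_4M_2M_1=\widecheck\epsilon\,\mathbb{I}$, the relevance of $p$ ruling out $M_i=\pm\mathbb{I}$) and translating through the projections \eqref{projq}. The sign bookkeeping and the resulting identities \eqref{eq:id2oid1ltA}--\eqref{eq:id2oid1lt} all check out entry by entry.
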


\begin{al}\label{al:007}\quad
\begin{enumerate}
\item Take $\widebar{q} \in \text{E}_{45}$.
\item Check if $\widebar{q}$ satisfies: 
$$
\widebar{q}_2 = \widehat{\epsilon}\widecheck{\epsilon} \widebar{q}_1,\quad\hbox{and}\quad 
\widebar{q}_{32} = \widehat{\epsilon}\widecheck{\epsilon}\widebar{q}_{31},
$$
then go to the next Step, otherwise go to Step 1.
\item Determine the components of $p$ involved in identities \eqref{eq:id2oid1lt}. 
\item Determine the values $p^{(i)}_{321}$, for $i=1,2$, using equation \eqref{eq:f321}.
\item[] For each $i=1,2$:
\item Calculate the values of $p_\infty^{(i)}$ using equation \eqref{eq:f1}.
\item Use identities given by the columns of Table \ref{tb:matching} in order to determine the other components of $p^{(i)}$.
\item If $p^{(i)}$ satisfies equations \eqref{eq:f6}-\eqref{eq:f11} then go to the next Step, otherwise Step 1.
\item Save $p^{(i)}$ in the set  $\widebar{\widetilde{\mathcal{C}}}_{\text{E}_{45}\times\text{O}_{\text{ID}}\times\text{O}_{\text{ID}}}$, and go to Step 1.
\end{enumerate}
\end{al}
When Algorithm \ref{al:007} and the algorithms for subsets (A2.2) and (A2.3) end, the following set is obtained:
\begin{equation*}
\widetilde{\mathcal{C}}_{\text{E}_{45}\times\text{O}_{\text{ID}}\times\text{O}_{\text{ID}}} = \widebar{\widetilde{\mathcal{C}}}_{\text{E}_{45}\times\text{O}_{\text{ID}}\times\text{O}_{\text{ID}}} \cup \widecheck{\widetilde{\mathcal{C}}}_{\text{E}_{45}\times\text{O}_{\text{ID}}\times\text{O}_{\text{ID}}} \cup \widehat{\widetilde{\mathcal{C}}}_{\text{E}_{45}\times\text{O}_{\text{ID}}\times\text{O}_{\text{ID}}},
\end{equation*}
then, by Lemma \ref{lm:pperms}, we generate the set $\mathcal{C}_{\text{E}_{45}\times\text{O}_{\text{ID}}\times\text{O}_{\text{ID}}}$ as:
\begin{equation}\label{eq:bigC2OID1LT}
\mathcal{C}_{\text{E}_{45}\times\text{O}_{\text{ID}}\times\text{O}_{\text{ID}}} = \widetilde{\mathcal{C}}_{\text{E}_{45}\times\text{O}_{\text{ID}}\times\text{O}_{\text{ID}}} \bigcup_{i=1}^{3} 
					\pi_{(1 2 3 4)}^{i}(\widetilde{\mathcal{C}}_{\text{E}_{45}\times\text{O}_{\text{ID}}\times\text{O}_{\text{ID}}}),
\end{equation}
where permutation $\pi_{(1 2 3 4)}$ is defined in Lemma \ref{lm:pperms}. This set contains $6,385 $ points and Algorithm \ref{al:007} can be found in \cite{ouralgorithms}.\\

We proceed in a similar way to construct the set $\mathcal{C}_{\text{E}_{45}\times\text{E}_{45}\times\text{O}_{\text{RED}}}$ of all \textit{candidate} points $p\in \hhmg2$ such that one over the four projections $\widehat{q},\widecheck{q},\widebar{q},\widetilde{q}$ is in the set $\text{O}_{\text{RED}}$ and two of the remaining projections are in the set $\text{E}_{45}$. We give here only the algorithm such that $\widehat{q},\widecheck{q} \in \text{E}_{45}$, $\widebar{q}\in\text{O}_{\text{RED}}$ - all other cases can be derived in similar way.

\begin{al}\label{al:008}\quad
\begin{enumerate}
\item Consider $\widehat{q},\widecheck{q} \in \text{E}_{45}\times \text{E}_{45}$.
\item Check if $\widehat{q},\widecheck{q}$ satisfy relations given by the columns of the first and third rows of Table \ref{tb:matching} then go to the next Step, otherwise go to Step 1.
\item Calculate $p_{31}$ and $p_{431}$ using Table \ref{tb:matching} and conditions \eqref{eq:okrecond4}.
\item Determine the values $p^{(i)}_{321}$, for $i=1,2$, using equation \eqref{eq:f321}.
\item[] For each $i=1,2$:
\item Calculate the values of $p_\infty^{(i)}$ using equation \eqref{eq:f1}.
\item Use identities given by the columns of Table \ref{tb:matching} in order to determine the other components of $p^{(i)}$.
\item If $p^{(i)}$ satisfies equations \eqref{eq:f6}-\eqref{eq:f11} then go to the next Step, otherwise Step 1.
\item Save $p^{(i)}$ in the set  $\widebar{\widetilde{\mathcal{C}}}_{\text{E}_{45}\times\text{E}_{45}\times\text{O}_{\text{RED}}}$, and go to Step 1.
\end{enumerate}
\end{al}
When Algorithm \ref{al:008} and the analogous algorithms for $\widebar{q},\widecheck{q} \in \text{E}_{45}$, $\widehat{q}\in\text{O}_{\text{RED}}$ and for
$\widebar{q},\widehat{q} \in \text{E}_{45}$, $\widecheck{q}\in\text{O}_{\text{RED}}$ respectively end, 
the  set $\widetilde{\mathcal{C}}_{\text{E}_{45}\times\text{E}_{45}\times\text{O}_{\text{RED}}}$ is obtained.
Then, as before, the set $\mathcal{C}_{\text{E}_{45}\times\text{E}_{45}\times\text{O}_{\text{RED}}}$  is given by:
$$
\mathcal{C}_{\text{E}_{45}\times\text{E}_{45}\times\text{O}_{\text{RED}}} = \widetilde{\mathcal{C}}_{\text{E}_{45}\times\text{E}_{45}\times\text{O}_{\text{RED}}} \bigcup_{i=1}^{3} 
					\pi_{(1 2 3 4)}^{i}(\widetilde{\mathcal{C}}_{\text{E}_{45}\times\text{E}_{45}\times\text{O}_{\text{RED}}}).
$$

This set contains $342, 368$ points and Algorithm \ref{al:008} can be found in \cite{ouralgorithms}.

We now produce the algorithm that generates the set $\mathcal{C}_{\text{E}_{45}\times\text{E}_{45}\times\text{O}_{\text{ID}}}$ of all \textit{candidate} points $p\in\mathcal A$ such that one projection is in the set $\text{O}_{\text{ID}}$ and two of the remaining three projections are in the set $\text{E}_{45}$. We give here only the algorithm such that $\widehat{q},\widecheck{q} \in \text{E}_{45}$, $\widebar{q}\in\text{O}_{\text{ID}}$ - all other cases can be derived in similar way. This is a simple adaptation of Algorithm \ref{al:008} in which we substitute step (2) and (8):

\begin{al}\label{al:009}\quad

(1), (2), (4), (5), (6), (7) see Algorithm \ref{al:008}.

(3) Calculate $p_{31}$ and $p_{431}$ using Table \ref{tb:matching} and conditions \eqref{eq:okrecond5}.

(8)  Save $p^{(i)}$ in the set  $\widebar{\widetilde{\mathcal{C}}}_{\text{E}_{45}\times\text{E}_{45}\times\text{O}_{\text{ID}}}$, and go to Step 1.

\end{al}

When Algorithm \ref{al:009} and the analogous algorithms for $\widebar{q},\widecheck{q} \in \text{E}_{45}$, $\widehat{q}\in\text{O}_{\text{ID}}$ and for
$\widebar{q},\widehat{q} \in \text{E}_{45}$, $\widecheck{q}\in\text{O}_{\text{ID}}$ respectively end, we obtain $\widetilde{\mathcal{C}}_{\text{E}_{45}\times\text{E}_{45}\times\text{O}_{\text{ID}}}$, then as before:
$$
\mathcal{C}_{\text{E}_{45}\times\text{E}_{45}\times\text{O}_{\text{ID}}} = \widetilde{\mathcal{C}}_{\text{E}_{45}\times\text{E}_{45}\times\text{O}_{\text{ID}}} \bigcup_{i=1}^{3} 
					\pi_{(1 2 3 4)}^{i}(\widetilde{\mathcal{C}}_{\text{E}_{45}\times\text{E}_{45}\times\text{O}_{\text{ID}}}).
$$

This set contains $ 245, 760$ points, and Algorithm \ref{al:009} can be found in \cite{ouralgorithms}.

Finally the set of all candidate points is:
\begin{equation}
\mathcal C =\mathcal{C}_{\text{E}_{45}\times \text{E}_{45}\times \text{E}_{45}}   \cup \mathcal{C}_{\text{E}_{45}\times\text{O}_{\text{ID}}\times\text{O}_{\text{ID}}} \cup\mathcal{C}_{\text{E}_{45}\times\text{E}_{45}\times\text{O}_{\text{RED}}} \cup \mathcal{C}_{\text{E}_{45}\times\text{E}_{45}\times\text{O}_{\text{ID}}}.
\end{equation}
This is a finite set consisting of $3, 461, 273$ points (duplicated points are erased). 
We re-define this set by throwing away all points that produce $M_\infty=\pm\mathbb I$, so that the resulting set $\mathcal{C}$ has $3,287,140$ elements.

\section{Extracting finite orbits}

Now, we need to determine which points in $\mathcal{C}$ lead to a finite orbit of the $P_4$-action. The following result is fundamental to achieve this:

\begin{lemma}
Let $p\in\mathcal{C}$ a \textit{candidate} point, then its orbit is finite if and only if $\beta(p)\in\mathcal{C}$ for every braid $\beta\in P_4$.
\end{lemma}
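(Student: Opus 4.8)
The plan is to prove the two implications separately, the forward one being immediate and the reverse one resting on the machinery of Section~\ref{sec:method}. If $\beta(p)\in\mathcal{C}$ for every $\beta\in P_4$, then $\mathcal{O}_{P_4}(p)=\{\beta(p)\mid\beta\in P_4\}$ is a subset of the finite set $\mathcal{C}$, hence finite; nothing more is needed for this direction.

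For the converse, assume $\mathcal{O}_{P_4}(p)$ is finite, fix an arbitrary $\beta\in P_4$, and set $p':=\beta(p)$; one has to check that $p'$ is again a \emph{candidate} point. Since $p'$ lies in the same finite orbit, for each $i=1,\dots,4$ the $H_i$-orbit of $p'$ is contained in $\mathcal{O}_{P_4}(p)$, hence finite. By Theorem~\ref{thm:match}, together with Table~\ref{tb:matching} and the projections \eqref{projq}, the subgroup $H_i$ acts on $p'$ exactly as the pure braid group $P_3$ acts on the $i$-th projection of $p'$, so each of the four projections of $p'$ generates a finite $P_3$-orbit. One also records here that relevance in the sense of Definition~\ref{df:notrelevant} is invariant along the orbit: the traces $p_1,\dots,p_4,p_\infty$ are fixed by the whole $P_4$-action, the monodromy group $\langle M_1,\dots,M_4\rangle$ is preserved because $P_4$ acts through automorphisms of the free group on the $\gamma_i$, and whether $M_i=\pm\mathbb{I}$ depends only on the conjugacy class of $M_i$; thus $p'$ is relevant whenever $p$ is.

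It remains to show that at most one of the four projections of $p'$ is a Picard or Kitaev orbit, and this is the crux. The starting point is again Theorem~\ref{thm:match}: since $H_i$ acts on the $i$-th projection through the PVI braid group $P_3$, it cannot change the Lisovyy--Tykhyy type (fixed point, Kitaev, Picard, exceptional) of that projection, so only braids in $P_4\setminus H_i$ could do so. To exclude that such a braid produces, out of a candidate point, a point with two Picard/Kitaev projections, I would exploit that the triple of local parameters $(q_1,q_2,q_3)$ of each projection is read off from $p_1,\dots,p_4$, hence is invariant along the orbit, while the fourth parameter of each projection takes only finitely many values on the finite orbit; combining this with the classification of \cite{LT} one checks that the parameter constraints defining the Picard and Kitaev families are incompatible --- on a relevant point and under the matching constraints of Section~\ref{se:mp} --- with the presence of a second Picard/Kitaev projection. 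In other words, the property ``at most one Picard/Kitaev projection'' is an invariant of the finite $P_4$-orbit. I expect this last step to be the main obstacle; everything else is bookkeeping on top of Theorem~\ref{thm:match} and the finiteness of $\mathcal{C}$.
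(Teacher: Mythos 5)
Your forward implication and the first half of your converse coincide with what the paper actually does: finiteness of $\mathcal{O}_{P_4}(p)$ follows from $\mathcal{O}_{P_4}(p)\subseteq\mathcal{C}$ because $\mathcal{C}$ is finite, and conversely every image $\beta(p)$ lies in the same finite orbit, so by Theorem \ref{thm:match} each of its four projections \eqref{projq} has a finite $P_3$-orbit. Up to that point you are doing, in slightly more detail, exactly what the paper's two-line proof does (your remarks on relevance and on the invariance of $p_1,\dots,p_4,p_\infty$ are correct but not needed).

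The genuine gap is the step you yourself flag as the crux: you never prove that at most one projection of $\beta(p)$ is of Picard or Kitaev type, and the route you sketch cannot work as stated. Knowing that $(q_1,q_2,q_3)$ of each projection is fixed along the orbit and that the fourth parameter takes finitely many values does not make a second Picard/Kitaev projection ``incompatible'': Kitaev orbits occur for continuous families of parameter values, and the paper itself records (in the Outlook, and in the discussion of Girand's solutions \cite{MR3499084}) that there exist finite orbits of the Garnier system with two or more Kitaev projections, so no argument based purely on parameter constraints can exclude this; any honest proof would have to use the hypothesis $p\in\mathcal{C}$, not merely finiteness of the orbit. Be aware, though, that the paper's own proof does not attempt this step either: it simply asserts that a point with finite $P_4$-orbit must lie in $\mathcal{C}$, i.e.\ it implicitly reads membership in $\mathcal{C}$ as ``all four projections have finite $P_3$-orbits'' and treats the Picard/Kitaev restriction as part of the declared scope of the classification, and in the sequel only the easy direction (an orbit contained in the finite set $\mathcal{C}$ is finite) is actually used to justify Algorithm \ref{al:002}. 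So your instinct that the ``at most one Picard/Kitaev projection'' clause requires a separate argument is sound, but your proposal does not close it, and as written it proves less than the statement claims.
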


\begin{proof}
Suppose $\beta(p)\in\mathcal{C}$ for every $\beta\in P_4$, then the orbit is finite since $\mathcal{C}$ is finite too. Vice versa, suppose $p$ has a finite $P_4$-orbit, then for every $\beta$, $\beta(p)$ must have a finite orbit. Hence, $\beta(p)$ must be an element of $\mathcal{C}$. 
\end{proof}

Therefore, to select the finite orbits is equivalent to find the subset $\mathcal{C}_0\subset\mathcal{C}$ such that:
\begin{equation}
\mathcal{C}_0 = \lbrace p\in\mathcal{C}~|~\beta(p)\in\mathcal{C},~\beta\in P_4 \rbrace.
\end{equation}

To construct the set $\mathcal C_0$ we use  the following:
\begin{al}\label{al:002}\quad 
\begin{enumerate}
\item Consider $p \in \mathcal{C}$. 
\item Apply to it all the generators \eqref{mypuregens} of $P_4$.\item If there exists an $i=1,\dots,6$ such that $p^{(i)} \notin \mathcal{C}$ then delete $p$ from the set $\mathcal{C}$ and go to Step 1, otherwise save $p$ in $\mathcal{C}_0$ and go to Step 1.
\end{enumerate}
\end{al}

This algorithm ends when in the set $\mathcal{C}$ there are no more elements to delete. The final set $\mathcal C_0$ contains $1, 270, 050$ points and Algorithm \ref{al:002} can be found in \cite{ouralgorithms}.

Note that $\mathcal{C}_0$ contains only elements that generate finite orbits under the $P_4$-action. In fact, assume by contraction that $p\in \mathcal C_0$ has an infinite orbit. Then there exists a braid $\beta$ such that $\beta(p) \not\in\mathcal C$.
Now every braid $\beta\in P_4$ can be thought as an ordered 
combination of generators $\beta_{ij}$:
\begin{equation}\label{eq:lenghtword}
\beta = \underbrace{\beta_{i'j'}\dots\beta_{ij}}_{n},
\end{equation}
where $n$ indicates the length of the word. Let us introduce the following notation:
\begin{equation}\label{eq:recurs}
p^{(0)} = p,\quad p^{(1)} = \beta_{ij}(p^{(0)}), \dots,
p^{(n)} = \beta(p) = \beta_{i'j'}(p^{(n-1)}) = \underbrace{\beta_{i'j'}\dots\beta_{ij}}_{n}(p^{(0)}).
\end{equation}
Since we supposed $p^{(n)}\notin\mathcal{C}$, Algorithm \ref{al:002} deletes $p^{(n-1)}$ from the set $\mathcal{C}$. In the next iteration it deletes $p^{(n-2)}$ and so on, till when $p^{(0)} = p$ is deleted from $\mathcal{C}$, and therefore $p$ is not in $\mathcal C_0$, contradicting our hypothesis.

\section{Extracting non-equivalent orbits}

In this section we quotient the set $C_0$ of all points $p$ giving rise to a finite orbit with respect to the action of the pure braid group, so that we select only one representative point for every finite orbit, and by the action of the symmetry group $G$ of  \hmg2 described in the next theorem proved in the Appendix.

\begin{theorem}\label{thm:symm-gr} The group 
\begin{equation}
G:=\langle P_{1 3},P_{2 3},P_{3 4},P_{1 \infty},\rm{sign}_1,\dots,\rm{sign}_4,\pi_{(1 2)(34)},\pi_{(1 2 3 4)}\rangle
\label{eq:groupG}
\end{equation}
where
\begin{align}
P_{1 3}(p) =& \sigma_2 \sigma_1^{-1} \sigma_2^{-1}(p),\label{eq:P13}\\
P_{2 3}(p) =& \sigma_2 \sigma_1^{-1} \sigma_2^{-1} \sigma_1^{-1} \sigma_2 \sigma_1^{-1} \sigma_2^{-1}(p),\label{eq:P23}\\
P_{3 4}(p) =& \sigma_3\sigma_2 \sigma_1^{-1} \sigma_2^{-1} \sigma_3^{-1} \sigma_2 \sigma_1^{-1} \sigma_2^{-1} \sigma_3\sigma_2 \sigma_1^{-1}(p),\label{eq:P34}\\
P_{1 \infty}(p) =& (-p_\infty, p_2, p_3, p_4, -p_1, p_2 p_\infty - p_{432} p_{21} + p_{43} p_1 - p_{431}, \nonumber\\& p_3 p_\infty - p_{43} p_{321} + p_4 p_{21} - p_{421}, p_{32}, p_{321}, p_{42}, p_{43},\nonumber\\& p_{32} p_\infty - p_{432} p_{321} + p_4 p_1 - p_{41}, p_{432}, p_{21},\nonumber\\& p_2 p_{321} - p_{32} p_{21} + p_3 p_1 - p_{31}),\label{eq:P1infty}
\end{align}
\begin{align}
\rm{sign}_1(p)= (&-p_1,p_2,p_3,p_4,-p_\infty,-p_{21},-p_{31},p_{32},-p_{41},p_{42},p_{43},-p_{321},p_{432},\nonumber\\&-p_{431},-p_{421}),\label{eq:sign1}\\
\rm{sign}_2(p)= (&p_1,-p_2,p_3,p_4,-p_\infty,-p_{21},p_{31},-p_{32},p_{41},-p_{42},p_{43},-p_{321},-p_{432},\nonumber\\&p_{431},-p_{421}),\label{eq:sign2}\\
\rm{sign}_3(p)= (&p_1,p_2,-p_3,p_4,-p_\infty,p_{21},-p_{31},-p_{32},p_{41},p_{42},-p_{43},-p_{321},-p_{432},\nonumber\\&-p_{431},p_{421}),\label{eq:sign3}\\
\rm{sign}_4(p)= (&p_1,p_2,p_3,-p_4,-p_\infty,p_{21},p_{31},p_{32},-p_{41},-p_{42},-p_{43},p_{321},-p_{432},-\nonumber\\&-p_{431},p_{421}).\label{eq:sign4}
\end{align}
\begin{align}
\pi_{(1 2)(34)}(p) = (&p_{2}, p_{1}, p_{4}, p_{3}, p_{\infty}, p_{21}, p_{42}, p_{41}, p_{32}, p_{31}, p_{43}, p_{421}, p_{431}, p_{432}, p_{321}),\label{eq:pgen12}\\
\pi_{(1 2 3 4)}(p) = (&p_4,p_1,p_2,p_3,p_{\infty},p_{41},p_{42},p_{21},p_{43},p_{31},p_{32},p_{421},p_{321},p_{432},p_{431}).\label{eq:pgen1234}
\end{align}
is the group of symmetries for  \hmg2.
\end{theorem}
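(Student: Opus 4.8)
The statement asserts that $G$ \emph{equals} the symmetry group of \hmg2, so two things must be shown: that each listed generator is a symmetry, and that there are no others. Throughout, I read a symmetry as a polynomial automorphism of the ambient space $\complessi^{15}$, acting on the $15$-tuple $p$, which for generic parameters $p_1,\dots,p_4,p_\infty$ preserves the defining ideal $I=\langle f_1,\dots,f_{15}\rangle$ of $\mathcal A$ — so that it descends to an automorphism of the variety of Theorem~\ref{thm:6indep} — and normalises the $P_4$-action induced by \eqref{eq:braidpaction}, hence carries finite $P_4$-orbits to finite $P_4$-orbits. The plan is to organise the proof around these two tasks.

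For the first task, the generators $P_{13},P_{23},P_{34}$ of \eqref{eq:P13}--\eqref{eq:P34} are by construction words in $\sigma_1,\sigma_2,\sigma_3$, hence elements of the $B_4$-action: they preserve $I$ by Lemma~\ref{lm:u2}, and they normalise the $P_4$-action because $P_4$ is a normal subgroup of $B_4$. (Tracing their effect on the loops $\gamma_1,\dots,\gamma_4$ identifies them with conjugates of the half-twists exchanging the punctures $\{1,3\}$, $\{2,3\}$, $\{3,4\}$.) The generators $\pi_{(12)(34)}$, $\pi_{(1234)}$ and $\mathrm{sign}_1,\dots,\mathrm{sign}_4$ I would treat structurally: a genuine relabelling of the punctures of $\Sigma_5$ is induced by a self-homeomorphism, hence gives an automorphism of $\pi_1(\Sigma_5)$ conjugating the mapping class group action into itself, in particular normalising the $P_4$-action; and $\mathrm{sign}_i$ is $M_i\mapsto -M_i$ (forcing $M_\infty\mapsto -M_\infty$ to preserve \eqref{eq:cyclic}), which commutes with simultaneous conjugation and with the braid action since $-\mathbb I$ is central in $\mathrm{SL}_2(\complessi)$. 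In each of these cases one must still check, by direct substitution, that $f_1,\dots,f_{15}$ are permuted among themselves up to sign and $\complessi$-linear combinations — using the relations $f_j=0$ where necessary — precisely the kind of computation carried out for $\sigma_1$ in the proof of Lemma~\ref{lm:u2}; this is finite but lengthy. Finally $P_{1\infty}$ of \eqref{eq:P1infty} corresponds to the homeomorphism of the five-punctured sphere that swaps the puncture $1$ with the puncture at infinity, realised on monodromy as $M_\infty\mapsto -M_1^{-1}$; the extra minus sign accounts for the $-p_\infty$ and $-p_1$ entries, since $2\cos\pi(\theta+1)=-2\cos\pi\theta$. Its explicit coordinate form is then obtained, and shown to preserve $I$, by iterating the skein relation \eqref{eq:rule1} together with \eqref{eq:cyclic}, exactly the toolkit used to prove Theorem~\ref{thm:6indep}.

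The second task — proving that $G$ exhausts the symmetries — is the main obstacle, and is where I would invoke the Riemann--Hilbert correspondence \cite{Bol}: for generic parameters it identifies a symmetry of \hmg2 normalising the braid group action with a B\"acklund transformation of the Garnier system \g2, that is, a birational canonical transformation of the isomonodromic family \eqref{N1in} onto itself, acting on $\theta_1,\dots,\theta_4,\theta_\infty$ and on the pole positions $a_1,\dots,a_4,\infty$ in one of the admissible ways. The group of these transformations is known from the classification of B\"acklund transformations of the Schlesinger equations in \cite{MazzoccoCanonical}: it is generated by the symmetric group $S_5$ permuting the five singular points together with the four sign changes $\theta_i\mapsto-\theta_i$. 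It then remains to match this abstract generating set with the explicit list \eqref{eq:P13}--\eqref{eq:pgen1234}. The transpositions inside $\{1,2,3,4\}$ are supplied by $P_{13},P_{23},P_{34}$ \emph{and} by $\pi_{(12)(34)},\pi_{(1234)}$ — both are needed because a puncture permutation determines the induced automorphism of the $M_i$ only up to conjugation, i.e.\ up to an element of the $P_4$-action, so the half-twist realisations and the genuine coordinate permutations differ by exactly such an element — while $P_{1\infty}$ supplies the transposition with $\infty$, yielding all of $S_5$; the maps $\mathrm{sign}_i$ realise the group $(\mathbb Z/2)^4$ of sign changes. Carrying out this identification precisely — computing each B\"acklund transformation's action on the co-adjoint coordinates from the formulas of \cite{MazzoccoCanonical} and recognising it as a word in the generators of $G$ — is the detailed work of the Appendix. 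One last point is to exclude any further symmetry acting on the parameters alone: an automorphism preserving $\mathcal A$ fibers over an affine map of $(p_1,\dots,p_4,p_\infty)$-space, and comparison with the shape of $f_1,\dots,f_5$ forces this map into the $S_5\ltimes(\mathbb Z/2)^4$ already accounted for, so no new symmetry can appear.
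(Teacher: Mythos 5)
Your proposal and the paper's proof diverge in what they actually set out to establish. The paper's own argument (in the Appendix) proves only the positive containment, namely that every generator of $G$ is a symmetry in the sense of Definition~\ref{df:symmetry}: for $P_{13},P_{23},P_{34},P_{1\infty}$ this holds ``by construction'', because they are the monodromy manifestations of Kimura's birational (B\"acklund) transformations of the Garnier system, which preserve the number of branches of algebraic solutions; for the sign flips and the permutations the paper verifies explicit intertwining relations with the braid generators, e.g. $\sigma_1\,\mathrm{sign}_2=\mathrm{sign}_1\,\sigma_1$, $\sigma_2\,\pi_{(1 2 3 4)}=\pi_{(1 2 3 4)}\,\sigma_1$, which show that these maps send $P_4$-orbits bijectively to $P_4$-orbits of the same cardinality. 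Your treatment of this part is acceptable and somewhat different in method: handling $P_{13},P_{23},P_{34}$ as words in $\sigma_1,\sigma_2,\sigma_3$ together with normality of $P_4$ in $B_4$ is clean and essentially equivalent, and your structural treatment of the relabellings and of $M_i\mapsto -M_i$ can be made to work, though note the small imprecision that $\mathrm{sign}_i$ does \emph{not} commute with the braid action (the braids permute the sign flips among themselves, as the displayed relations show); what matters is that the group they generate is normalized. Also, the explicit checks you defer (that $f_1,\dots,f_{15}$ are preserved, and the coordinate form of $P_{1\infty}$ via the skein relation) are exactly the computations the paper performs or delegates to \cite{MazzoccoCanonical}, so that part is a difference of bookkeeping rather than of substance.

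The genuine gap is in your second task, the claim that $G$ \emph{exhausts} the symmetries. First, the paper never proves this: its proof establishes that $G$ is \emph{a} group of symmetries, which is all that is used in the classification (quotienting by $G$ cannot merge orbits of different length), so you are attempting a strictly stronger statement. Second, your argument for it does not go through as written. A symmetry in the sense of Definition~\ref{df:symmetry} is merely an invertible map of $\hhmg2$ preserving the cardinality of orbits; nothing forces it to be polynomial or birational, to normalize the $P_4$-action, or to be induced by an isomonodromic transformation of \eqref{N1in}, so the step identifying an arbitrary symmetry with a B\"acklund transformation of $\mathcal G_2$ via the Riemann--Hilbert correspondence is unjustified. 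Moreover, \cite{kimura1990} and \cite{MazzoccoCanonical} \emph{construct} the $S_5$ action and its lift to the monodromy manifold; they do not prove that $S_5\ltimes(\mathbb Z/2)^4$ is the full group of B\"acklund transformations of the Garnier system, so even granting your reduction the completeness claim would rest on an unproved classification. The closing argument that any symmetry ``fibers over an affine map of $(p_1,\dots,p_4,p_\infty)$-space'' constrained by the shape of $f_1,\dots,f_5$ is likewise not a proof, since it presupposes the polynomial/fibered structure you have not established. If you drop the exhaustiveness claim and complete the deferred verifications (either your ideal-preservation checks or the paper's commutation relations), your argument matches what the paper actually proves.
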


\subsection{Points belonging to the same orbit}

In this sub-section we explain how to  take the following quotient: 
\begin{equation*}
\mathcal{C}_1 := \mathcal{C}_0 / P_4.
\end{equation*}

\begin{al}\label{al:0066}\quad For every $p \in \mathcal{C}_0$:
\begin{enumerate}
\item Calculate $\mathcal{O}_{P_4}(p)$.
\item Save $p\in\mathcal{C}_1$ and delete $\mathcal{O}_{P_4}(p)$ from $\mathcal{C}_0$ .
\end{enumerate}
\end{al}
Since the set $\mathcal{C}_0$ is finite, the algorithm ends. This algorithm produces the set $\mathcal{C}_1$, that contains $17,946$ finite orbits of the $P_4$-action.

\subsection{Quotient under the symmetry group $G$}

Our aim is to quotient $\mathcal{C}_1$ by the action of the symmetry group $G$, see Appendix A. Note that $G$ is an infinite group, however it acts as a finite group on $(p_1,p_2,p_3,p_4,p_\infty)$ and preserves the length of a $P_4$-orbit. Thanks to this fact we are able to set up a finite factorization  algorithm.
 
 We proceed as follows:  we factorize by the action of the finite subgroup:
\begin{equation}\label{eq:finitesubgrouppermsigns}
\langle \text{sign}_1,\dots,\text{sign}_4,\pi_{(1 2)(34)},\pi_{(1 2 3 4)} \rangle\subset \text{G},
\end{equation}
to obtain the set $\mathcal{C}_2'$.

\begin{al}\label{al:000}\quad
\begin{enumerate}
\item Consider $p\in\mathcal{C}_1$.
\item Remove from  $\mathcal{C}_1$ the set $\mathcal{O}_{P_{4}}(p)$ and save $p$ in the set $\mathcal{C}_2'$.
\item Apply to $p$ all transformations in $\left\langle \text{sign}_1,\dots,\text{sign}_4 \right\rangle$ and save the result in the set $A_0$.
\item[] For every $p'\in A_0$:
\item Apply to $p'$ all transformations in $\left\langle \pi_{(1 2)(34)},\pi_{(1 2 3 4)} \right\rangle$ and save the result in the set $A_1$.
\item[] For every $p''\in A_1$:
\item If $p''$ is in $\mathcal{C}_1$, then $\mathcal{O}_{P_{4}}(p)$ and $\mathcal{O}_{P_{4}}(p'')$ are equivalent. Remove $\mathcal{O}_{P_{4}}(p'')$ from $\mathcal{C}_1$. If $p''$ is not in $\mathcal{C}_1$, apply again the current Step to the next $p''$ in $A_1$.
\item If all possible choices of $p''$ in $A_1$ are exhausted go to Step 1.
\end{enumerate}
\end{al}
This algorithm ends when all choices of points $p$ in the finite set $\mathcal{C}_1$ are exhausted. The set $\mathcal{C}_2'$, created in this way, contains $122$ points, therefore this factorization reduces dramatically the number of orbits to be processed  from $17,946$ to $122$.  

Next, we subdivide the set $\mathcal{C}_2'$ into subsets that contain orbits of the same length and have the same $(p_1,p_2,p_3,p_4,p_\infty)$ modulo change of signs or permutations. This is useful because, since the action of G preserves the length of an orbit and that the $(p_1,p_2,p_3,p_4,p_\infty)$ remain invariant up to permutations and sign flips under the $G$ action, only points within the same subset can be related by a transformation in $G$.

\begin{al}\label{al:factorizationONE}\quad
\begin{enumerate}
\item Consider $p\in\mathcal{C}_2'$, with $|\mathcal{O}_{P_4}(p)| = N$, $N\in\mathbb{N}$.
\item Save $p$ in a set $A_N$.
\item Remove $p$ from $\mathcal{C}_2'$. 
\item[] For every $p'\in\mathcal{C}_2'$:
\item If $p'$ is such that:
\begin{itemize}
\item $|\mathcal{O}_{P_4}(p')| = N$.
\item $(p_1,p_2,p_3,p_4,p_\infty)$  and $(p_1',p_2',p_3',p_4',p_\infty')$ differ by change of signs or permutations.
\end{itemize}
Save $p'$ in $A_N$ and remove $p'$ from $\mathcal{C}_2'$, otherwise apply again this Step to another $p'\in\mathcal{C}_2'$.
\end{enumerate}
\end{al}
Since the set $\mathcal{C}_2'$ is finite, this algorithm ends when there are no more elements in $\mathcal{C}_2'$. This algorithm generates a finite list of $54$ subsets $A_N$, where $N$ is such that for every $p\in A_N$ we have $|\mathcal{O}_{P_4}(p)| = N$ and $(p_1,p_2,p_3,p_4,p_\infty)$  and $(p_1',p_2',p_3',p_4',p_\infty')$ differ by change of signs or permutations.

Then, within each subset $A_N$, for all the elements in the subset, we apply a transformation in $G$  in such a way that every element $p$ in the subset will have the same ordered $(p_1,p_2,p_3,p_4,p_\infty)$ and check if there is a $P_4$ transformation linking the points in the same $A_N$. This is done in the following:

\begin{al}\label{al:factorizationTWO}\quad For every subset $A_N$:
\begin{enumerate}
\item Choose a point $p\in A_N$ and save it in the set $\mathcal{C}_2$.
\item Remove $p$ from $A_N$. 
\item Act with $G$ on each element in the set $A_N$, producing a new set $A_N'$ in such a way that every element $p'$ in $A_N'$ will have: 
$$
(p_1',p_2',p_3',p_4',p_\infty')=(p_1,p_2,p_3,p_4,p_\infty).
$$
\item[] For every $p'\in A_N'$:
\item Generate the orbit of $p'$ under the action of $\langle P_{13},P_{23},P_{34} \rangle$;
if $p$ is in this orbit, then $\mathcal{O}_{P_{4}}(p)$ and $\mathcal{O}_{P_{4}}(p')$ are equivalent, otherwise save $p'$ in $\mathcal{C}_2$ and apply again this Step to another $p'\in A_N'$.
\item When all choices of $p'\in A_N'$ are exhausted, go to Step 1.
\end{enumerate}
\end{al}
Since the number of subsets $A_N$ is $54$, and each subset has a finite number of elements, this algorithm ends when there are no more subsets $A_N$ to process. It turns out that  for each set $A_N$ there is only one class of equivalence under the action of the group $G$.  This completes our classification of all finite orbits. We summarize the content of the set $\mathcal{C}_2$, in Table \ref{tb:classification}.

\begin{table}[!h]
\begin{center} 
\caption {}\label{tb:classification}
{\resizebox{\textwidth}{!}{
\begin{tabular}{| c | c || c | c | c | c || c || c | c | c | c | c | c |}
\hline
\rule{0pt}{2.5ex}\rule[-1.2ex]{0pt}{0pt}\#&sz.&$p_1$ &$p_2$ &$p_3$ &$p_4$ &$p_\infty$ &$p_{21}$ &$p_{31}$ &$p_{32}$ &$p_{41}$ &$p_{42}$ &$p_{43}$ \\
\hline
\rule{0pt}{2.5ex}\rule[-1.2ex]{0pt}{0pt}$1$&$36$&$1$ &$0$ &$\sqrt{2}$ &$0$ &$0$ &$-1$ &$0$ &$-\sqrt{2}$ &$0$ &$\sqrt{2}$ &$1$ \\
\hline
\rule{0pt}{2.5ex}\rule[-1.2ex]{0pt}{0pt}$2$&$36$&$1$ &$0$ &$1$ &$0$ &$0$ &$1$ &$0$ &$1$ &$1$ &$0$ &$1$ \\
\hline
\rule{0pt}{2.5ex}\rule[-1.2ex]{0pt}{0pt}$3$&$40$&$-1$ &$1$ &$\sqrt{2}$ &$1$ &$-\sqrt{2}$ &$-1$ &$-\sqrt{2}$ &$0$ &$1$ &$1$ &$\sqrt{2}$ \\
\hline
\rule{0pt}{2.5ex}\rule[-1.2ex]{0pt}{0pt}$4$&$40$&${-1+\sqrt{5}\over 2}$ &${-1+\sqrt{5}\over 2}$ &${1+\sqrt{5}\over 2}$ &${-1+\sqrt{5}\over 2}$ &$-{1+\sqrt{5}\over 2}$ &${1-\sqrt{5}\over 2}$ &$0$ &$1$ &${1-\sqrt{5}\over 2}$ &${1-\sqrt{5}\over 2}$ &$0$ \\
\hline
\rule{0pt}{2.5ex}\rule[-1.2ex]{0pt}{0pt}$5$&$40$&$-{1+\sqrt{5}\over 2}$ &$-{1+\sqrt{5}\over 2}$ &${1+\sqrt{5}\over 2}$ &${1-\sqrt{5}\over 2}$ &${1-\sqrt{5}\over 2}$ &${-1+\sqrt{5}\over 2}$ &$-{1+\sqrt{5}\over 2}$ &$-{1+\sqrt{5}\over 2}$ &$1$ &$1$ &$-1$ \\
\hline
\rule{0pt}{2.5ex}\rule[-1.2ex]{0pt}{0pt}$6$&$45$&${-1+\sqrt{5}\over 2}$ &${-1+\sqrt{5}\over 2}$ &${-1+\sqrt{5}\over 2}$ &${1-\sqrt{5}\over 2}$ &${1+\sqrt{5}\over 2}$ &${1-\sqrt{5}\over 2}$ &${1-\sqrt{5}\over 2}$ &$-{1+\sqrt{5}\over 2}$ &$-1$ &${-1+\sqrt{5}\over 2}$ &${-1+\sqrt{5}\over 2}$ \\
\hline
\rule{0pt}{2.5ex}\rule[-1.2ex]{0pt}{0pt}$7$&$45$&${1+\sqrt{5}\over 2}$ &${1+\sqrt{5}\over 2}$ &${1+\sqrt{5}\over 2}$ &${1+\sqrt{5}\over 2}$ &${-1+\sqrt{5}\over 2}$ &${1+\sqrt{5}\over 2}$ &$1$ &${1+\sqrt{5}\over 2}$ &$1$ &$2$ &$1$ \\
\hline
\rule{0pt}{2.5ex}\rule[-1.2ex]{0pt}{0pt}$8$&$48$&$\sqrt{2}$ &$0$ &$0$ &$0$ &$\sqrt{2}$ &$\sqrt{2}$ &$-1$ &$\sqrt{2}$ &$0$ &$0$ &$1$ \\
\hline
\rule{0pt}{2.5ex}\rule[-1.2ex]{0pt}{0pt}$9$&$72$&$0$ &$0$ &$-1$ &$0$ &$0$ &$\sqrt{2}$ &$-\sqrt{2}$ &$1$ &$-1$ &$0$ &$0$ \\
\hline
\rule{0pt}{2.5ex}\rule[-1.2ex]{0pt}{0pt}$10$&$72$&$-\sqrt{2}$ &$0$ &$0$ &$-1$ &$-\sqrt{2}$ &$0$ &$-1$ &$-1$ &$\sqrt{2}$ &$-\sqrt{2}$ &$0$ \\
\hline
\rule{0pt}{2.5ex}\rule[-1.2ex]{0pt}{0pt}$11$&$81$&${-1+\sqrt{5}\over 2}$ &${1-\sqrt{5}\over 2}$ &$-1$ &${-1+\sqrt{5}\over 2}$ &${1+\sqrt{5}\over 2}$ &${-1+\sqrt{5}\over 2}$ &$1$ &$-1$ &${1-\sqrt{5}\over 2}$ &$-1$ &$0$ \\
\hline
\rule{0pt}{2.5ex}\rule[-1.2ex]{0pt}{0pt}$12$&$81$&${1+\sqrt{5}\over 2}$ &${1+\sqrt{5}\over 2}$ &$-1$ &$-{1+\sqrt{5}\over 2}$ &${-1+\sqrt{5}\over 2}$ &${1+\sqrt{5}\over 2}$ &${1-\sqrt{5}\over 2}$ &$-{1+\sqrt{5}\over 2}$ &$-1$ &$-1$ &$1$ \\
\hline
\rule{0pt}{2.5ex}\rule[-1.2ex]{0pt}{0pt}$13$&$96$&$\sqrt{2}$ &$0$ &$0$ &$0$ &$0$ &$1$ &$-\sqrt{2}$ &$\sqrt{2}$ &$1$ &$-2$ &$\sqrt{2}$ \\
\hline
\rule{0pt}{2.5ex}\rule[-1.2ex]{0pt}{0pt}$14$&$96$&${1-\sqrt{5}\over 2}$ &${1-\sqrt{5}\over 2}$ &${1-\sqrt{5}\over 2}$ &${1-\sqrt{5}\over 2}$ &${1-\sqrt{5}\over 2}$ &${1-\sqrt{5}\over 2}$ &$-{1+\sqrt{5}\over 2}$ &${1-\sqrt{5}\over 2}$ &${1-\sqrt{5}\over 2}$ &${1-\sqrt{5}\over 2}$ &${1-\sqrt{5}\over 2}$ \\
\hline
\rule{0pt}{2.5ex}\rule[-1.2ex]{0pt}{0pt}$15$&$96$&$-{1+\sqrt{5}\over 2}$ &$-{1+\sqrt{5}\over 2}$ &${1+\sqrt{5}\over 2}$ &$-{1+\sqrt{5}\over 2}$ &$-{1+\sqrt{5}\over 2}$ &${1+\sqrt{5}\over 2}$ &$-{1+\sqrt{5}\over 2}$ &$-{1+\sqrt{5}\over 2}$ &$2$ &$1$ &$-1$ \\
\hline
\rule{0pt}{2.5ex}\rule[-1.2ex]{0pt}{0pt}$16$&$96$&$0$ &$0$ &$1$ &$0$ &$-1$ &$2$ &$0$ &$0$ &$-\sqrt{2}$ &$\sqrt{2}$ &$-1$ \\
\hline
\rule{0pt}{2.5ex}\rule[-1.2ex]{0pt}{0pt}$17$&$105$&$-{1+\sqrt{5}\over 2}$ &$1$ &${1+\sqrt{5}\over 2}$ &$-1$ &${-1+\sqrt{5}\over 2}$ &$-{1+\sqrt{5}\over 2}$ &$-1$ &${1+\sqrt{5}\over 2}$ &${1+\sqrt{5}\over 2}$ &${-1+\sqrt{5}\over 2}$ &$-1$ \\
\hline
\rule{0pt}{2.5ex}\rule[-1.2ex]{0pt}{0pt}$18$&$105$&$1$ &${1-\sqrt{5}\over 2}$ &${1-\sqrt{5}\over 2}$ &$-1$ &${1+\sqrt{5}\over 2}$ &${1-\sqrt{5}\over 2}$ &${1-\sqrt{5}\over 2}$ &$-{1+\sqrt{5}\over 2}$ &$-1$ &$0$ &$0$ \\
\hline
\rule{0pt}{2.5ex}\rule[-1.2ex]{0pt}{0pt}$19$&$108$&${1+\sqrt{5}\over 2}$ &$1$ &$-{1+\sqrt{5}\over 2}$ &$-{1+\sqrt{5}\over 2}$ &${1+\sqrt{5}\over 2}$ &${1+\sqrt{5}\over 2}$ &${1-\sqrt{5}\over 2}$ &$-{1+\sqrt{5}\over 2}$ &$-2$ &$0$ &$2$ \\
\hline
\rule{0pt}{2.5ex}\rule[-1.2ex]{0pt}{0pt}$20$&$108$&${-1+\sqrt{5}\over 2}$ &${1-\sqrt{5}\over 2}$ &${-1+\sqrt{5}\over 2}$ &$1$ &${1-\sqrt{5}\over 2}$ &$-1$ &${1-\sqrt{5}\over 2}$ &$-2$ &${-1+\sqrt{5}\over 2}$ &${1-\sqrt{5}\over 2}$ &$0$ \\
\hline
\rule{0pt}{2.5ex}\rule[-1.2ex]{0pt}{0pt}$21$&$120$&$1$ &$0$ &$-1$ &$0$ &$-1$ &$0$ &$-1$ &$\sqrt{2}$ &$-\sqrt{2}$ &$-1$ &$0$ \\
\hline
\rule{0pt}{2.5ex}\rule[-1.2ex]{0pt}{0pt}$22$&$144$&${-1+\sqrt{5}\over 2}$ &$1$ &${1-\sqrt{5}\over 2}$ &${1-\sqrt{5}\over 2}$ &$-1$ &${-1+\sqrt{5}\over 2}$ &$-1$ &${1-\sqrt{5}\over 2}$ &$-1$ &${1-\sqrt{5}\over 2}$ &$-{1+\sqrt{5}\over 2}$ \\
\hline
\rule{0pt}{2.5ex}\rule[-1.2ex]{0pt}{0pt}$23$&$144$&$-1$ &$-{1+\sqrt{5}\over 2}$ &$-1$ &$-{1+\sqrt{5}\over 2}$ &${1+\sqrt{5}\over 2}$ &$1$ &${1+\sqrt{5}\over 2}$ &$1$ &$1$ &${-1+\sqrt{5}\over 2}$ &$1$ \\
\hline
\rule{0pt}{2.5ex}\rule[-1.2ex]{0pt}{0pt}$24$&$144$&$0$ &$1$ &$0$ &$0$ &$\sqrt{2}$ &$0$ &$2$ &$0$ &$1$ &$-\sqrt{2}$ &$-1$ \\
\hline
\rule{0pt}{2.5ex}\rule[-1.2ex]{0pt}{0pt}$25$&$192$&$2$ &$2$ &$-2$ &$-2$ &$-2$ &${1-\sqrt{5}\over 2}$ &$-1$ &${-1+\sqrt{5}\over 2}$ &$-{1+\sqrt{5}\over 2}$ &$-{1+\sqrt{5}\over 2}$ &$1$ \\
\hline
\rule{0pt}{2.5ex}\rule[-1.2ex]{0pt}{0pt}$26$&$192$&$0$ &$0$ &$0$ &$0$ &$0$ &$-\sqrt{2}$ &$-2$ &$-\sqrt{2}$ &$-1$ &$-\sqrt{2}$ &$-1$ \\
\hline
\rule{0pt}{2.5ex}\rule[-1.2ex]{0pt}{0pt}$27$&$200$&$0$ &$0$ &${1-\sqrt{5}\over 2}$ &${1+\sqrt{5}\over 2}$ &${1-\sqrt{5}\over 2}$ &${-1+\sqrt{5}\over 2}$ &$1$ &$1$ &${1-\sqrt{5}\over 2}$ &${1-\sqrt{5}\over 2}$ &${-1+\sqrt{5}\over 2}$ \\
\hline
\rule{0pt}{2.5ex}\rule[-1.2ex]{0pt}{0pt}$28$&$200$&${1+\sqrt{5}\over 2}$ &$0$ &$0$ &${-1+\sqrt{5}\over 2}$ &${1+\sqrt{5}\over 2}$ &${1-\sqrt{5}\over 2}$ &$0$ &${1+\sqrt{5}\over 2}$ &$1$ &${1+\sqrt{5}\over 2}$ &$-{1+\sqrt{5}\over 2}$ \\
\hline
\rule{0pt}{2.5ex}\rule[-1.2ex]{0pt}{0pt}$29$&$205$&$-1$ &$1$ &$1$ &${1+\sqrt{5}\over 2}$ &${1-\sqrt{5}\over 2}$ &$0$ &$-1$ &${1+\sqrt{5}\over 2}$ &$-{1+\sqrt{5}\over 2}$ &${1+\sqrt{5}\over 2}$ &$0$ \\
\hline
\rule{0pt}{2.5ex}\rule[-1.2ex]{0pt}{0pt}$30$&$216$&$-1$ &$0$ &$0$ &$0$ &$0$ &$0$ &$\sqrt{2}$ &$1$ &$-\sqrt{2}$ &$0$ &$1$ \\
\hline
\rule{0pt}{2.5ex}\rule[-1.2ex]{0pt}{0pt}$31$&$220$&$-1$ &$1$ &${-1+\sqrt{5}\over 2}$ &$-1$ &${-1+\sqrt{5}\over 2}$ &$-1$ &$-{1+\sqrt{5}\over 2}$ &${-1+\sqrt{5}\over 2}$ &${1-\sqrt{5}\over 2}$ &$0$ &$-{1+\sqrt{5}\over 2}$ \\
\hline
\rule{0pt}{2.5ex}\rule[-1.2ex]{0pt}{0pt}$32$&$220$&$-{1+\sqrt{5}\over 2}$ &$-1$ &$-1$ &$-{1+\sqrt{5}\over 2}$ &$1$ &${1+\sqrt{5}\over 2}$ &${-1+\sqrt{5}\over 2}$ &${1+\sqrt{5}\over 2}$ &${1+\sqrt{5}\over 2}$ &${-1+\sqrt{5}\over 2}$ &${-1+\sqrt{5}\over 2}$ \\
\hline
\rule{0pt}{2.5ex}\rule[-1.2ex]{0pt}{0pt}$33$&$240$&$1$ &$-1$ &${1-\sqrt{5}\over 2}$ &$0$ &${-1+\sqrt{5}\over 2}$ &$0$ &$-{1+\sqrt{5}\over 2}$ &${-1+\sqrt{5}\over 2}$ &$1$ &${-1+\sqrt{5}\over 2}$ &${1+\sqrt{5}\over 2}$ \\
\hline
\rule{0pt}{2.5ex}\rule[-1.2ex]{0pt}{0pt}$34$&$240$&${1-\sqrt{5}\over 2}$ &$0$ &${1-\sqrt{5}\over 2}$ &$0$ &${1-\sqrt{5}\over 2}$ &$0$ &$1$ &$0$ &$-{1+\sqrt{5}\over 2}$ &${1-\sqrt{5}\over 2}$ &$0$ \\
\hline
\rule{0pt}{2.5ex}\rule[-1.2ex]{0pt}{0pt}$35$&$240$&$1$ &$-1$ &$-{1+\sqrt{5}\over 2}$ &${1+\sqrt{5}\over 2}$ &$0$ &$-1$ &${1-\sqrt{5}\over 2}$ &$0$ &${-1+\sqrt{5}\over 2}$ &$-{1+\sqrt{5}\over 2}$ &$-{1+\sqrt{5}\over 2}$ \\
\hline
\rule{0pt}{2.5ex}\rule[-1.2ex]{0pt}{0pt}$36$&$240$&$0$ &${1+\sqrt{5}\over 2}$ &$-{1+\sqrt{5}\over 2}$ &$0$ &$-{1+\sqrt{5}\over 2}$ &$1$ &$0$ &$-1$ &${1+\sqrt{5}\over 2}$ &$-1$ &$0$ \\
\hline
\rule{0pt}{2.5ex}\rule[-1.2ex]{0pt}{0pt}$37$&$300$&${1+\sqrt{5}\over 2}$ &$1$ &$1$ &$1$ &$1$ &$1$ &$0$ &$1$ &$1$ &${1+\sqrt{5}\over 2}$ &$1$ \\
\hline
\rule{0pt}{2.5ex}\rule[-1.2ex]{0pt}{0pt}$38$&$300$&$1$ &${-1+\sqrt{5}\over 2}$ &$1$ &$1$ &$-1$ &$-1$ &${1-\sqrt{5}\over 2}$ &$-1$ &$0$ &$0$ &${1-\sqrt{5}\over 2}$ \\
\hline
\rule{0pt}{2.5ex}\rule[-1.2ex]{0pt}{0pt}$39$&$360$&$0$ &${-1+\sqrt{5}\over 2}$ &$0$ &$-1$ &${-1+\sqrt{5}\over 2}$ &$-1$ &$-1$ &$-1$ &$1$ &$1$ &$0$ \\
\hline
\rule{0pt}{2.5ex}\rule[-1.2ex]{0pt}{0pt}$40$&$360$&${1-\sqrt{5}\over 2}$ &$0$ &$0$ &$-{1+\sqrt{5}\over 2}$ &$1$ &${1+\sqrt{5}\over 2}$ &$-{1+\sqrt{5}\over 2}$ &$2$ &$1$ &$0$ &$0$ \\
\hline
\rule{0pt}{2.5ex}\rule[-1.2ex]{0pt}{0pt}$41$&$360$&$1$ &$0$ &$-{1+\sqrt{5}\over 2}$ &$0$ &$-{1+\sqrt{5}\over 2}$ &$-1$ &$0$ &$0$ &$0$ &${1+\sqrt{5}\over 2}$ &${-1+\sqrt{5}\over 2}$ \\
\hline
\rule{0pt}{2.5ex}\rule[-1.2ex]{0pt}{0pt}$42$&$480$&$1$ &$-1$ &$1$ &$1$ &$-1$ &$-1$ &$0$ &$0$ &${1-\sqrt{5}\over 2}$ &$-1$ &$1$ \\
\hline
\end{tabular}}
}
\end{center}
\end{table}
\begin{table}[!h]
\begin{center} 
{\resizebox{\textwidth}{!}{
\begin{tabular}{| c | c || c | c | c | c || c || c | c | c | c | c | c |}
\hline
\rule{0pt}{2.5ex}\rule[-1.2ex]{0pt}{0pt}$43$&$480$&$0$ &$0$ &$0$ &${-1+\sqrt{5}\over 2}$ &${-1+\sqrt{5}\over 2}$ &$-{1+\sqrt{5}\over 2}$ &$0$ &$1$ &$0$ &$1$ &$-1$ \\
\hline
\rule{0pt}{2.5ex}\rule[-1.2ex]{0pt}{0pt}$44$&$480$&$0$ &$0$ &${1+\sqrt{5}\over 2}$ &$0$ &${1+\sqrt{5}\over 2}$ &$0$ &$1$ &$0$ &${-1+\sqrt{5}\over 2}$ &$-{1+\sqrt{5}\over 2}$ &${1-\sqrt{5}\over 2}$ \\
\hline
\rule{0pt}{2.5ex}\rule[-1.2ex]{0pt}{0pt}$45$&$580$&${1-\sqrt{5}\over 2}$ &$0$ &$0$ &$0$ &${1+\sqrt{5}\over 2}$ &$0$ &${1+\sqrt{5}\over 2}$ &$-1$ &$0$ &$-2$ &$-1$ \\
\hline
\rule{0pt}{2.5ex}\rule[-1.2ex]{0pt}{0pt}$46$&$600$&$0$ &$-1$ &$0$ &${1-\sqrt{5}\over 2}$ &$-1$ &$0$ &${1-\sqrt{5}\over 2}$ &$1$ &$-{1+\sqrt{5}\over 2}$ &$0$ &$-1$ \\
\hline
\rule{0pt}{2.5ex}\rule[-1.2ex]{0pt}{0pt}$47$&$600$&$-{1+\sqrt{5}\over 2}$ &$1$ &$0$ &$0$ &$1$ &$-1$ &${-1+\sqrt{5}\over 2}$ &$-1$ &${-1+\sqrt{5}\over 2}$ &$-1$ &$-2$ \\
\hline
\rule{0pt}{2.5ex}\rule[-1.2ex]{0pt}{0pt}$48$&$900$&$0$ &$0$ &$0$ &$-1$ &${-1+\sqrt{5}\over 2}$ &$0$ &${1+\sqrt{5}\over 2}$ &${-1+\sqrt{5}\over 2}$ &${1-\sqrt{5}\over 2}$ &$-{1+\sqrt{5}\over 2}$ &$1$ \\
\hline
\rule{0pt}{2.5ex}\rule[-1.2ex]{0pt}{0pt}$49$&$900$&$0$ &$0$ &$0$ &$-1$ &$-{1+\sqrt{5}\over 2}$ &$0$ &$1$ &${-1+\sqrt{5}\over 2}$ &$-{1+\sqrt{5}\over 2}$ &${-1+\sqrt{5}\over 2}$ &${-1+\sqrt{5}\over 2}$ \\
\hline
\rule{0pt}{2.5ex}\rule[-1.2ex]{0pt}{0pt}$50$&$1200$&$0$ &$0$ &${1-\sqrt{5}\over 2}$ &$0$ &$0$ &${-1+\sqrt{5}\over 2}$ &$1$ &$1$ &$-1$ &$-1$ &$1$ \\
\hline
\rule{0pt}{2.5ex}\rule[-1.2ex]{0pt}{0pt}$51$&$1200$&$0$ &${1+\sqrt{5}\over 2}$ &$0$ &$0$ &$0$ &${1-\sqrt{5}\over 2}$ &$1$ &${-1+\sqrt{5}\over 2}$ &$1$ &$0$ &$1$ \\
\hline
\rule{0pt}{2.5ex}\rule[-1.2ex]{0pt}{0pt}$52$&$2160$&$1$ &$0$ &$0$ &$0$ &$-1$ &$0$ &$0$ &$2$ &$-1$ &$-{1+\sqrt{5}\over 2}$ &${1+\sqrt{5}\over 2}$ \\
\hline
\rule{0pt}{2.5ex}\rule[-1.2ex]{0pt}{0pt}$53$&$2160$&$0$ &$0$ &$0$ &$-1$ &$0$ &${-1+\sqrt{5}\over 2}$ &${-1+\sqrt{5}\over 2}$ &$-2$ &$0$ &$1$ &$1$ \\
\hline
\rule{0pt}{2.5ex}\rule[-1.2ex]{0pt}{0pt}$54$&$3072$&$0$ &$0$ &$0$ &$0$ &$0$ &$-{1+\sqrt{5}\over 2}$ &$0$ &$-1$ &$-{1+\sqrt{5}\over 2}$ &${1-\sqrt{5}\over 2}$ &$1$ \\
\hline
\end{tabular}}
}
\end{center}
\end{table}

\subsection{Finite monodromy groups}

Here we show that solution $25$ in Table \ref{tb:classification} corresponds to an infinite monodromy group and there is no symmetry mapping it to an orbit with finite monodromy group. We also calculate the order of the monodromy groups generated by all other orbits. The results about monodromy group orders are resumed in Table \ref{tb:classification1}.

\begin{table}
\begin{center} 
\caption {}\label{tb:classification1}
\begin{tabular}{| c   | c   |}
\hline
Orbits & Group order\\
\hline
1,3, 8, 9, 10, 13, 16, 21, 24, 26, 30  & 24\\
\hline
2 &12 \\
\hline
25& infinite\\
\hline
all others  & 60\\
\hline
\end{tabular}
\end{center}
\end{table}

To prove these statements, we calculate the monodromy matrices by using the parameterisation formulae in Section \ref{sec:coords} with the corresponding values of $p_i$, $i=1,\dots,4,\infty$ and $p_{ij}$, $i,j=1,\dots,4$ from Table \ref{tb:classification}. Since none of our groups are cyclic and they are subgroups of  $SL_2(\mathbb C)$, by Klein classification result only binary polyhedral and binary dihedral group are allowed. The order of the binary polyhedral groups is bounded by $120$, therefore we wrote a $C^{+}$ program that generates group elements up to $121$ distinct elements. In this way we characterise the orders of the monodromy groups associated to all orbits except the $25$-th one. Since for orbit $25$ all generating matrices $M_1,\dots,M_4$ are not diagonalisable and therefore not idempotent, the group is automatically infinite. This property is clearly preserved by the action of the symmetry group $G$ defined in Theorem \ref{thm:symm-gr}. For competeness we list here the monodromy matrices associated to the $25$-th orbit (in the basis of $M_3 M_2$ diagonal):

{\small{$$
M_1 = 
\left(
\begin{array}{cc}
 1-i \sqrt{\frac{2}{5+\sqrt{5}}} & \frac{1}{10} \left(5-\sqrt{5}\right) \\
 1 & 1+i \sqrt{\frac{2}{5+\sqrt{5}}} \\
\end{array}
\right)
$$
$$
M_2 = 
\left(
\begin{array}{cc}
 1-\frac{i \left(3+\sqrt{5}\right)}{\sqrt{2 \left(5+\sqrt{5}\right)}} & \frac{2+2
   \sqrt{5}-i \sqrt{2 \left(5+\sqrt{5}\right)}+i \sqrt{10 \left(5+\sqrt{5}\right)}}{4
   \left(\sqrt{5}-5\right)} \\
 \frac{ i \left(4 i \left(2+\sqrt{5}\right)+\sqrt{2
   \left(5+\sqrt{5}\right)}+\sqrt{10 \left(5+\sqrt{5}\right)}\right) }{8}& 1+\frac{i
   \left(3+\sqrt{5}\right)}{\sqrt{2 \left(5+\sqrt{5}\right)}} \\
\end{array}
\right)
$$
$$
M_3 = 
\left(
\begin{array}{cc}
 \frac{i \left(3+\sqrt{5}+i \sqrt{2 \left(5+\sqrt{5}\right)}\right)}{\sqrt{2
   \left(5+\sqrt{5}\right)}} & \frac{-1+\sqrt{5}-i \sqrt{2 \left(5+\sqrt{5}\right)}}{2
   \left(\sqrt{5}-5\right)} \\
 -\frac{ i \left(-2 i \left(1+\sqrt{5}\right)+3 \sqrt{2
   \left(5+\sqrt{5}\right)}+\sqrt{10 \left(5+\sqrt{5}\right)}\right) }{8} & -1-\frac{i
   \left(3+\sqrt{5}\right)}{\sqrt{2 \left(5+\sqrt{5}\right)}} \\
\end{array}
\right)
$$
$$
M_4 = 
\left(
\begin{array}{cc}
 -1-\frac{i \left(\sqrt{5}-3\right)}{\sqrt{2 \left(5+\sqrt{5}\right)}} &
   \frac{-1+\sqrt{5}+2 i \sqrt{2 \left(5+\sqrt{5}\right)}-i \sqrt{10
   \left(5+\sqrt{5}\right)}}{2 \left(5+\sqrt{5}\right)} \\
 \frac{ \left(-1+\sqrt{5}-2 i \sqrt{2 \left(5+\sqrt{5}\right)}+i \sqrt{10
   \left(5+\sqrt{5}\right)}\right)}{4} & \frac{i \left(-3+\sqrt{5}+i \sqrt{2
   \left(5+\sqrt{5}\right)}\right)}{\sqrt{2 \left(5+\sqrt{5}\right)}} \\
\end{array}
\right)
$$}}

\section{Outlook}
From the parameterization results of Section \ref{sec:coords}, it is clear that we could reconstruct all monodromy matrices (up to global conjugation) by matching only two points, and therefore completely reconstruct the candidate point in that way. This means that we could in fact classify all finite orbits up to two projections to Picard or Kitaev orbits. This computation is theoretically possible but extremely technical and would require covering so many sub-cases that we felt it is best to postpone it to further publications. 

Another direction of research is to classify all finite orbits of the pure braid group $P_n$ on the moduli space of $SL_2(\mathbb C)$ monodromy representations over the $n+1$-punctured Riemann sphere for $n>4$, or in other words all algebraic solutions of the Garnier system $\mathcal G_{n-2}$. We expect the matching procedure to work in this case too: now there will be $\left(\begin{array}{c}n\\3\\\end{array}\right)$ restrictions to PVI, so many more necessary conditions to be satisfied in order to produce a candidate point. We have seen that for $n=4$, we start from an extended list of $86, 768$ to produce only $54$ orbits. As $n$ increases, the starting list is the same, but the number of necessary conditions increases - therefore we expect that there will be less and less exceptional orbits as $n$ increases.






\vskip 4mm
 \noindent{\bf Acknowledgments.}
The authors are grateful to P.~A. Clarkson, G. Cousin, D. Guzzetti, O. Lisovyy, A. Nakamura and V. Rubtsov for helpful conversations. The research by PC was funded by the EPSRC DTA allocation to the Mathematical Sciences Department at Loughborough University.

\begin{appendix}\label{sec:groupG}
\section{Appendix: The symmetry group $G$ of  \hmg2}

The general theory of the bi-rational transformations of the Garnier systems was developed in \cite{kimura1990}, where Kimura proved that the symmetric group $S_5$ acts as a group of  bi-rational transformations on the Garnier system (see also 
\cite{iwasaki1991gauss}, \cite{Tsuda2006657} and \cite{JLM:253431}). These bi-rational transformations map algebraic solutions to algebraic solutions with the same number of branches. This means that the corresponding action on the co-adjoint coordinates maps finite orbits to finite orbits with the same number of points. To compute this action, we use the following result proved in 
 \cite{MazzoccoCanonical}:

\begin{lemma}
The symmetric group $S_5$ giving rise to Kimura's bi-rational transformations of the Garnier system acts on $\mathcal{M}_{\mathcal{G}_2}$ as the group $<P_{1 3},P_{2 3},$ $P_{3 4},P_{1 \infty}>$ where
the transformations $P_{1 3}$,$P_{2 3} $,$P_{3 4} $ act on the monodromy matrices as follows:
\begin{align}
P_{1 3} : (M_1,M_2,M_3,M_4) \mapsto (&M_{1}^{-1}M_2^{-1} M_3 M_2 M_1,M_2,M_2 M_1 M_2^{-1},M_4),\nonumber\\
P_{2 3} : (M_1,M_2,M_3,M_4) \mapsto (&(M_2^{-1} M_3 M_2 M_1)^{-1} M_1 M_2^{-1} M_3 M_2 M_1 ,\nonumber\\&(M_2 M_1)^{-1}M_3 M_2 M_1,M_2,M_4) ,\nonumber\\
P_{3 4} : (M_1,M_2,M_3,M_4) \mapsto (&M_{\infty}^{} M_{3}^{} M_{2}^{} M_{1}^{} (M_{\infty}^{} M_{3}^{} M_{2}^{})^{-1},M_{2}^{},\nonumber\\&(M_{3}^{} M_{2}^{} M_{1}^{}M_{2}^{-1})^{-1}M_{4}^{}(M_{3}^{} M_{2}^{} M_{1}^{}M_{2}^{-1}),M_{3}^{}),
\label{eq:trsMa}\end{align}
while transformation $P_{1 \infty}$ acts on the monodromy matrices as:
\begin{align}
P_{1 \infty} : (M_1,M_2,M_3,M_4) \mapsto &( - C_1 M_\infty C_1^{-1},C_1^{-1} M_2 C_1,C_1^{-1} M_3 C_1,\nonumber\\& C_1^{-1}  M_4 C_1),
\label{eq:trsMb}\end{align}
where $C_1$ is the diagonalizing matrix of $M_{1}$.
\end{lemma}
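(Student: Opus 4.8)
The plan is to deduce the statement from Kimura's classification \cite{kimura1990} of the bi-rational symmetries of the Garnier system, together with the explicit analysis of the canonical (Hamiltonian) structure of the Schlesinger equations carried out in \cite{MazzoccoCanonical}. Kimura's group is the symmetric group $S_5$ permuting the five singular points $a_1,a_2,a_3,a_4,a_\infty=\infty$ of the Fuchsian system \eqref{N1in}; since the four transpositions $(a_1a_3)$, $(a_2a_3)$, $(a_3a_4)$, $(a_1a_\infty)$ already generate $S_5$, it is enough to identify the transformation of the monodromy data induced by each of these four transpositions and to check that they satisfy the transposition relations of $S_5$. Each such transformation is realised by an isomonodromic deformation along an explicit path in the configuration space of the poles, possibly composed with a fractional-linear change of the independent variable; consequently it maps algebraic solutions of $\mathcal{G}_2$ to algebraic solutions with the same number of branches, hence finite $P_4$-orbits on $\hhmg2$ to finite $P_4$-orbits of the same length, which is the property used from Section \ref{sec:method} onwards.

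First I would treat the three transpositions of finite poles. Exchanging $a_i$ with $a_j$ is implemented by the half-twist of the two poles (whose square is one of the pure braid generators \eqref{mypuregens}) followed by the relabelling of loops needed to restore the distinguished basis of $\pi_1(\Sigma_5)$; carrying this out produces the formulas \eqref{eq:trsMa}. To verify them one needs only three elementary matrix computations: (i) the cyclic relation $M_\infty M_4M_3M_2M_1=\mathbb{I}$ is preserved, so that $M_\infty$ is unchanged and the image still lies in $\hhmg2$; (ii) each new matrix $M_i'$ is conjugate to $M_{\tau(i)}$, where $\tau$ is the transposition in question, so the eigenvalue data $\theta_1,\dots,\theta_4,\theta_\infty$ are merely permuted and diagonalizability of $M_\infty$ is preserved; (iii) the $P_{ij}$ have order two and satisfy the commutation relations of the generating transpositions of $S_5$. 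For $P_{13}$, for instance, $P_{13}(M_4)P_{13}(M_3)P_{13}(M_2)P_{13}(M_1)=M_4(M_2M_1M_2^{-1})M_2(M_1^{-1}M_2^{-1}M_3M_2M_1)=M_4M_3M_2M_1$, while $P_{13}(M_1)$ is conjugate to $M_3$, $P_{13}(M_3)$ is conjugate to $M_1$, and $M_2,M_4$ are fixed; the checks for $P_{23}$ and $P_{34}$ are of exactly the same nature, and the $S_5$-relations among them reduce to the braid relations already verified for the $\sigma_i$ in the proof of Lemma \ref{lm:u2}.

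Next I would treat $P_{1\infty}$, the transposition exchanging $a_1$ with $\infty$; this is the genuinely different case, because the normalisation $\mathcal{A}_\infty=\tfrac{1}{2}\mathrm{diag}(\theta_\infty,-\theta_\infty)$ distinguishes the point at infinity. One applies the fractional-linear substitution in $\lambda$ that exchanges $a_1$ and $\infty$: the transformed equation is again Fuchsian with the same monodromy up to relabelling, but its residue at $\infty$ is a conjugate of $\mathcal{A}_1$, so one must further conjugate by the diagonalising matrix $C_1$ of $\mathcal{A}_1$ (equivalently of $M_1$) to restore the normal form. This conjugation replaces $M_2,M_3,M_4$ by $C_1^{-1}M_iC_1$ and turns the old monodromy at $\infty$ into the new monodromy at $a_1$; the minus sign in $M_1\mapsto -C_1M_\infty C_1^{-1}$ records the half-integer shift of the local exponents inherent in such bi-rational transformations (it sends $\theta_1$ to $\theta_\infty\pm1$, the same type of shift appearing in Lemma \ref{lm:3}) and must be tracked through the branch of the relevant square-root factor. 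Together with the finite-pole transpositions, $P_{1\infty}$ then generates a group isomorphic to $S_5$, and the compatibility of all four maps with the canonical structure, which is what guarantees that they are precisely Kimura's bi-rational transformations and not merely some symmetries of the monodromy manifold, is the content of the computation of \cite{MazzoccoCanonical} that we quote.

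The main obstacle is the $P_{1\infty}$ step: keeping track of the fractional-linear change of variable, the branch choices in the re-diagonalisation at infinity, and the resulting sign, so as to land on exactly the formula \eqref{eq:trsMb} with the correctly normalised new residue at $\infty$. By contrast, once \eqref{eq:trsMa}--\eqref{eq:trsMb} are established, passing to the action on the co-adjoint coordinates $p$ stated in Theorem \ref{thm:symm-gr} is routine: one substitutes into the trace functions \eqref{eq:pidef}, simplifies using the skein relation \eqref{eq:rule1} and the relations $f_1,\dots,f_{15}$, and checks invariance of the ideal $I$, exactly as in the proof of Lemma \ref{lm:u2}.
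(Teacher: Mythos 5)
The paper does not actually prove this lemma: in the Appendix it is quoted as a result established in \cite{MazzoccoCanonical}, and the only things proved there about these maps are the corollary computing their action on the co-adjoint coordinates and the fact that, together with the sign flips and permutations, they commute appropriately with the braid generators and hence preserve orbit lengths. Since your argument also defers the precise identification of the monodromy action of Kimura's transformations (in particular the normalisation at infinity and the sign in $P_{1\infty}$) to \cite{MazzoccoCanonical}, your proposal is essentially the same citation-based route, supplemented by consistency checks; your computation that $P_{13}$ preserves $M_4M_3M_2M_1$ and swaps the conjugacy classes of $M_1$ and $M_3$ while fixing $M_2,M_4$ is correct, and indeed $P_{13}$ coincides with the braid word $\sigma_2\sigma_1^{-1}\sigma_2^{-1}$ built from \eqref{eq:braidaction}.

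However, one of your announced checks would fail if carried out literally: the maps \eqref{eq:trsMa} are not involutions and do not satisfy the $S_5$ transposition relations on the nose. For instance, from $P_{13}=\sigma_2\sigma_1^{-1}\sigma_2^{-1}$ one gets $P_{13}^2=\sigma_2\sigma_1^{-2}\sigma_2^{-1}$, a nontrivial element of the pure braid group $P_4$, which acts nontrivially on $\widehat{\mathcal{M}}_{\mathcal{G}_2}$ at generic points (if it acted trivially, then $\beta_{21}$ would act trivially and there would be no nontrivial $P_4$-dynamics to classify). The $S_5$ relations hold only modulo the pure braid group, i.e.\ up to the path-lifting ambiguity inherent in realising Kimura's bi-rational maps on monodromy data, so insisting on verifying them exactly would wrongly suggest that \eqref{eq:trsMa}--\eqref{eq:trsMb} are inconsistent. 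Neither the lemma's use in the paper nor its source needs those relations: what is used is only that each generator is a symmetry in the sense of Definition \ref{df:symmetry}, which follows from the commutation relations with the $\sigma_i$ proved in the Appendix. Likewise, your heuristic for the minus sign in $P_{1\infty}$ (a shift of $\theta_1$ to $\theta_\infty\pm1$) is a plausibility argument, not a derivation; that sign and the conjugation by $C_1$ are precisely the part of the computation that must be taken from \cite{MazzoccoCanonical}, exactly as the paper does.
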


\begin{corollary} The group $\langle P_{1 3},P_{2 3},P_{3 4},P_{1 \infty}\rangle$ acts on the co-adjoint coordinates as in \eqref{eq:P13}, \eqref{eq:P23},\eqref{eq:P34}, \eqref{eq:P1infty}.
\end{corollary}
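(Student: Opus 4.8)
The plan is to obtain each of the four coordinate formulas from the matrix-level action given in the preceding Lemma, the translation being carried out with the trace identity \eqref{eq:rule1} and the cyclic relation \eqref{eq:cyclic} alone.

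For the three generators $P_{13},P_{23},P_{34}$ the quickest route is to observe that the matrix maps written in the Lemma coincide, word for word, with the matrix-level actions of the braid words $\sigma_2\sigma_1^{-1}\sigma_2^{-1}$ and of the longer words appearing in \eqref{eq:P23}, \eqref{eq:P34}, where the $\sigma_i$ act on quadruples via \eqref{eq:braidaction} and $\sigma_i^{-1}$ via the inverse of \eqref{eq:braidaction} (explicitly $\sigma_1^{-1}:(M_1,M_2,M_3,M_4)\mapsto(M_1^{-1}M_2M_1,M_1,M_3,M_4)$ and $\sigma_2^{-1}:(M_1,M_2,M_3,M_4)\mapsto(M_1,M_2^{-1}M_3M_2,M_2,M_4)$). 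One checks, for $P_{13}$, that applying $\sigma_2^{-1}$, then $\sigma_1^{-1}$, then $\sigma_2$ sends $(M_1,M_2,M_3,M_4)$ to $(M_1^{-1}M_2^{-1}M_3M_2M_1,M_2,M_2M_1M_2^{-1},M_4)$, which is exactly the image of $P_{13}$ in the Lemma; the checks for $P_{23}$ and $P_{34}$ are the same manipulation with longer words. Since the co-adjoint coordinates are by definition traces of words in the $M_i$, this matrix-level identification forces the coordinate action to be the corresponding composite of the polynomial maps \eqref{eq:braidpaction} and their inverses, which is precisely the content of \eqref{eq:P13}, \eqref{eq:P23}, \eqref{eq:P34}.

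For $P_{1\infty}$ there is no braid shortcut, so \eqref{eq:P1infty} must be obtained directly. Working on the Zariski-dense locus $p_1\ne\pm2$ where $M_1$ is diagonalizable, the Lemma exhibits the new quadruple as $M_2,M_3,M_4$ together with $-M_\infty$ in the first slot, all simultaneously conjugated by the diagonalizer $C_1$ of $M_1$ (inserted so that the new $M_\infty$ comes out diagonal). Since simultaneous conjugation leaves the trace of a word unchanged, each of the fifteen quantities \eqref{eq:pidef} on the new quadruple equals $\pm$ the trace of the corresponding word in $M_2,M_3,M_4$ and $M_\infty$, with the sign determined by how many times the first slot occurs. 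I would then substitute $M_\infty=(M_4M_3M_2M_1)^{-1}$ from \eqref{eq:cyclic} and reduce the resulting traces of words of length up to seven in $M_1,\dots,M_4$ down to the fifteen coordinates $p_i,p_{ij},p_{ijk}$ by iterating the skein relation \eqref{eq:rule1}, as in the proof of Theorem \ref{thm:6indep}. The two structurally clearest entries are $\Tr(\text{new }M_1)=-\Tr M_\infty=-p_\infty$ and, since $M_4M_3M_2M_\infty=(M_4M_3M_2)M_1^{-1}(M_4M_3M_2)^{-1}$ is conjugate to $M_1^{-1}$, $\Tr(\text{new }M_4M_3M_2M_1)=-\Tr M_1=-p_1$, matching the first and fifth entries of \eqref{eq:P1infty}; the remaining thirteen follow from the same bookkeeping. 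As a final consistency check, and to extend the formula off the locus $p_1\ne\pm2$ by density, one verifies directly that $P_{1\infty}$ carries the ideal $I=\langle f_1,\dots,f_{15}\rangle$ to itself, just as was done for the $\sigma_i$ in the proof of Lemma \ref{lm:u2}.

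The main obstacle is this last, $P_{1\infty}$, computation: fixing the overall sign and deciding whether $M_\infty$ or $M_\infty^{-1}$ enters so that the transformed cyclic relation still holds in $\mathrm{SL}_2(\complessi)$, and then controlling the combinatorial growth of words under repeated use of \eqref{eq:rule1}, where many intermediate six- and seven-letter traces arise and an ordering or sign slip is easy to make. Conceptually it is routine, and it is naturally carried out by machine (cf.\ \cite{ouralgorithms}); the invariance of $I$ under $P_{1\infty}$ is the one genuine confirmation that the stated formula is correct.
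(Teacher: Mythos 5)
Your strategy is the same one the paper uses (its proof is literally the one-line ``straightforward computation relying on the definition of the co-adjoint coordinates and the skein relation''), and your handling of $P_{13},P_{23},P_{34}$ is fine: you verify that the matrix maps of the Lemma coincide with the braid words $\sigma_2\sigma_1^{-1}\sigma_2^{-1}$, etc., acting through \eqref{eq:braidaction} (your inverses of $\sigma_1,\sigma_2$ and the explicit check for $P_{13}$ are correct), and then Lemma \ref{lm:u2} turns this into the coordinate statements \eqref{eq:P13}--\eqref{eq:P34}. That part needs no further comment.

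The gap is in the sign rule you announce for $P_{1\infty}$. You read the Lemma as ``$(-M_\infty,M_2,M_3,M_4)$ up to \emph{simultaneous} conjugation by $C_1$'' and assert that each of the fifteen coordinates of the image equals $\pm$ the trace of the corresponding word in $M_2,M_3,M_4,M_\infty$, ``with the sign determined by how many times the first slot occurs.'' Carried out literally, that rule gives, for instance, new $p_{431}=-\Tr(M_4M_3M_\infty)=-\Tr\bigl((M_2M_1)^{-1}\bigr)=-p_{21}$ (only cyclicity and \eqref{eq:cyclic} are needed here) and new $p_{21}=-\Tr(M_2M_\infty)$, whereas \eqref{eq:P1infty} has $+p_{21}$ in the $p_{431}$ slot and $p_2p_\infty-p_{432}p_{21}+p_{43}p_1-p_{431}$ in the $p_{21}$ slot, which the skein relation \eqref{eq:rule1} identifies with $+\Tr(M_2M_\infty)$; a numerical quadruple in $SL_2(\mathbb Z)$ confirms the mismatch. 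In the printed formula the minus sign appears only in the $p_1$ and $p_\infty$ entries and does not propagate into any mixed trace, so no assignment of the form $(\pm M_\infty,M_2,M_3,M_4)$ modulo one overall conjugation reproduces \eqref{eq:P1infty} as a polynomial map: the minus sign and the asymmetric conjugation by $C_1$ in the Lemma cannot simply be absorbed the way you propose. The two entries you actually verify ($-p_\infty$ and $-p_1$) are precisely the ones insensitive to this, so the step ``the remaining thirteen follow from the same bookkeeping'' fails as stated. The consistency check you mention at the end (invariance of the ideal $I$, or direct evaluation of the traces on an explicit quadruple) is therefore not an optional confirmation but the place where the sign convention must be fixed entry by entry before \eqref{eq:P1infty} can be claimed.
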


\proof This is a straightforward computation relying on the definition of the co-adjoint coordinates and the skein relation. \endproof

We wish to extend the class of transformations satisfying this property by adding to  $<P_{1 3},P_{2 3},P_{3 4},P_{1 \infty}>$ the following 
set of transformations that also map finite orbits to finite orbits with the same number of points (see Theorem {thm:symm-gr}):
\begin{itemize}
\item[(i)] Sign flips, or transformations that change signs to matrices $M_i$ for $i=1,\dots,4$, corresponding to  the so-called Schlesinger transformations introduced by Jimbo-Miwa in \cite{JIMBO1981407}:
\begin{align}
\text{sign}_{(\epsilon_1,\epsilon_2,\epsilon_3,\epsilon_4)} : (M_{1},M_{2},M_{3},M_{4},M_\infty) \mapsto (&\epsilon_1 M_{1}, \epsilon_2 M_{2},\epsilon_3 M_{3},\epsilon_4 M_{4},\nonumber\\&\epsilon_1\epsilon_2\epsilon_3\epsilon_4(M_4 M_3 M_2 M_1)^{-1})
\end{align}
where $\epsilon_i = \pm 1$ for $i=1,\dots,4$.
\item[(ii)] Permutations of the matrices $M_i$ for $i=1,\dots,4$ generated by:
\begin{equation}
\begin{split}\label{eq:g2perm12}
\pi_{(1 2) (34)} : (M_{1},M_{2},M_{3},M_{4},M_\infty) \mapsto ( M_{2}^{-1},M_{1}^{-1},M_{4}^{-1}, M_{3}^{-1}, M_{2}M_{1}M_{4}M_{3} ),
\end{split}
\end{equation}
\begin{equation}
\begin{split}\label{eq:g2perm1234}
\pi_{(1 2 3 4)} : (M_{1},M_{2},M_{3},M_{4},M_\infty) \mapsto ( M_{4} ,M_{1} ,M_{2} , M_{3} ,(M_{3}M_{2}M_{1}M_{4})^{-1} ).
\end{split}
\end{equation}
\end{itemize} 

The following two results give the action of the sign flips and permutations on the co-adjoint coordinates and can be proved by straightforward computations:

\begin{proposition} The sign flips are invertible maps generated by the four basic elements:
\begin{align}\label{eq:g2sign1}\rm
\rm{sign}_1: =\rm{sign}_{(-1,1,1,1)},\quad 
\rm{sign}_2 :=\rm{sign}_{(1,-1,1,1)} ,\\
\rm{sign}_3 :=\rm{sign}_{(1,1,-1,1)} \quad
\rm{sign}_4 :=\rm{sign}_{(1,1,1,-1)}\nonumber
\end{align}
that act as follow on the co-adjoint coordinates \eqref{15tuple} as in \eqref{eq:sign1}, \eqref{eq:sign2}, \eqref{eq:sign3}, \eqref{eq:sign4}.\end{proposition}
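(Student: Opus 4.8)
The final statement to prove is the Proposition giving the action of the sign flips on the co-adjoint coordinates, namely that $\mathrm{sign}_1,\dots,\mathrm{sign}_4$ act as in \eqref{eq:sign1}--\eqref{eq:sign4}. The plan is to compute directly how each elementary sign flip $\mathrm{sign}_{(\epsilon_1,\epsilon_2,\epsilon_3,\epsilon_4)}$ transforms each of the $15$ generators $p_i$, $p_{ij}$, $p_{ijk}$ and $p_\infty$ of the polynomial ring \eqref{eq:polyring}, using only the definition \eqref{eq:pidef} of the co-adjoint coordinates as traces of products of the monodromy matrices $M_1,\dots,M_4$ and of $M_\infty = (M_4M_3M_2M_1)^{-1}$.

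First I would observe that since $\mathrm{sign}_{(\epsilon_1,\epsilon_2,\epsilon_3,\epsilon_4)}$ replaces $M_i$ by $\epsilon_i M_i$ with $\epsilon_i \in \{\pm 1\}$, and trace is multilinear under scalar multiplication, each generator simply picks up the product of the relevant signs: $\Tr M_i \mapsto \epsilon_i \Tr M_i$, $\Tr M_iM_j \mapsto \epsilon_i\epsilon_j \Tr M_iM_j$, $\Tr M_iM_jM_k \mapsto \epsilon_i\epsilon_j\epsilon_k \Tr M_iM_jM_k$. For $p_\infty = \Tr M_4M_3M_2M_1$ we get the factor $\epsilon_1\epsilon_2\epsilon_3\epsilon_4$, which is consistent with the stipulation in the definition of $\mathrm{sign}_{(\epsilon_1,\epsilon_2,\epsilon_3,\epsilon_4)}$ that $M_\infty \mapsto \epsilon_1\epsilon_2\epsilon_3\epsilon_4 (M_4M_3M_2M_1)^{-1}$. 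Specializing to the four basic generators: for $\mathrm{sign}_1 = \mathrm{sign}_{(-1,1,1,1)}$ the sign factor attached to a generator is $(-1)$ to the number of times the index $1$ occurs, which flips exactly $p_1, p_\infty, p_{21}, p_{31}, p_{41}, p_{321}, p_{431}, p_{421}$ and fixes $p_2,p_3,p_4,p_{32},p_{42},p_{43},p_{432}$ --- precisely \eqref{eq:sign1}. The same bookkeeping for indices $2$, $3$, $4$ gives \eqref{eq:sign2}, \eqref{eq:sign3}, \eqref{eq:sign4}.

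Next I would check that these maps are well defined on $\mathcal A$, i.e.\ that they preserve the ideal $I = \langle f_1,\dots,f_{15}\rangle$ of \eqref{eq:f321}--\eqref{eq:f11}: since each $f_s$ is a polynomial relation among traces of products of $M_1,\dots,M_4$ which holds identically on $\mathcal U$, and the transformed coordinates are exactly the co-adjoint coordinates of the transformed quadruple $(\epsilon_1 M_1,\dots,\epsilon_4 M_4) \in \mathcal U$, the relations are automatically preserved; concretely $f_s$ is mapped to $\pm f_s$ (the sign being determined by the index parity of each monomial, which is constant within $f_s$ because all monomials of a given $f_s$ carry the same total multiplicity of each index modulo $2$). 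Finally, invertibility is immediate: each $\mathrm{sign}_i$ is an involution, $\mathrm{sign}_i^2 = \mathrm{id}$, so the group they generate is a quotient of $(\mathbb Z/2\mathbb Z)^4$, and the four generators are clearly distinct, hence it is $(\mathbb Z/2\mathbb Z)^4$ (the claim in the statement is only that the maps are invertible and generated by these four, which follows at once).

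The main obstacle is not conceptual but purely clerical: one must verify that the sign pattern in each of \eqref{eq:sign1}--\eqref{eq:sign4} matches the index-parity count for every one of the $15$ coordinate slots, and in particular double-check the $p_\infty$ slot (where all four basic sign flips produce a factor $-1$, since the index $i$ occurs exactly once in the word $M_4M_3M_2M_1$). This is a finite check that can be carried out by inspection, exactly as asserted in the sentence preceding the Proposition (``can be proved by straightforward computations''), so no further argument is needed beyond recording it.
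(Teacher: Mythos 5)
Your proof is correct and is exactly the ``straightforward computation'' the paper invokes: by multilinearity of the trace under scalar multiplication, each coordinate picks up $(-1)$ to the number of occurrences of the flipped index in the corresponding word, and checking this parity slot by slot gives the stated formulas, with well-definedness on $\mathcal A$ and invertibility following as you say. One remark: your bookkeeping gives $-p_{421}$ in the last slot of $\mathrm{sign}_4$, whereas the printed \eqref{eq:sign4} shows $p_{421}$ (together with a stray repeated minus before $p_{431}$); that is a typographical slip in the paper, and the sign produced by your computation is the correct one.
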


\begin{proposition} The generators $\pi_{(1 2)(34)}$ and $\pi_{(1 2 3 4)}$ act  on the co-adjoint coordinates \eqref{15tuple} as in \eqref{eq:pgen12} and \eqref{eq:pgen1234}.
\end{proposition}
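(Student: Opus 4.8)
The plan is to prove both identities by direct substitution: each of the fifteen target coordinates is, by Definition \eqref{eq:pidef}, the trace of a word in $M_1,\dots,M_4$, so I would replace every $M_s$ by its image under the matrix-level rule \eqref{eq:g2perm12} (respectively \eqref{eq:g2perm1234}) and simplify the resulting trace. The only tools needed are three elementary facts: the cyclicity of the trace, the identity $\Tr M = \Tr M^{-1}$ valid for every $M\in\text{SL}_2(\complessi)$, and the anti-homomorphism property $(M_{a}M_{b}\cdots M_{z})^{-1}=M_{z}^{-1}\cdots M_{b}^{-1}M_{a}^{-1}$. Notably, in contrast with the conjugation-type generators $P_{13},P_{23},P_{34},P_{1\infty}$ whose action on co-adjoint coordinates required iterating the skein relation \eqref{eq:rule1}, the permutation generators merely relabel (and, for $\pi_{(1 2)(34)}$, invert) the matrices, so no skein relation is needed here.

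Concretely, for $\pi_{(1 2)(34)}$ I would set $(M_1',M_2',M_3',M_4')=(M_2^{-1},M_1^{-1},M_4^{-1},M_3^{-1})$ and read off the single-matrix traces immediately from $\Tr M_s^{-1}=\Tr M_s$, giving $(p_1',p_2',p_3',p_4')=(p_2,p_1,p_4,p_3)$. For a two-matrix entry such as $p_{31}'=\Tr(M_3'M_1')=\Tr(M_4^{-1}M_2^{-1})=\Tr\big((M_2M_4)^{-1}\big)=\Tr(M_4M_2)=p_{42}$, and similarly for the remaining pairs; here one uses the inverse-trace identity followed by a single cyclic rotation to reach the standard descending-index form. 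The entry $p_\infty'$ follows from the cyclic relation \eqref{eq:cyclic}: since $M_4'M_3'M_2'M_1'=(M_2M_1M_4M_3)^{-1}$, one gets $p_\infty'=\Tr(M_2M_1M_4M_3)=\Tr(M_4M_3M_2M_1)=p_\infty$ after a cyclic rotation, consistent with the declared image of $M_\infty$. The computation for $\pi_{(1 2 3 4)}$, with $(M_1',M_2',M_3',M_4')=(M_4,M_1,M_2,M_3)$, is even lighter since no inversions of the $M_s$ occur.

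The only point requiring a little care---and the place I expect all the bookkeeping to live---is the four triple-product entries $p_{321},p_{432},p_{431},p_{421}$, because $\Tr(M_aM_bM_c)$ is invariant only under \emph{cyclic} rotations of $(a,b,c)$, not arbitrary permutations. I would verify each one separately and check that the word produced by the transformation is indeed a cyclic rotation of a standard descending triple. For instance, under $\pi_{(1 2)(34)}$ one has $p_{321}'=\Tr(M_4^{-1}M_1^{-1}M_2^{-1})=\Tr(M_2M_1M_4)=\Tr(M_4M_2M_1)=p_{421}$, and the three remaining triples reduce analogously; under $\pi_{(1 2 3 4)}$ the corresponding checks are $p_{432}'=\Tr(M_3M_2M_1)=p_{321}$ and $p_{431}'=\Tr(M_3M_2M_4)=\Tr(M_4M_3M_2)=p_{432}$, and so on. Since in every case inversion-reversal combined with one cyclic rotation lands exactly on a descending triple, the full Fricke relation for three matrices is never invoked, and the two stated images \eqref{eq:pgen12} and \eqref{eq:pgen1234} follow entry by entry.
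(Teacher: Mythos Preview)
Your proposal is correct and matches the paper's own approach: the paper simply states that the result ``can be proved by straightforward computations'' without further detail, and your entry-by-entry verification via trace cyclicity and $\Tr M=\Tr M^{-1}$ for $M\in\text{SL}_2(\complessi)$ is exactly that straightforward computation spelled out.
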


Finally we characterise the group $G$ of symmetries  of  \hmg2:

\begin{definition}\label{df:symmetry}
A \textit{symmetry} for \hmg2 is an invertible map $\Phi : \hhmg2 \mapsto \hhmg2$ such that given an element $p\in\hhmg2$ and its orbit $\mathcal O(p)$, the following is true:
\begin{equation}\label{df:symm}
|{\mathcal O}(\Phi (p)) |= |{\mathcal O}(p) |.
\end{equation}
\end{definition} 

\begin{theorem}\label{thm:symm-gr} The group 
\begin{equation}
G:=\langle P_{1 3},P_{2 3},P_{3 4},P_{1 \infty},\rm{sign}_1,\dots,\rm{sign}_4,\pi_{(1 2)(34)}\pi_{(1 2 3 4)}\rangle
\label{eq:groupG}
\end{equation}
 is a group of symmetries for  \hmg2.
\end{theorem}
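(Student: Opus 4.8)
The plan is to prove the stronger statement that each generator of $G$ listed in \eqref{eq:groupG} \emph{normalises the $P_4$-action} on $\mathcal A=\widehat{\mathcal M}_{\mathcal G_2}$: for such a generator $g$ and any $\beta\in P_4$ there should be $\beta'\in P_4$ with $g\circ\beta=\beta'\circ g$ as self-maps of $\mathcal A$. (Each $g$ is an honest bijection of $\mathcal A$ since it is induced by a transformation of the monodromy tuple $(M_1,\dots,M_4)$; alternatively this can be read off the explicit polynomial formulas, as in the proof of Lemma \ref{lm:u2}.) Once the normalisation is known for the generators it holds for all of $G$ — the elements normalising a fixed subgroup form a subgroup — so $g(\mathcal O_{P_4}(p))=\mathcal O_{P_4}(g(p))$ for every $p\in\mathcal A$, and bijectivity of $g$ gives $|\mathcal O_{P_4}(g(p))|=|\mathcal O_{P_4}(p)|$, which is Definition \ref{df:symmetry}. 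Checking only generators is also enough because symmetries form a group: composition is immediate and, for the inverse, $|\mathcal O(\Phi^{-1}(p))|=|\mathcal O(\Phi(\Phi^{-1}(p)))|=|\mathcal O(p)|$.

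First I would dispose of $P_{13},P_{23},P_{34}$. By the Lemma above (from \cite{MazzoccoCanonical}) and its Corollary, as transformations of $\mathcal A$ these coincide with the braid words \eqref{eq:P13}, \eqref{eq:P23}, \eqref{eq:P34}, so they lie in the image of $B_4$ acting on $\mathcal A$ through \eqref{eq:braidpaction}. Since the pure braid group $P_4$ is normal in $B_4$ — being the kernel of the canonical surjection $B_4\to S_4$ — conjugation by any element of $B_4$ carries $P_4$ to $P_4$, so $P_{13},P_{23},P_{34}$ normalise the $P_4$-action.

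Next come the sign flips and the permutations. For each $\mathrm{sign}_i$ one checks directly from \eqref{eq:braidaction} that $\mathrm{sign}_i\circ\sigma_j=\sigma_j\circ\mathrm{sign}_{\tau_j(i)}$ with $\tau_j=(j,j+1)\in S_4$: under $\sigma_j$ the matrix in slot $i$ is moved to slot $\tau_j(i)$ up to conjugation, so scaling it by $-1$ before or after $\sigma_j$ has the same effect, and the induced sign on $M_4M_3M_2M_1$ is consistent. Iterating, $\beta^{-1}\mathrm{sign}_i\beta=\mathrm{sign}_{\bar\pi(i)}$ for $\beta\in B_4$ with image $\bar\pi\in S_4$; for $\beta\in P_4$ (so $\bar\pi=\mathrm{id}$) each $\mathrm{sign}_i$ commutes with the whole $P_4$-action, and being an involution is a bijection. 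For $\pi_{(1\,2)(3\,4)}$ and $\pi_{(1\,2\,3\,4)}$ I would verify on $\sigma_1,\sigma_2,\sigma_3$ that $\pi\circ\sigma_j\circ\pi^{-1}$ equals the braid obtained by relabelling the strands by $\pi$, as a finite computation combining \eqref{eq:braidpaction} with \eqref{eq:pgen12} and \eqref{eq:pgen1234}; hence the permutations normalise $B_4$, and a fortiori $P_4$, acting on $\mathcal A$. (Both sign flips and permutations of the singular points also sit among Kimura's bi-rational symmetries of $\mathcal G_2$, which by construction preserve the number of branches of algebraic solutions.)

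Finally there is $P_{1\infty}$, where I expect the real difficulty to lie. It is not realised by a braid — its definition mixes conjugation by a diagonaliser of $M_1$ with $M_1\mapsto -M_\infty$ — so the normalisation cannot come from $P_4\triangleleft B_4$. The cleanest route is to use that $P_{1\infty}$ implements the transposition $(1\,\infty)$ of the five marked points and generates, together with $P_{13},P_{23},P_{34}$, Kimura's $S_5$-action of bi-rational transformations of $\mathcal G_2$ \cite{kimura1990,iwasaki1991gauss,JLM:253431,MazzoccoCanonical}; these send algebraic solutions to algebraic solutions with the same number of branches, and since the $P_4$-action on $\mathcal A$ is precisely the nonlinear monodromy of $\mathcal G_2$ this says exactly that $|\mathcal O_{P_4}(P_{1\infty}(p))|=|\mathcal O_{P_4}(p)|$. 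For a self-contained argument one would instead show, from the explicit formula \eqref{eq:P1infty} and the presentation \eqref{puregensrelations}, that $P_{1\infty}\,\beta_{ij}\,P_{1\infty}^{-1}$ is again a word in the $\beta_{rs}$; pinning down this conjugation — via a topological description of $(1\,\infty)$ inside the mapping class group of the five-punctured sphere, or via a direct co-adjoint-coordinate computation — is the main obstacle, the other steps being routine.
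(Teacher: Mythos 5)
Your proposal is correct and its core is the same as the paper's: the sign flips and the two permutations are handled in the paper exactly as you suggest, by exhibiting intertwining relations with the braid generators (e.g.\ $\sigma_1\,\mathrm{sign}_2=\mathrm{sign}_1\,\sigma_1$, $\sigma_3\,\pi_{(1\,2\,3\,4)}=\pi_{(1\,2\,3\,4)}\,\sigma_2$), and $P_{1\infty}$ is justified, as in your fallback, by the fact that these maps realise Kimura's birational $S_5$-action on the Garnier system (via \cite{kimura1990}, \cite{MazzoccoCanonical}), which sends algebraic solutions to algebraic solutions with the same number of branches and hence finite orbits to finite orbits of the same cardinality. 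The differences are presentational but worth noting. The paper disposes of all four generators $P_{13},P_{23},P_{34},P_{1\infty}$ at one stroke ``by construction'' through the Kimura argument, and leaves the step ``intertwining relations imply equal orbit size'' implicit; you instead treat $P_{13},P_{23},P_{34}$ more elementarily, observing that by \eqref{eq:P13}--\eqref{eq:P34} they are braid words so that normality of $P_4$ in $B_4$ already gives the claim, reserving the birational-symmetry argument only for $P_{1\infty}$, and you spell out the normalisation-of-the-$P_4$-action logic that makes the cardinality statement of Definition \ref{df:symmetry} follow. This buys a more self-contained treatment of three of the four $P$-generators and a cleaner articulation of why the finite verifications suffice, at the cost of carrying out the same explicit checks the paper lists. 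One small correction: conjugating $\sigma_j$ by $\pi_{(1\,2)(3\,4)}$ or $\pi_{(1\,2\,3\,4)}$ does not always produce a merely relabelled generator (the paper's relations give, for instance, $\sigma_1\,\pi_{(1\,2)(3\,4)}=\pi_{(1\,2)(3\,4)}\,\sigma_1^{-1}$), but this is harmless for your argument, since all you need is that the conjugate is a braid word carrying the image of $P_4$ onto itself.
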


\proof The statement is true for the subgroup $\langle P_{1 3},P_{2 3},P_{3 4},P_{1 \infty}\rangle$ by construction. We now prove that  each generator $\Phi$ in $\langle \rm{sign}_1,\dots,\rm{sign}_4,\pi_{(1 2)(34)}\pi_{(1 2 3 4)}\rangle$ satisfies \eqref{df:symm}. It is straightforward to prove the following relations:
\begin{eqnarray}
&&
\sigma_1\rm{sign}_2 =\rm{sign}_1 \sigma_1, \quad
\sigma_1\rm{sign}_3 =\rm{sign}_3 \sigma_1, \quad
\sigma_1\rm{sign}_4 =\rm{sign}_4 \sigma_1, \nonumber\\
&&
\sigma_2\rm{sign}_1 =\rm{sign}_1 \sigma_2, \quad
\sigma_2\rm{sign}_2 =\rm{sign}_3 \sigma_2, \quad
\sigma_2\rm{sign}_3 =\rm{sign}_2 \sigma_2, \nonumber\\
&&
\sigma_2\rm{sign}_4 =\rm{sign}_4 \sigma_2, \quad
\sigma_3\rm{sign}_1 =\rm{sign}_1 \sigma_3, \quad
\sigma_3\rm{sign}_2 =\rm{sign}_2 \sigma_3, \nonumber\\
&&
\sigma_3\rm{sign}_3 =\rm{sign}_4 \sigma_3, \quad
\sigma_3\rm{sign}_4 =\rm{sign}_3 \sigma_3. \nonumber
\end{eqnarray}
so that all sign flips are indeed symmetries. Regarding the permutations, it is straightforward to prove the following relations:
\begin{eqnarray}
&&
\sigma_2 \pi_{(1 2 3 4)} = \pi_{(1 2 3 4)} \sigma_1,\quad
\sigma_1 \pi_{(1 2)(34)} = \pi_{(1 2)(34)} \sigma_1^{-1},\nonumber \\
&&
\sigma_2 \pi_{(1 2)(34)} = \pi_{(1 2)(34)} (12 34)^3 \sigma_2 \sigma_3,\quad
\sigma_3 \pi_{(1 2)(34)} = \pi_{(1 2)(34)} \sigma_3^{-1},\nonumber  \\
&&
\sigma_1 \pi_{(1 2 3 4)} = \pi_{(1 2 3 4)} \pi_{(1 2 3 4)} \sigma_2^{-1}\sigma_1^{-1},\quad
\sigma_2 \pi_{(1 2 3 4)} = \pi_{(1 2 3 4)} \sigma_1,\nonumber \\
&&
\sigma_3 \pi_{(1 2 3 4)} = \pi_{(1 2 3 4)} \sigma_2.\nonumber 
\end{eqnarray}
This conclude the proof.
\endproof

\end{appendix}


\end{document}